\documentclass[preprint,11pt]{elsarticle}

\usepackage[T1]{fontenc}
\usepackage[utf8]{inputenc}
\usepackage{lmodern}
\usepackage{amsmath,amssymb,amsthm}
\usepackage[a4paper,left=2.35cm,right=2.35cm,top=2.25cm,bottom=2.45cm]{geometry}
\usepackage{enumitem}
\usepackage{aliascnt}
\usepackage{etoolbox}
\usepackage[colorlinks=true,linkcolor=blue,citecolor=blue,urlcolor=blue]{hyperref}
\hypersetup{pdfauthor={Bohan Yang}}
\usepackage[nameinlink,capitalize]{cleveref}

\makeatletter
\patchcmd{\pprintMaketitle}{\normalsize\elsauthors\par\vskip10pt
      \footnotesize\itshape\elsaddress\par\vskip36pt}{\normalsize\elsauthors\par\vskip2pt
      \footnotesize\itshape\elsaddress\par\vskip10pt}{}{}
\patchcmd{\pprintMaketitle}{%
    \hrule\vskip12pt
    \ifvoid\absbox\else\unvbox\absbox\par\vskip10pt\fi
    \ifvoid\keybox\else\unvbox\keybox\par\vskip10pt\fi
    \hrule\vskip12pt
}{%
    \vskip12pt
    \ifvoid\absbox\else\unvbox\absbox\par\vskip10pt\fi
    \ifvoid\keybox\else\unvbox\keybox\par\vskip10pt\fi
    \vskip12pt
}{}{\PackageError{SDW}{Unable to remove the abstract rules}{}}

\renewenvironment{abstract}{%
  \global\setbox\absbox=\vbox\bgroup
  \hsize=\textwidth
  \hbox to \textwidth\bgroup\hfil
  \begin{minipage}{0.82\textwidth}
  \small
  \begin{center}\textbf{Abstract}\end{center}
  \vspace{-0.4\baselineskip}
  \noindent\ignorespaces
}{%
  \par\end{minipage}\hfil\egroup
  \vspace{0.7\baselineskip}
  \egroup
}

\makeatother

\numberwithin{equation}{section}

\newcommand{\R}{\mathbb R}
\newcommand{\Z}{\mathbb Z}
\newcommand{\N}{\mathbb N}
\newcommand{\SL}{\operatorname{SL}}
\newcommand{\diag}{\operatorname{diag}}
\newcommand{\Sing}{\operatorname{Sing}}
\newcommand{\cL}{\mathcal L}
\newcommand{\cK}{\mathcal K}
\newcommand{\cT}{\mathcal T}
\newcommand{\cF}{\mathcal F}
\newcommand{\cA}{\mathcal A}
\newcommand{\cB}{\mathcal B}
\newcommand{\eps}{\varepsilon}
\newcommand{\wh}{\widehat}
\newcommand{\abs}[1]{\lvert #1\rvert}
\newcommand{\norm}[1]{\lVert #1\rVert}
\newcommand{\card}[1]{\# #1}
\DeclareMathOperator{\dist}{dist}
\DeclareMathOperator{\diam}{diam}
\DeclareMathOperator{\vol}{vol}

\theoremstyle{plain}
\newtheorem{theorem}{Theorem}[section]
\newaliascnt{lemma}{theorem}
\newtheorem{lemma}[lemma]{Lemma}
\aliascntresetthe{lemma}
\newaliascnt{proposition}{theorem}
\newtheorem{proposition}[proposition]{Proposition}
\aliascntresetthe{proposition}
\newaliascnt{corollary}{theorem}
\newtheorem{corollary}[corollary]{Corollary}
\aliascntresetthe{corollary}
\theoremstyle{remark}
\newtheorem*{remark}{Remark}

\begin{document}
\raggedbottom

\begin{frontmatter}

\title{Slowly Divergent Trajectories for Weighted Singular Vectors in \texorpdfstring{$\mathbb R^2$}{R2}}

\author[simis]{Bohan Yang\corref{corresponding-author}}
\ead{bhyang@simis.cn}
\cortext[corresponding-author]{Corresponding author.}
\affiliation[simis]{%
  organization={Shanghai Institute for Mathematics and Interdisciplinary Sciences},
  city={Shanghai},
  postcode={200433},
  country={P. R. China}}

\begin{abstract}
Let $w=(w_1,w_2)$ satisfy $w_1>w_2>0$ and $w_1+w_2=1$.  For every
function $f:[0,\infty)\to(0,\infty)$ with $f(t)\to0$, we prove that the
set of $w$-singular vectors whose weighted shortest-vector function
satisfies $W_x(t)\ge\log f(t)$ for all sufficiently large $t$
has Hausdorff dimension
$s_w=2-(1+w_1)^{-1}$, equal to the full Hausdorff dimension of
$\operatorname{Sing}_w(2)$.  For every $0<\nu<1$ and $\mu>0$, a
separate power schedule gives Hausdorff dimension $s_w$ for the
weighted uniform approximation set with rate
$Q^{-1}\exp\!\bigl(-\mu(\log Q)^\nu\bigr)$, whereas the lower-envelope
theorem gives the same dimension for its complement in
$\operatorname{Sing}_w(2)$.  We give a direct proof of the
lower-envelope theorem by adapting the self-affine construction of
Liao--Shi--Solan--Tamam to a variable sequence of return times and
using an empty-denominator-window argument to control intermediate
cusp excursions without further pruning the tree.
\end{abstract}

\end{frontmatter}

\section{Introduction}

Fix
\begin{equation}\label{eq:weight}
w=(w_1,w_2),\qquad w_1>w_2>0,\qquad w_1+w_2=1.
\end{equation}
Here \(\N=\{1,2,\ldots\}\).  Following Liao, Shi, Solan, and
Tamam (LSST) \cite{LSST}, a vector \(x=(x_1,x_2)\in\R^2\) is
\(w\)-singular if, for every \(\eps>0\) and all sufficiently large
\(Q\), there exists \((p,q)\in\Z^2\times\N\) such that
\begin{equation}
 |qx_i-p_i|<\eps^{w_i}Q^{-w_i}\quad(i=1,2),\qquad q<Q.
\end{equation}
Write \(\Sing_w(2)\) for the set of such vectors.  Let
\(\cL_3=\SL_3(\R)/\SL_3(\Z)\), put
\[
 a_t=\diag(e^{w_1t},e^{w_2t},e^{-t}),
 \qquad
 u_x=
 \begin{pmatrix}1&0&-x_1\\0&1&-x_2\\0&0&1\end{pmatrix},
\]
and define, for \(y=(y_1,y_2,y_3)\in\R^3\) and
\(\Lambda\in\cL_3\),
\[
 \norm{y}_w
 =\max\{\abs{y_1}^{1/w_1},\abs{y_2}^{1/w_2},\abs{y_3}\},
 \qquad
 \norm{\Lambda}_w
 =\min_{y\in\Lambda\setminus\{0\}}\norm{y}_w.
\]
The double bars in \(\norm{\cdot}_w\) are only notation: this is an
anisotropic size functional adapted to \(a_t\), not a norm.
Throughout the paper, \(\norm{\cdot}\) denotes the Euclidean norm.
Unless otherwise stated, \(A\ll B\) means \(A\le CB\) for a constant
\(C>0\) depending only on the fixed weight \(w\).
The weighted shortest-vector function of \(x\) is
\begin{equation}\label{eq:W}
 W_x(t)=\log\norm{a_tu_x\Z^3}_w.
\end{equation}
Equivalently,
\[
 e^{W_x(t)}
 =\min_{(p,q)\in\Z^2\times\N}
 \max\left\{
 e^t\abs{qx_1-p_1}^{1/w_1},
 e^t\abs{qx_2-p_2}^{1/w_2},
 e^{-t}q
 \right\}.
\]
Dani's correspondence \cite{Dani85} gives
\[
 x\in\Sing_w(2)
 \Longleftrightarrow
 a_tu_x\Z^3\text{ diverges in }\cL_3
 \Longleftrightarrow
 W_x(t)\longrightarrow-\infty.
\]
Thus a lower bound on \(W_x\) prescribes how slowly a divergent
trajectory enters the cusp.

Write \(\Sing(d)\) for the unweighted singular set in \(\R^d\).
The first exact dimension result beyond dimension one is due to Cheung,
who proved \cite[Theorem~1.1]{Cheung11}
\[
 \dim_H\Sing(2)=\frac43.
\]
In the same work, he constructed divergent trajectories with arbitrarily
prescribed slow escape, answering a question of Starkov
\cite[p.~213]{Starkov}; see \cite[Theorem~1.4]{Cheung11}.
This dynamical result is related to his earlier construction of arbitrarily
slowly divergent Teichm\"uller geodesics in moduli space \cite{Cheung04}.
Cheung and Chevallier subsequently extended the exact dimension formula to
every \(d\ge2\) \cite[Theorem~1.1]{CheungChevallier}:
\[
 \dim_H\Sing(d)=\frac{d^2}{d+1}.
\]
Liao, Shi, Solan, and Tamam adapted Cheung's construction to planar
weighted approximation and proved \cite[Theorem~1.1]{LSST}
\begin{equation}\label{eq:LSST-dimension}
 \dim_H\Sing_w(2)=s_w:=2-\frac1{1+w_1}.
\end{equation}
More generally, let \(d\ge2\), and let
\(\boldsymbol\omega=(\omega_1,\ldots,\omega_d)\) be an ordered positive
weight.  Write \(\Sing_{\boldsymbol\omega}(d)\) for the corresponding
weighted singular set in \(\R^d\).  The results of Kim--Park
\cite[Theorem~1.1]{KimPark} and
Aggarwal--Ghosh
\cite[Theorem~1.3 and Remark~1.4]{AggarwalGhosh} give
\[
 d-\frac1{1+\omega_1}
 \le \dim_H\Sing_{\boldsymbol\omega}(d)
 \le \dim_P\Sing_{\boldsymbol\omega}(d)
 \le d-\frac{d\omega_d}{1+\omega_1}.
\]
Here \(\omega_1\ge\cdots\ge\omega_d>0\),
\(\sum_i\omega_i=1\), and \(\dim_P\) denotes packing dimension.

These dimension results impose no rate constraint.  Kleinbock,
Moshchevitin, Warren, and Weiss (KMWW) developed the weighted uniform
approximation framework used below and studied existence, intersection
properties, and attainable rates
\cite{KleinbockMoshchevitinWarrenWeiss}.  For the standard unweighted
\(m\times n\) matrix flow, Das, Fishman, Simmons, and Urba\'nski
(DFSU) proved the corresponding arbitrary lower-envelope result
\cite[Theorem~3.12]{DFSU}.  In the relevant \(2\times1\) case, the two
expanding coordinates have the same rate.  By contrast, under \(a_t\)
they expand at the unequal rates \(w_1\) and \(w_2\).  Thus, when
\(w_1>w_2\), \(a_t\) is not a scalar time change of the standard
\(2\times1\) matrix flow, and the DFSU theorem does not directly imply
\cref{thm:main}.  Our proof is direct and does not invoke the DFSU
variational principle.  Solan's general-flow variational principle
gives dimension estimates for divergent sets under arbitrary diagonal
flows \cite[Corollary~2.34 and Example~3.1]{SolanGeneralFlow}, but not
the lower-envelope dimension of the two-dimensional family
\(\{u_x\Z^3:x\in\R^2\}\).  To our knowledge, it was not previously
known whether imposing an arbitrary eventual lower envelope preserves
the full Hausdorff dimension of \(\Sing_w(2)\) when \(w_1>w_2\).

For \(f:[0,\infty)\to(0,\infty)\) with \(f(t)\to0\), let
\[
 E_{f,w}
 =\left\{x\in\Sing_w(2):
 W_x(t)\ge\log f(t)
 \text{ for all sufficiently large }t
 \right\}.
\]

\begin{theorem}\label{thm:main}
For every such \(f\),
\[
 \dim_H E_{f,w}=s_w.
\]
\end{theorem}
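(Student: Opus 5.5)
The upper bound is immediate. Since \(E_{f,w}\subset\Sing_w(2)\), \eqref{eq:LSST-dimension} gives \(\dim_H E_{f,w}\le s_w\). The whole content is the lower bound \(\dim_H E_{f,w}\ge s_w\).

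Two reductions simplify the target. First, we may replace \(f\) by a smaller function: shrinking \(f\) only enlarges \(E_{f,w}\). So we may assume \(f\) is nonincreasing, tends to \(0\) as slowly as we like, and \(\log f\) is Lipschitz with small constant. Second, it suffices to prove \(\dim_H E_{f,w}\ge s_w-\eta\) for each fixed \(\eta>0\). For this we construct a Cantor set \(K\subset E_{f,w}\) of dimension at least \(s_w-\eta\).

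\textbf{The tree.} We follow the self-affine construction of LSST. Its nodes are primitive integer vectors \(v=(p,q)\) (best-approximation denominators). The children of \(v\) are vectors \(v'\) with \(q'\approx e^{T}q\), lying in a prescribed region determined by the lattice \(a_{t}u_x\Z^3\) at the return time. Each node carries a self-affine box of side lengths comparable to \((q^{-1-w_1}\cdot\text{stuff},\,q^{-1-w_2}\cdot\text{stuff})\), so the boxes respect the \(a_t\) geometry. In LSST the step parameter \(T\) is fixed and large, and the depth of the cusp excursion at each step is governed by a parameter \(\eps\). Letting \(\eps\to0\) produces singularity, and the counting of children yields dimension \(\to s_w\).

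\textbf{The variable schedule.} Here we instead use a variable sequence \((T_k,\eps_k)\):
\begin{itemize}
\item \(\eps_k\downarrow0\) slowly, so that \(W_x\to-\infty\) (singularity);
\item \(\eps_k\ge f(t_k)^{C}\) at the corresponding times \(t_k=\sum_{j\le k}T_j\);
\item \(\eps_k\) and \(T_k\) change so slowly that the mass-distribution estimate still gives local dimension \(\ge s_w-\eta\).
\end{itemize}
The dimension bound comes from the mass distribution principle applied to the uniform measure on the tree. Since \(f\) is arbitrary, slowness is achieved by holding each \((T,\eps)\) value for very many generations before moving on. The dimension contribution of generation \(k\) is \(\log(\#\text{children})/\log(\text{contraction})\), which tends to \(s_w\) as \(\eps_k\to0\) and \(T_k\to\infty\) appropriately. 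The errors from the finite transition periods are Cesàro-negligible. We must handle the self-affine (non-conformal) geometry by covering with squares of the smaller side and counting, exactly as in LSST's lower bound. We reuse their counting lemma for each fixed pair \((T,\eps)\) and check that its constants are uniform along the schedule.

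\textbf{Main obstacle: intermediate times.} The key step is the lower bound on \(W_x(t)\) for \(t\in[t_k,t_{k+1}]\), not just at the return times. A short vector of \(a_tu_x\Z^3\) at an intermediate time corresponds to some \((p,q)\) with \(q\) in a window between consecutive tree denominators that approximates \(x\) unusually well. We plan an empty-denominator-window argument to exclude this:
\begin{enumerate}
\item At the node \(v\) the lattice \(a_{t_k}u_x\Z^3\) has a controlled shape: one short vector of size \(\asymp\eps_k\) and a complementary two-dimensional sublattice of covolume \(\asymp\eps_k^{-1}\).
\item Any other vector that becomes shorter than \(f(t)\) during \([t_k,t_{k+1}]\) must therefore lie in a thin region whose denominator range is determined by \(q_k\) and \(q_{k+1}\).
\item Choosing children only in the LSST-admissible region already forces that window to contain no integer denominators achieving the required quality. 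Along \(a_t\), the length of the tracked vector \(v\) is explicitly \(\log\)-piecewise linear, with minimum \(\gtrsim\log\eps_k\).
\end{enumerate}
This gives \(W_x(t)\ge\log\eps_k-O(1)\ge\log f(t)\) throughout, without removing further branches of the tree, so the dimension count is untouched. I expect this uniform control over the whole interval to be the hardest part. The difficulty is that the unequal weights make the two expanding directions decouple, so the Minkowski-type shape argument must be carried out with the anisotropic functional \(\norm{\cdot}_w\) and Minkowski's second theorem on the dual lattice.
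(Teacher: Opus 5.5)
\textbf{Gap: the intermediate-time bound.} Your upper bound matches the paper, and so does the overall shape of your lower bound: a variable-step LSST tree, no extra pruning, and an empty denominator window between return times. The gap is in the intermediate-time step, which carries the theorem. You assert that admissibility of the children already forces $W_x(t)\ge\log\eps_k-O(1)$ on all of $[t_k,t_{k+1}]$, independently of the step $T_k$, and accordingly you tie only $\eps_k$ to $f$. Admissibility does not give this, because the child conditions constrain the lattice only at the return times.

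Suppose $v=(p_1,p_2,q)$ has $\tau_x(v)\in[t_n,t_{n+1}]$ and $m_x(v)<\log\alpha$. The systole bound $\gtrsim\eps_n^2$ for $a_{t_n}u_x\Z^3$ yields $q\ge 12^{-1/w_2}\eps_n^{2/w_2}e^{t_n}$. The identity $m_x(v)+\tau_x(v)=\log q$ yields $q<\alpha e^{t_{n+1}}$. This window is empty only if
\[
 \log(1/\alpha)>d_{n+1}+\tfrac{2}{w_2}\abs{\log\eps_n}+\tfrac{\log 12}{w_2},
\]
so the depth one can guarantee between return times deteriorates linearly with the step length. Combining the systole bounds at both ends $t_n$ and $t_{n+1}$ still gives only
$m_x(v)\ge-\tfrac12 d_{n+1}-\tfrac1{w_2}\bigl(\abs{\log\eps_n}+\abs{\log\eps_{n+1}}\bigr)-O(1)$.

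Meanwhile $T_k\to\infty$ is forced on you: you need $\eps_k\to0$ for singularity, $\eps_k^2e^{2T_k}\gtrsim1$ for non-extinction, and $\abs{\log\eps_k}=o(T_k)$ for dimension. But $f$ may tend to $0$ arbitrarily slowly. So a schedule in which only $\eps_k$ is coupled to $f$ does not empty the window, and nothing in your argument prevents $W_x$ from dropping below $\log f$ between return times.

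\textbf{Missing idea: choose the return times from the target.} The paper regularizes $f$ to a nondecreasing $1$-Lipschitz $\psi\to\infty$ with $e^{-\psi}\ge f$. It then sets $H_n=\psi(t_n)$, $d_{n+1}=H_n^{1/2}$, $\eps_n=e^{-H_n^{1/4}}$ and $\alpha_n=e^{-H_n}$. The hierarchy $H^{1/4}\ll H^{1/2}\ll H$ gives three things at once:
\begin{itemize}
\item the emptiness inequality $H_n>H_n^{1/2}+\tfrac2{w_2}H_n^{1/4}+\tfrac{\log12}{w_2}$;
\item the branching condition $\abs{\log\eps_n}=o(d_n)$;
\item $d_{n+1}=o(t_n)$, using $\psi(t)\le t+O(1)$.
\end{itemize}
The check at all times is then reduced, through the balance-time criterion, to $m_x(v)\ge-\psi(\tau_x(v))$ for each primitive $v$. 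This follows from the empty window together with $\psi(\tau_x(v))\ge\psi(t_n)$.

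Your block-constant schedule could be repaired by imposing $T_k+\tfrac{2}{w_2}\abs{\log\eps_k}+O(1)<-\log f(t_k)$ for a regularized $f$. It is this constraint, not admissibility, that empties the window.

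\textbf{Secondary error: the reduction runs the wrong way.} Since shrinking $f$ enlarges $E_{f,w}$, a lower bound for a smaller $f$ says nothing about $E_{f,w}$. You must instead replace $f$ by a larger regular function, as the paper does with $e^{-\psi}\ge f^{\downarrow}\ge f$.
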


We call \(x\in\R^2\) \(w\)-very well approximable if there exists
\(\delta>0\) such that
\[
 \max_{i=1,2}\abs{qx_i-p_i}^{1/w_i}<q^{-1-\delta}
\]
for infinitely many \((p,q)\in\Z^2\times\N\).
If \(-\log f(t)=o(t)\), the balance-time criterion in Section~2
shows that every \(x\in E_{f,w}\) is singular but not
\(w\)-very well approximable.  Hence
\[
 \dim_H\{x\in\Sing_w(2):x\text{ is not }w\text{-very well
 approximable}\}=s_w.
\]
The lower bound in \cref{thm:main} has two parts.  First, we show that
the LSST tree remains stable under level-dependent return times and
scales, with the same branching exponent.  Second, denominator
localization turns every intermediate excursion below the prescribed
envelope into an explicit interval for the denominator.  The return
times are chosen so that this interval is empty, and no further
pruning of the tree is required.

A complementary result concerns prescribed rates in weighted uniform
approximation.  We use the notation \(\operatorname{UA}_w(\Phi)\),
defined precisely in \cref{sec:uniform-approximation} as a
specialization of the KMWW framework.

\begin{theorem}\label{thm:subpower-uniform-rate}
Let \(0<\nu<1\) and \(\mu>0\).  Let
\(\Phi_{\nu,\mu}:[1,\infty)\to(0,\infty)\) be positive and
nonincreasing, with
\[
 \Phi_{\nu,\mu}(Q)
 =Q^{-1}\exp\!\left(-\mu(\log Q)^\nu\right)
 \qquad(Q\ge e).
\]
Then
\[
 \dim_H\operatorname{UA}_w(\Phi_{\nu,\mu})
 =\dim_H\left(
 \Sing_w(2)\setminus\operatorname{UA}_w(\Phi_{\nu,\mu})
 \right)
 =s_w.
\]
\end{theorem}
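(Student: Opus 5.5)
The plan is to translate uniform approximation with rate \(\Phi_{\nu,\mu}\) into a statement about the shortest-vector function \(W_x\), and then obtain both dimension lower bounds from tree constructions. The complement part follows from \cref{thm:main}; the set \(\operatorname{UA}_w(\Phi_{\nu,\mu})\) itself needs a separate power schedule inside the LSST tree. In each case the upper bound \(s_w\) is immediate from \eqref{eq:LSST-dimension}, since both sets lie in \(\Sing_w(2)\). For \(\operatorname{UA}_w(\Phi_{\nu,\mu})\) this containment holds because \(Q\Phi_{\nu,\mu}(Q)\to0\).

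First I would write down the Dani-type dictionary. Condition \(x\in\operatorname{UA}_w(\Phi)\) means that for all large \(Q\) there is \(q<Q\) with \(\max_i|qx_i-p_i|^{1/w_i}<\Phi(Q)\). Put \(Q=e^{t}\psi(t)\) with \(e^{t}\Phi(Q)\) balanced against \(e^{-t}Q\). This turns the condition into \(W_x(t)\le -h(t)+O(1)\) for all large \(t\). Here \(h\) is determined by \(\Phi\), and for \(\Phi_{\nu,\mu}\) one gets \(h(t)\asymp \tfrac{\mu}{2}(2t)^{\nu}\), a sub-linear power. Conversely, if \(W_x(t)\ge -h(t)/2\) along a sequence of times \(t\to\infty\) (in particular for all large \(t\)), then \(x\notin\operatorname{UA}_w(\Phi_{\nu,\mu})\). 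I would make this precise using the balance-time criterion of Section~2, being careful with constants since \(h\) is sub-linear. The constants only shift \(\mu\) by bounded factors, and that is harmless.

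For the complement, take \(f(t)=\exp(-h(t)/2)\), which tends to \(0\). Then \(E_{f,w}\subset\Sing_w(2)\setminus\operatorname{UA}_w(\Phi_{\nu,\mu})\), and \cref{thm:main} gives dimension \(s_w\).

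For \(\operatorname{UA}_w(\Phi_{\nu,\mu})\) itself I would rerun the construction behind \cref{thm:main} with a two-sided schedule. The return times \(t_k\) and scales are chosen so that at each return the trajectory sits at depth about \(-2h(t_k)\), and between returns \(W_x\) never rises above \(-h(t)\). The level-dependent stability of the LSST tree, already established for \cref{thm:main}, is what allows variable return times with the same branching exponent. A power schedule \(t_{k+1}-t_k\) growing like a power of \(k\), with depth \(\asymp t_k^{\nu}\), keeps the depths \(o(t_k)\). This should make the dimension loss from the extra depth vanish in the limit, because the mass-distribution exponent only sees \(\log(\text{depth})/\Delta t\to0\)-type corrections.

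The main obstacle is the upper control between returns. In \cref{thm:main} intermediate excursions only had to stay above an envelope. Here they must also stay below \(-h(t)\) throughout, including in the middle of the long gaps, where the trajectory is governed by the previous denominator \(q_k\). I would handle this by the piecewise-linear shape of \(W_x\) between consecutive best approximations. The vector \(q_k\) keeps \(W_x\) below the line rising at slope \(w_2\) (or \(1\)) from depth \(-2h(t_k)\), up to the time the next denominator takes over. So the gap \(t_{k+1}-t_k\) must be at most about \(h(t_k)\). This forces gaps of order \(t_k^{\nu}\), which is exactly the power schedule, and I would then verify that this many levels still yields exponent \(s_w\) via the counting from the LSST tree.
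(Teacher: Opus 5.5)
Your upper bound and your complement argument are sound and match the paper in substance; the paper phrases the complement as \cref{cor:uniform-rate-avoidance}. (A small slip: $t(Q)=\log Q+\frac{\mu}{2}(\log Q)^{\nu}$, so $h(t)\sim\frac{\mu}{2}t^{\nu}$, but only its divergence matters there.)

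The gap is in the lower bound for $\operatorname{UA}_w(\Phi_{\nu,\mu})$. Your constraint ``the gap $t_{k+1}-t_k$ must be at most about $h(t_k)$'' rests on a wrong picture of the trajectory, and the schedule it forces cannot run in the Section~3 tree. In that tree the only depth parameter is $g_k=\abs{\log\eps_k}$. At a return time $t_k$ the new vertical vector has weighted size in $(1/2,1]$, so any depth near $t_k$ must come from earlier vectors. The previous return vector gives $W_x(t_k)\le\max\{-d_k,-g_{k-1}/w_1\}$. Depth about $-2h(t_k)$ therefore means $g_{k-1}\approx 2w_1h(t_k)$. Combined with gaps $d_k\lesssim h(t_k)$, this keeps $g_n/d_n$ bounded below by a constant of order $w_1$, far above $\gamma_w/100$, where $\gamma_w=\min\{w_2,w_1-w_2\}$. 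That violates the counting hypothesis \eqref{eq:count-condition}, which is $100(\log2+g_n)\le\gamma_wd_n$, and the hypothesis $g_n=o(d_n)$ of \cref{prop:tree-dimension}. Even granting the counts, each vertex has only about $\eps_n^2e^{2d_n}$ children. The exponent loss per level is therefore $2g_n$, a relative correction of order depth over gap, not $\log(\text{depth})/\Delta t$. With depth comparable to gap, the bound from \cref{thm:LSST-2.1} is strictly below $s_w$.

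The missing observation is that long gaps do not endanger the upper control. By \cref{lem:recorded-approximants}, the level-$n$ return vector has $q_n\le e^{t_n}$ and $R_x\le\eps_n^{1/w_1}e^{-t_{n+1}}$. It therefore serves every $Q\in[e^{t_n},e^{t_{n+1}})$ with $Q\Delta_x(Q)\le\eps_n^{1/w_1}$. In $W_x$-language, its balance function on $[t_n,t_{n+1}]$ is V-shaped with valley near $-d_{n+1}/2$. It rises only near $t_{n+1}$, and the next vertical vector takes over about $g_n/(2w_1)$ after $t_{n+1}$. Your estimate identifies this short takeover time with the next return time. Thus the rate is governed by $\eps_n$ alone, and the gaps must be much longer than the depth.

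The paper takes $\eps_n=e^{-\chi t_n^{\nu}}$ with $\chi>w_1\mu$ and $d_{n+1}=t_n^{b_\nu}$ with $\nu<b_\nu<1$ (\cref{lem:power-schedule}). The inequality $b_\nu>\nu$ gives $g_n=o(d_n)$. The inequality $b_\nu<1$ gives $d_{n+1}=o(t_n)$ and $t_{n+1}/t_n\to1$, so $\frac{\chi}{w_1}t_n^{\nu}\ge\mu(\log Q)^{\nu}$ holds all the way up to $Q\approx e^{t_{n+1}}$. No denominator windows or two-sided control are needed for this half.
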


The two dimension equalities have different proofs.  A separate power
schedule in the LSST tree gives the first, while \cref{thm:main} and a
general rate-avoidance argument give the second.  Thus both the vectors
attaining this prescribed subpower improvement of the Dirichlet rate
and the singular vectors failing to attain it have Hausdorff dimension
\(s_w\).

Returning to the lower-envelope theorem, standard slicing and
local-product arguments give the following full-lattice consequence.

\begin{corollary}\label{cor:homogeneous}
With respect to any Riemannian metric on \(\cL_3\),
\[
 \dim_H
 \left\{
 \Lambda\in\cL_3:
 \norm{a_t\Lambda}_w\longrightarrow0,\quad
 \norm{a_t\Lambda}_w\ge f(t)\ \text{eventually}
 \right\}
 =8-\frac1{1+w_1}.
\]
\end{corollary}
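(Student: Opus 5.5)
We derive \cref{cor:homogeneous} from \cref{thm:main} using the local product structure of $\cL_3$ with respect to $a_t$. Write $\SL_3(\R)$ locally near the identity as $U^+\times P$. Here $U^+=\{u_x:x\in\R^2\}$ is the expanding horospherical subgroup. It is expanded by $a_t$ because $a_tu_xa_{-t}=u_{(e^{(1+w_1)t}x_1,e^{(1+w_2)t}x_2)}$. The group $P=\{p\}$ is the weakly contracting parabolic subgroup, consisting of block-lower-triangular matrices with respect to the splitting $\R^2\oplus\R$, and $\dim P=6$. Near any $\Lambda_0=g_0\SL_3(\Z)$, every lattice can be written as $pu_x g_0\SL_3(\Z)$, and this map is a bi-Lipschitz chart onto a neighbourhood, for any Riemannian metric. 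Since the set in question is $a_t$-invariant in the appropriate sense and countable stability of Hausdorff dimension applies, it suffices to work in one chart around each point of a countable cover. Right translation by $g_0$ only changes the base lattice. We handle this by noting that $u_xg_0\Z^3$ with $g_0$ fixed is covered by the same argument after reparametrizing. Alternatively, and more simply, we use charts centred at $g_0=\mathrm{id}$ composed with the left action, which is smooth.

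The key dynamical observation is this. For $p\in P$, $a_tpa_{-t}$ stays in a compact set $K_p$ for $t\ge0$. Hence
\[
\norm{a_tpu_x\Z^3}_w=\norm{(a_tpa_{-t})\,a_tu_x\Z^3}_w\asymp_{p}\norm{a_tu_x\Z^3}_w ,
\]
with constants uniform for $p$ in a compact neighbourhood. This holds because $\norm{g y}_w$ and $\norm{y}_w$ are comparable for $g$ in a compact set: a bounded linear map changes each coordinate by a bounded factor, and hence changes the anisotropic size by a bounded multiplicative factor, since $\norm{y}_w\le1$ implies $\abs{y_i}\le1$ and power functions are controlled. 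So the set $S_f$ in the corollary, intersected with a chart, equals $\{pu_x: x\in E'\}$, where $E'$ satisfies
\[
E_{Cf,w}\subseteq E'\subseteq \{x\in\Sing_w(2): W_x\ge\log(f/C)\ \text{eventually}\}.
\]
The two outer sets have the same form with $f$ replaced by $Cf$ and $C^{-1}f$, which still tend to $0$. By \cref{thm:main}, both have dimension $s_w$.

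For the dimension count, the lower bound follows because $S_f$ in the chart contains $P_0\times E_{Cf,w}$ with $P_0$ a $6$-dimensional ball. The product inequality $\dim_H(A\times B)\ge\dim_HA+\dim_HB$ then gives $6+s_w=8-\frac1{1+w_1}$, since $s_w=2-\frac1{1+w_1}$. For the upper bound, $S_f$ is contained in $P_0\times\Sing_w(2)$ locally. We use $\dim_H(A\times B)\le\dim_HA+\dim_P B$ with $A=\Sing_w(2)$ and $B=P_0$ a smooth ball, so $\dim_P=6$. This gives at most $s_w+6$ by \eqref{eq:LSST-dimension}. Bi-Lipschitz invariance of the charts and countable stability then finish the proof.

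The main obstacle is uniformity. We must check that the comparison constant $C$ is uniform over a chart, and that the chart maps $(p,x)\mapsto pu_x\Z^3$ cover $\cL_3$ by countably many bi-Lipschitz pieces. For the covering, we use charts $(p,x)\mapsto pu_xh\Z^3$ with $h$ ranging over a countable dense set. This requires replacing $\Z^3$ by $h\Z^3$ in \cref{thm:main}. To handle that, we note $h\Z^3=\gamma$-translates locally: writing $h=p'u_{x'}$ near the identity modulo $\SL_3(\Z)$ reduces to the base case. Since the conjugation-compactness argument absorbs $p'$, it suffices to cover a neighbourhood of the identity coset and then use that $\SL_3(\R)=\bigcup_k B_k\cdot$ small neighbourhoods. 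We also use that left multiplication by a fixed $g$ changes $\norm{a_t g\Lambda}_w$ only by bounded factors after splitting $g=pu$ with $u\in U^+$ absorbed into $x$ (valid on the open dense big cell, whose complement has lower dimension and is handled by a second cell decomposition).
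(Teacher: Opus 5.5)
There is a genuine gap in the step you call the key dynamical observation. Your block-lower-triangular group $P$ is weakly contracting only when $w_1=w_2$. For $w_1>w_2$ it contains the one-parameter group $v_s$ in the $(1,2)$ entry, and $a_tv_sa_{-t}=v_{e^{(w_1-w_2)t}s}$ is unbounded. Concretely, $a_tv_su_x\Z^3=v_{e^{(w_1-w_2)t}s}\,a_tu_x\Z^3$, and a short vector $y$ of $a_tu_x\Z^3$ is sent to $(y_1+e^{(w_1-w_2)t}sy_2,\,y_2,\,y_3)$, which need not be short. So neither divergence nor a lower envelope transfers between $a_tu_x\Z^3$ and $a_tpu_x\Z^3$. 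Both inclusions you rely on are therefore unjustified: $P_0u_{E_{Cf,w}}\Z^3\subseteq S_f$ for the lower bound, and $S_f\subseteq P_0u_{\Sing_w(2)}\Z^3$ for the upper bound.

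For $w_1>w_2$ the unstable group of $a_t$ is the $3$-dimensional upper unipotent group, and only the $5$-dimensional lower-triangular group has bounded conjugates. Your count $6+s_w$ comes out right only because $6=5+1$. The missing idea is how to extract the extra dimension from the expanding $v_s$-direction. Since $v_su_x=u_{(x_1+sx_2,\,x_2)}v_s$, nearby lattices have the form $p'u_xv_s\Z^3$ with $p'$ lower triangular. You must therefore run the lower-envelope construction with base lattice $\Lambda^\circ=v_s\Z^3$. These lattices lie in $\cL_3'\cap\cK_{\eta^2}^*(3)$ uniformly for $s$ in a compact interval, which is why Sections~3--4 are written for a general base lattice. \cref{thm:main}, which concerns only $\Z^3$, does not supply this. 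One then gains one dimension by slicing in $s$, for instance via an Eilenberg-type inequality $\int^*\mathcal H^{\alpha}(A_s)\,ds\ll\mathcal H^{\alpha+1}(A)$, and five more from bounded conjugation; this is the paper's route. For the upper bound, slice-wise information never bounds Hausdorff dimension from above. You need an estimate for the whole divergent set in $\cL_3$, which the paper takes from Solan's variational principle.

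A smaller, repairable error: for $g$ in a compact set, $\norm{gy}_w$ and $\norm{y}_w$ are not comparable up to bounded factors. Take $y=(\delta^{w_1},\delta^{w_2},\delta)$ and let $g$ be the unipotent matrix with $(3,2)$ entry $c>0$. Then $\norm{y}_w=\delta$ but $\norm{gy}_w\ge c\delta^{w_2}$. In general only a power comparison such as $\norm{gy}_w\ll\norm{y}_w^{w_2}$ holds for small $y$, so your sandwich between $E_{Cf,w}$ and its $C^{-1}f$ analogue is wrong as written. The paper repairs this by applying the construction to a regularization of $cf(t)^{w_2}$ and comparing Euclidean and weighted minima; this is harmless because the construction handles every target tending to $0$. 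Finally, the lower bound needs no countable covering, since one chart around a lattice $v_{s_0}\Z^3$ suffices. The covering discussion in your last paragraph also relies on the same incorrect contraction claim.
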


For \(s\in\R\), put
\[
 v_s=
 \begin{pmatrix}1&s&0\\0&1&0\\0&0&1\end{pmatrix}.
\]
On compact \(s\)-intervals, the lattices \(v_s\Z^3\) satisfy the
initial-lattice hypotheses of the variable-step construction
uniformly.  Apply that construction on the two-dimensional
\(u_x\)-slices to a regularization of
\(t\mapsto c f(t)^{w_2}\), for a suitable \(c>0\).  Bounded
conjugation in the five nonexpanding directions and the comparison of
the Euclidean and weighted minima convert this lower envelope into
\(f\).  Slicing in \(s\) and taking the local product therefore adds
six dimensions, as in \cite[Corollary~1.2]{Cheung11},
\cite[Theorem~1.5 and Corollary~1.6]{LSST}, and
\cite[Theorem~1.2 and Corollary~1.3]{KimPark}.  Solan's estimate
\cite[Corollary~2.34 and Example~3.1]{SolanGeneralFlow} gives the
matching upper bound.  We omit these standard slicing and local-product
steps.

The proof below is written for \(w_1>w_2\), because its use of
\cref{lem:LSST-3.10} requires the positive root gap \(w_1-w_2\).
The unweighted case \(w_1=w_2=1/2\) would require replacing this step
with Cheung's unweighted counting argument.  After the elementary
comparison between the Euclidean and weighted minima, the analogous
lower-envelope statement follows from
\cite[Theorem~3.12]{DFSU}, so we do not carry out that separate
argument.

\emph{Organization of the paper.}
Section~2 develops the balance-time criterion and regularizes the
target.  Section~3 constructs and analyzes the variable-step tree.
Section~4 combines denominator localization with the global schedule,
and Section~5 proves \cref{thm:main}.
\Cref{sec:uniform-approximation} introduces the weighted uniform
approximation notation, proves
\cref{thm:subpower-uniform-rate}, and derives the general
rate-avoidance consequence of \cref{thm:main}.

\section{Balance functions and target regularization}

We formulate the balance-time criterion for an arbitrary base lattice;
the standard lattice is the case needed for \cref{thm:main}.  A
nonzero vector \(v\) in a lattice \(\Lambda\) is called
\emph{primitive} if \(\Lambda\cap\R v=\Z v\).  We write
\(\wh\Lambda\) for the set of primitive vectors in \(\Lambda\).
Let \(e_3=(0,0,1)\).

Let \(\Lambda\in\cL_3\), let \(x\in\R^2\), and, for
\(v=(p_1,p_2,q)\in\Lambda\), put
\begin{equation}
 R_x(v)=
 \max_{i=1,2}\abs{qx_i-p_i}^{1/w_i}.
\end{equation}
When \(q>0\) and \(R_x(v)>0\), define the
\emph{balance time} \(\tau_x(v)\) and the
\emph{balanced minimum} \(m_x(v)\) by
\begin{equation}
 \tau_x(v)=
 \frac12\bigl(\log q-\log R_x(v)\bigr),
 \qquad
 m_x(v)=
 \frac12\bigl(\log q+\log R_x(v)\bigr).
\end{equation}
Then
\begin{equation}\label{eq:balance-identity}
 \log\max\left\{
 e^t\abs{qx_1-p_1}^{1/w_1},
 e^t\abs{qx_2-p_2}^{1/w_2},
 e^{-t}q
 \right\}
 =m_x(v)+\abs{t-\tau_x(v)}.
\end{equation}
If \(v=kv_0\) for an integer \(k\ge2\), then
the expression inside the logarithm in
\eqref{eq:balance-identity} is at least \(k\) times the corresponding
expression for \(v_0\).  Thus the minimum may be taken over primitive
vectors.  In particular, if
\(u_x\Z^3\cap\R e_3=\{0\}\), then the definition
\eqref{eq:W} has the simplified lower-envelope form
\begin{equation}\label{eq:W-lower-envelope}
 W_x(t)
 =\min_{\substack{v=(p_1,p_2,q)\in\wh{\Z^3}\\q>0}}
 \bigl(m_x(v)+\abs{t-\tau_x(v)}\bigr),
 \qquad t\ge0.
\end{equation}

For a general base lattice, the contributions from vectors with zero
third coordinate tend to infinity.  Hence, if
\(u_x\Lambda\cap\R e_3=\{0\}\), the analogue of
\eqref{eq:W-lower-envelope} holds for all sufficiently large \(t\)
when the minimum in \eqref{eq:balance-identity} is taken over
\(\wh\Lambda\).

\begin{proposition}\label{prop:eventual-balance}
Let \(\Lambda\in\cL_3\), let \(x\in\R^2\) satisfy
\(u_x\Lambda\cap\R e_3=\{0\}\), and let
\(\psi:[0,\infty)\to[0,\infty)\) be nondecreasing and
\(1\)-Lipschitz.  The following conditions are equivalent:
\begin{enumerate}[label=\textup{(\roman*)}]
\item for all sufficiently large \(t\),
\[
 \log\min_{\substack{v=(p_1,p_2,q)\in\Lambda\setminus\{0\}}}
 \max\left\{
 e^t\abs{qx_1-p_1}^{1/w_1},
 e^t\abs{qx_2-p_2}^{1/w_2},
 e^{-t}\abs{q}
 \right\}
 \ge-\psi(t);
\]
\item there exists \(T_0\ge0\) such that every
\(v=(p_1,p_2,q)\in\wh\Lambda\) with \(q>0\) satisfies
\[
 \tau_x(v)\ge T_0
 \quad\Longrightarrow\quad
 m_x(v)\ge-\psi(\tau_x(v)).
\]
\end{enumerate}
\end{proposition}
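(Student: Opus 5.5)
The plan is to reduce condition (i) to a lower-envelope statement in terms of the balance data and then compare two lower envelopes of V-shaped functions. Write \(F(t)\) for the logarithm in (i). Combining \eqref{eq:balance-identity} with the reduction to primitive vectors and the remark that vectors with zero third coordinate contribute terms tending to infinity, I would first fix \(T_1\) such that for \(t\ge T_1\)
\[
 F(t)=\min_{\substack{v\in\wh\Lambda\\ q>0}}\bigl(m_x(v)+\abs{t-\tau_x(v)}\bigr).
\]
Some care is needed here. Since \(v\) and \(-v\) give the same value, we may take \(q\ge0\). Vectors with \(q=0\) contribute at least \(e^{w_2^{-1}\cdot 0}\cdot e^{t}\min\abs{p_i}^{1/w_i}\), and \(\min\abs{p_i}^{1/w_i}\) is bounded below over the discrete set \(\Lambda\cap\{q=0\}\setminus\{0\}\). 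This bounded-below claim is the place where discreteness is used. A vector with \(q>0\) and \(R_x(v)=0\) lies in \(u_x\Lambda\cap\R e_3\) after applying \(u_x\), which is excluded. Hence \(\tau_x,m_x\) are defined for all relevant \(v\), and \(F(t)<\infty\) is the minimum of \(e^{-t}\)-comparable quantities, attained because the set is discrete.

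\emph{(ii)\(\Rightarrow\)(i).} Let \(v\) have \(\tau_x(v)\ge T_0\). Using the Lipschitz property, \(\psi(t)\ge\psi(\tau)-\abs{t-\tau}\), so
\[
 m_x(v)+\abs{t-\tau}\ge-\psi(\tau)+\abs{t-\tau}\ge-\psi(t).
\]
For vectors with \(\tau_x(v)<T_0\), I use \(m_x(v)+\abs{t-\tau}\ge m_x(v)+t-T_0\). The quantity \(m_x(v)=\tfrac12\log(qR_x(v))\) is bounded below over such \(v\). Indeed, \(e^{m}\) equals the value of the max-functional at time \(\tau\in[\text{?},T_0]\), and for bounded times that minimum is bounded below by discreteness of \(a_\tau u_x\Lambda\) with \(\tau\) in a compact range. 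One subtlety is that \(\tau\) might be very negative. In that case, though, \(m+\abs{t-\tau}\ge m+t-\tau\) is the value at time \(t\), which is at least \(\log\) of the max at time \(t\) with \(q\) only. Since \(e^{m+\abs{t-\tau}}\ge\max(e^{t-\tau}\cdot\ldots)\), I would argue more simply: for \(\tau<T_0\) and \(t\ge T_0\), \(m+t-\tau\) equals the value at \(t\) and equals \(m+(T_0-\tau)+(t-T_0)\), where \(m+(T_0-\tau)\) is the value at time \(T_0\), bounded below by \(c=F(T_0)>-\infty\). So these terms are \(\ge c+t-T_0\ge0\ge-\psi(t)\) for large \(t\). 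This gives (i).

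\emph{(i)\(\Rightarrow\)(ii).} Suppose \(F(t)\ge-\psi(t)\) for \(t\ge T_1\), and take \(T_0=T_1\). For \(v\) with \(\tau=\tau_x(v)\ge T_0\), evaluate at \(t=\tau\):
\[
 m_x(v)=m_x(v)+\abs{\tau-\tau}\ge F(\tau)\ge-\psi(\tau).
\]
I expect the only real obstacle to be the bookkeeping in the first step, namely the \(q=0\) vectors and bounding the early-balance vectors uniformly. The step above, splitting at \(T_0\) and rewriting the contribution through the value at time \(T_0\), handles the latter. The former follows from the argument that the \(q=0\) contribution is at least \(e^t\) times a positive constant.
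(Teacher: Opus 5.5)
Your proof is correct and follows the paper's argument. You prove (i)$\Rightarrow$(ii) by evaluating at $t=\tau_x(v)$. You prove (ii)$\Rightarrow$(i) via $\psi(\tau)\le\psi(t)+\abs{t-\tau}$, plus a uniform $t+O(1)$ lower bound for $q=0$ vectors and for vectors with $\tau_x(v)<T_0$. Your comparison with $F(T_0)$ is a cleaner way to get the paper's observation that $R_x(v)$ is bounded away from zero for such vectors, since their contribution at any time $t\ge T_0$ is exactly $t+\log R_x(v)$.

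Two small repairs are needed. First, the $q=0$ bound must use $\max_i\abs{p_i}^{1/w_i}$, not $\min_i\abs{p_i}^{1/w_i}$, which can vanish, e.g.\ at $(0,1,0)\in\Z^3$. Second, delete the intermediate claim that $m_x(v)$ is bounded below over all $v$ with $\tau_x(v)<T_0$. For a general $\Lambda$ it is false: one can have $q\to0^+$ with $qR_x(v)\to0$ as $\tau_x(v)\to-\infty$. Your $F(T_0)$ argument is the correct replacement.
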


\begin{proof}
If (i) holds, evaluate the contribution of a primitive vector at its
balance time to obtain
\[
 m_x(v)
 \ge
 \log\min_{z\in\Lambda\setminus\{0\}}
 \max\left\{
 e^{\tau_x(v)}R_x(z),
 e^{-\tau_x(v)}\abs{z_3}
 \right\}
 \ge-\psi(\tau_x(v))
\]
whenever \(\tau_x(v)\) is sufficiently large.

Conversely, assume (ii).  Monotonicity and the \(1\)-Lipschitz
property imply
\begin{equation}
 \abs{t-\tau}+\psi(t)\ge\psi(\tau)
 \qquad(t,\tau\ge0).
\end{equation}
Thus every vector with \(\tau_x(v)\ge T_0\) contributes at least
\(-\psi(t)\) at time \(t\).

It remains to treat vectors with bounded balance time.  Their values
\(R_x(v)\) are bounded away from zero.  Otherwise, there would be a
sequence \((v_j)\subset\wh\Lambda\), with \(q_j\) denoting the third
coordinate of \(v_j\), such that
\(R_x(v_j)\to0\) and \(\tau_x(v_j)<T_0\).  The identity
\[
 q_j=R_x(v_j)e^{2\tau_x(v_j)}
\]
would then imply \(u_xv_j\to0\), contrary to the discreteness of
\(u_x\Lambda\).  The corresponding balance functions therefore tend
uniformly to \(+\infty\) as \(t\to\infty\).  The same conclusion holds
for vectors with zero third coordinate, while vectors with negative
third coordinate are handled by changing sign.  Taking the primitive
lower envelope proves (i).
\end{proof}

For the standard lattice, the same identity gives the
very-well-approximable criterion used in the introduction:
\[
 x\text{ is }w\text{-very well approximable}
 \quad\Longleftrightarrow\quad
 \liminf_{t\to\infty}\frac{W_x(t)}t<0.
\]
Indeed, if \(R_x(v)<q^{-1-\delta}\), then at
\(t=\tau_x(v)\),
\[
 \frac{W_x(t)}t
 \le\frac{m_x(v)}{\tau_x(v)}
 \le-\frac{\delta}{2+\delta}.
\]
Conversely, if \(W_x(t_k)\le-\eta t_k\) along a sequence
\(t_k\to\infty\), where \(0<\eta<1\), a minimizing pair satisfies
\[
 q\le e^{(1-\eta)t_k},\qquad
 R_x(v)\le e^{-(1+\eta)t_k}
 \le q^{-(1+\eta)/(1-\eta)}.
\]
If the denominators are bounded, then a fixed pair occurs along a
subsequence and \(R_x(v)=0\), so \(x\) is rational and the conclusion
is immediate.  Otherwise these inequalities give infinitely many
power-law improvements.

\subsection{Regularization}

Since the definition of \(E_{f,w}\) concerns only sufficiently large
\(t\), changing \(f\) on a bounded initial interval does not affect
\(E_{f,w}\).  Choose \(T_f\ge0\) so
that \(f(t)\le1\) for every \(t\ge T_f\), replace \(f\) on
\([0,T_f]\) by \(\min\{1,f(t)\}\), and relabel the resulting function
as \(f\).  Thus \(0<f(t)\le1\) for every \(t\ge0\).
Define
\begin{equation}\label{eq:psi}
f^\downarrow(t)=\sup_{s\ge t}f(s),
\qquad
\psi(t)=
\inf_{0\le s\le t}
\bigl(-\log f^\downarrow(s)+t-s\bigr).
\end{equation}

\begin{lemma}\label{lem:regularization}
The function \(\psi\) is nondecreasing, \(1\)-Lipschitz, and
\(\psi(t)\to\infty\).  Moreover,
\[
0\le\psi(t)\le-\log f^\downarrow(t)
\quad\text{and}\quad
e^{-\psi(t)}\ge f^\downarrow(t)\ge f(t).
\]
Therefore,
\[
\left\{
x\in\Sing_w(2):
W_x(t)\ge-\psi(t)\text{ eventually}
\right\}
\subseteq E_{f,w}.
\]
\end{lemma}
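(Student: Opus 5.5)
The plan is to verify the listed properties one at a time directly from the definitions in \eqref{eq:psi}, and then read off the inclusion.

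\emph{Step 1: basic facts about \(f^\downarrow\) and \(g\).} Since \(0<f\le1\) and \(f(t)\to0\), the supremum defining \(f^\downarrow(t)\) is finite and positive. It satisfies \(f\le f^\downarrow\le1\). It is nonincreasing in \(t\), and it tends to \(0\), because \(\sup_{s\ge t}f(s)\to\limsup f=0\). Put \(g(s)=-\log f^\downarrow(s)\). Then \(g\ge0\), \(g\) is nondecreasing, and \(g(s)\to\infty\).

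\emph{Step 2: \(\psi\) as an infimal convolution.} We have \(\psi(t)=\inf_{0\le s\le t}(g(s)+t-s)\). Every term is \(\ge0\), so \(\psi\ge0\). Taking \(s=t\) gives \(\psi(t)\le g(t)=-\log f^\downarrow(t)\). Exponentiating yields \(e^{-\psi(t)}\ge f^\downarrow(t)\ge f(t)\).

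\emph{Step 3: monotonicity and the Lipschitz bound.} Fix \(t<t'\). For each \(s\in[0,t']\), there are two cases.
\begin{itemize}
\item If \(s\le t\), then \(g(s)+t'-s\ge g(s)+t-s\ge\psi(t)\).
\item If \(t<s\le t'\), then \(g(s)+t'-s\ge g(s)\ge g(t)\ge\psi(t)\), using that \(g\) is nondecreasing.
\end{itemize}
Hence \(\psi(t')\ge\psi(t)\). For the Lipschitz bound, each admissible \(s\le t\) is also admissible for \(t'\) and gives the value \(g(s)+t-s+(t'-t)\). Therefore \(\psi(t')\le\psi(t)+(t'-t)\).

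\emph{Step 4: divergence \(\psi(t)\to\infty\).} This is the only point needing care. Given \(M\), choose \(S\) with \(g(s)\ge M\) for \(s\ge S\). For \(t\ge S+M\), split the infimum.
\begin{itemize}
\item Terms with \(s\ge S\) are \(\ge g(s)\ge M\).
\item Terms with \(s<S\) are \(\ge t-s>t-S\ge M\), since \(g\ge0\).
\end{itemize}
Thus \(\psi(t)\ge M\).

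\emph{Step 5: the inclusion.} Let \(x\in\Sing_w(2)\) with \(W_x(t)\ge-\psi(t)\) for large \(t\). By Step 2, \(-\psi(t)\ge\log f^\downarrow(t)\ge\log f(t)\). Hence \(W_x(t)\ge\log f(t)\) eventually, that is, \(x\in E_{f,w}\).

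No step is a real obstacle. The only mildly delicate point is the divergence in Step 4, which requires splitting the infimum at a threshold \(S\).
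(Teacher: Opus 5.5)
Your proposal is correct and follows essentially the same route as the paper: you use the choice \(s=t\) for the bounds \(0\le\psi\le-\log f^\downarrow\), the same split of the infimum at the earlier time for monotonicity and the \(1\)-Lipschitz bound, and the same comparison \(-\psi\ge\log f^\downarrow\ge\log f\) for the inclusion. The only cosmetic difference is in the divergence step: you split the infimum at a threshold \(S\) beyond which \(-\log f^\downarrow\ge M\), whereas the paper splits at \(t/2\) to get \(\psi(t)\ge\min\{t/2,-\log f^\downarrow(t/2)\}\).
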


\begin{proof}
The function \(-\log f^\downarrow\) is nonnegative, nondecreasing,
and divergent.  Hence \(0\le\psi(t)\), while choosing \(s=t\) in
\eqref{eq:psi} gives
\(\psi(t)\le-\log f^\downarrow(t)\).  Let \(t_1<t_2\).  Splitting the
defining infimum gives
\[
\psi(t_2)
=
\min\left\{
\psi(t_1)+t_2-t_1,
\inf_{t_1<s\le t_2}
\bigl(-\log f^\downarrow(s)+t_2-s\bigr)
\right\}.
\]
Every term in the second infimum is at least
\(-\log f^\downarrow(t_1)\ge\psi(t_1)\).  Therefore,
\[
0\le\psi(t_2)-\psi(t_1)\le t_2-t_1,
\]
so \(\psi\) is nondecreasing and \(1\)-Lipschitz.

For \(0\le s\le t/2\), the expression in \eqref{eq:psi} is at least
\(t/2\), while for \(t/2\le s\le t\) it is at least
\(-\log f^\downarrow(t/2)\).  Hence
\[
\psi(t)\ge
\min\{t/2,-\log f^\downarrow(t/2)\}
\longrightarrow\infty.
\]
Finally,
\(-\psi(t)\ge\log f^\downarrow(t)\ge\log f(t)\).  Together with
membership in \(\Sing_w(2)\), this proves the last inclusion.
\end{proof}

The lower bound in \cref{thm:main} therefore reduces to the
following problem: for an arbitrary nondecreasing, \(1\)-Lipschitz
function \(\psi\) with \(\psi(t)\to\infty\), construct a set of
Hausdorff dimension \(s_w\) on which
\(W_x(t)\ge-\psi(t)\) eventually.

\section{The variable-step construction}

The self-affine dimension theorem \cref{thm:LSST-2.1} and the
lattice-point estimates
\cref{lem:LSST-3.7,lem:LSST-3.9,lem:LSST-3.10} are quoted from
\cite{LSST}.  The remainder of this section proves that
their tree construction and dimension calculation remain valid
when the return times and scales vary with the level.  These
variable-step statements are needed because the schedule in Section~4
depends on the prescribed target.

For \(R>0\), let
\[
 B_R=\{y\in\R^3:\norm{y}\le R\}.
\]
For a full-rank lattice \(\Lambda\), write
\(\Lambda^*=\{y\in\R^3:y\cdot z\in\Z\text{ for every }z\in\Lambda\}\)
for its dual lattice.
The notation \(\wh\Lambda\) for primitive vectors is as in
Section~2; in particular, \(\wh{\Lambda^*}\) denotes the
primitive vectors of the dual lattice.
For \(\rho>0\), define
\[
\cK_\rho^*(3)
=
\left\{
\Lambda\in\cL_3:
\norm{\varphi}\ge\rho
\text{ for every }\varphi\in\Lambda^*\setminus\{0\}
\right\}.
\]
For every \(\Lambda\in\cL_3\) with
\(\Lambda\cap\R e_3\ne\{0\}\), let \(\ell_3(\Lambda)>0\) be the
unique number such that
\[
 \Lambda\cap\R e_3=\ell_3(\Lambda)\Z e_3.
\]
Define
\[
\cL_3'
=
\left\{
\Lambda\in\cL_3:
\Lambda\cap\R e_3\ne\{0\},\quad
\frac12<\ell_3(\Lambda)\le1
\right\}.
\]

\subsection{Self-affine and lattice-point estimates}

A self-affine structure in \(\R^2\) is a pair \((\cT,\beta)\), where
\(\cT=\bigsqcup_{n\ge0}\cT_n\) is a rooted tree and \(\beta\) maps
each vertex to a nonempty compact axis-parallel rectangle.  It is
regular if every vertex has at least one child, each child rectangle is
contained in its parent rectangle, and the rectangle diameters tend to
zero along every infinite branch.  Its limit set is
\begin{equation}
\cF(\cT,\beta)
=
\left\{
x\in\R^2:
x\in\bigcap_{n\ge0}\beta(\sigma_n)
\text{ for some infinite branch }(\sigma_n)_{n\ge0}
\right\}.
\end{equation}

\begin{theorem}[{\cite[Theorem~2.1]{LSST}}]\label{thm:LSST-2.1}
Let \((\cT,\beta)\) be a regular self-affine structure in
\(\R^2\).  Suppose every rectangle at level \(n\) has short side
\(S_n\) and long side \(L_n\), with \(S_n\le L_n\).
Assume that every level \(n-1\) vertex has at least \(C_n\)
children, \(C_0=1\), and that distinct children of a level \(n\)
vertex have distance at least \(\rho_{n+1}S_n\), where
\(0<\rho_n\le1\).

Put
\[
P_n=\prod_{k=0}^n C_k,
\qquad
D_n
=
\max\{k\ge n:L_k\ge S_n\}.
\]
Define
\[
s_*=
\sup\left\{
s>0:
\frac{
\log\left(
P_nS_n^s\rho_{n+1}^s
\prod_{i=n+1}^{D_n}\rho_iC_i
\right)
}{
\max\{D_n-n,1\}
}
\longrightarrow+\infty
\right\}.
\]
If \(s_*>1\), then
\[
\dim_H\cF(\cT,\beta)\ge s_*.
\]
An empty product is interpreted as \(1\).
\end{theorem}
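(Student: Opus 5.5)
The plan is the mass distribution principle. First prune the tree so that every level \(n-1\) vertex has exactly \(C_n\) children. This keeps regularity, keeps the separation hypothesis, and shrinks the limit set, so it is harmless for a lower bound. Define a Borel probability measure \(\mu\) on \(\cF(\cT,\beta)\) by splitting the mass of each vertex equally among its children. Then every level \(n\) rectangle carries mass exactly \(P_n^{-1}\); regularity, the nesting and the vanishing diameters make \(\mu\) well defined and supported on the limit set. Fix \(1<s<s_*\). It suffices to show \(\mu(B(y,r))\ll_s r^{s}\) for all \(y\) and all small \(r\).

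For a given small \(r\), choose \(n\) with \(\rho_{n+2}S_{n+1}\le r<\rho_{n+1}S_n\). Shrinking \(r\) by a factor of \(2\) if necessary, a ball \(B=B(y,r)\) meets at most one child of any level \(n\) vertex, by the separation hypothesis. The main step is to count the level \(n\) rectangles meeting \(B\). For this, compare with ancestors at the coarser levels where the long side still dominates, that is, the levels \(k\le D_n\) with \(L_k\ge S_n\).

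At each level \(i\) with \(n<i\le D_n\), children of a common parent are \(\rho_iS_{i-1}\)-separated. These children are strips whose long side exceeds the size of a ball of radius \(\asymp S_n\), so such a ball effectively sees them only in the short direction. I will try to show that a set of diameter \(\asymp S_n\) meets at most \(\ll 1/\rho_i\) of them, up to a fixed constant factor per level, rather than all \(C_i\). Passing from level \(D_n\) down to level \(n\) then multiplies the naive mass bound by \(\prod_{i=n+1}^{D_n}(K/(\rho_iC_i))\) relative to the full subtree count. The target estimate is
\[
\mu(B)\ \le\ \frac{K^{\max\{D_n-n,1\}}}{P_n\prod_{i=n+1}^{D_n}\rho_iC_i}
\]
for a constant \(K\) depending only on the dimension.

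Since \(r\ge\rho_{n+2}S_{n+1}\), I will compare \(r^s\) with \((\rho_{n+1}S_n)^s\). The defining condition for \(s_*\) says that \(\log\bigl(P_nS_n^s\rho_{n+1}^s\prod\rho_iC_i\bigr)\) grows faster than any constant multiple of \(\max\{D_n-n,1\}\). This absorbs the factor \(K^{\max\{D_n-n,1\}}\) and gives \(\mu(B)\le (\rho_{n+1}S_n)^s\) for large \(n\). The remaining gap between \(\rho_{n+1}S_n\) and \(r\) is handled by applying the same count one level lower, at \(n+1\), where \(B\) has radius comparable to a level-\((n+1)\) separation scale. The assumption \(s>1\) enters here: inside a single rectangle of width \(S_{n+1}\), a ball of radius \(r\) covers a portion of the long side that is linear in \(r\), and \(r^1\ge r^s\) up to scale.

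The main obstacle is the geometric counting step: bounding how many separated, elongated, nested rectangles at levels \(n+1,\dots,D_n\) a small ball can meet, uniformly in \(n\), using only the separation \(\rho_iS_{i-1}\) and the monotone side lengths. I would first try to reduce it to a one-dimensional packing count along the short direction, treating the rectangles as intervals of length \(S_{i-1}\) spaced \(\rho_iS_{i-1}\) apart. Only if that failed would I appeal to the finer bookkeeping of LSST's proof of \cite[Theorem~2.1]{LSST}.
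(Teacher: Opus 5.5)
The paper gives no proof of this statement; it is quoted from LSST. So the comparison here is with what a complete argument needs. Your architecture is the right one: uniform mass on a tree pruned to $\lceil C_n\rceil$ children, the mass distribution principle, a loss of $O(1/\rho_i)$ per level for $n<i\le D_n$, absorption of $K^{\max\{D_n-n,1\}}$ by the superlinear growth in the definition of $s_*$, and $s>1$ for intermediate radii. As written, however, three steps are missing or wrong.

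First, the counting step you defer is the whole content, and the one-dimensional reduction you suggest fails as stated. Level-$i$ siblings are strips of width $S_i$ (not $S_{i-1}$) inside a parent of width $S_{i-1}$. Their separation $\rho_iS_{i-1}$ may be realized along the long direction, so their short-direction projections can overlap. (Also, the levels $n<i\le D_n$ are descendants of level $n$, not coarser ancestors.) A two-dimensional packing count works. Let $Q$ be a square of side $S_m$ and $m<i\le D_m$, so $L_i\ge L_{D_m}\ge S_m$ by nesting. Every level-$i$ child meeting $Q$ contains an $S_i\times S_m$ rectangle inside $3Q$. The $\frac12\rho_iS_{i-1}$-neighbourhoods of these rectangles are pairwise disjoint, each of area at least $\rho_iS_{i-1}S_m$. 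For a fixed parent they lie in a box of area at most $8S_{i-1}S_m$. Hence at most $8/\rho_i$ children of each parent meet $Q$, and for every level-$m$ rectangle $R$,
\[
 \mu(Q\cap R)\le\frac{8^{D_m-m}}{P_m\prod_{i=m+1}^{D_m}\rho_iC_i}.
\]

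Second, ``at most one child of each level-$n$ vertex'' does not bound how many level-$n$ rectangles $B$ meets. You need that same-level rectangles are pairwise disjoint with both sides at least $S_n\ge2r$. Each one meeting $B$ then contains a $2r\times2r$ square inside a fixed square of side $6r$, so there are at most $9$.

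Third, the bridging step is misdescribed. Take $n$ with $\frac12\rho_{n+2}S_{n+1}\le r<\frac12\rho_{n+1}S_n$. This range can span many orders of magnitude, so $B$ need not be comparable to the level-$(n+1)$ separation scale. The bound $\mu(B)\le(\rho_{n+1}S_n)^s$ is then useless when $r\ll\rho_{n+1}S_n$. Work entirely at index $n+1$ instead:
\begin{itemize}
\item By the second point and separation, $B$ meets at most $9$ level-$(n+1)$ rectangles $R'$.
\item Each $B\cap R'$ lies in a box of sides at most $S_{n+1}$ and $2r$, which is covered by at most $1+2r/S_{n+1}$ squares of side $S_{n+1}$. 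Use squares here, not balls of radius $\rho_{n+2}S_{n+1}$, which would cost an extra factor $1/\rho_{n+2}$.
\item The display with $m=n+1$, together with the hypothesis at index $n+1$, bounds the mass of each square by $(\rho_{n+2}S_{n+1})^s$ for large $n$.
\end{itemize}
Therefore
\[
 \mu(B)\le9\Bigl(1+\frac{2r}{S_{n+1}}\Bigr)(\rho_{n+2}S_{n+1})^s\ll r^s.
\]
This uses $r\ge\frac12\rho_{n+2}S_{n+1}$ when $r<S_{n+1}$, and $s\ge1$ with $(S_{n+1}/r)^{s-1}\le1$ when $r\ge S_{n+1}$. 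With these repairs your plan becomes a complete proof, and the level-$n$ target estimate can be dropped.
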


The cross-scale factor \(\prod_{i=n+1}^{D_n}\rho_iC_i\) is needed
because the rectangles may have different side lengths.

For a symmetric convex body \(K\subset\R^3\), let
\(\lambda_i(K,\Lambda)\) denote its \(i\)-th successive minimum with
respect to \(\Lambda\).  For
\(\mathbf r=(r_1,r_2,r_3)\in(0,\infty)^3\), put
\[
M_{\mathbf r}
=
\{z\in\R^3:\abs{z_i}\le r_i,\ i=1,2,3\}.
\]
We shall use the following unimodular specialization of the
lattice-point estimate in \cite{LSST}.

\begin{lemma}[{\cite[Lemma~3.7]{LSST}}]\label{lem:LSST-3.7}
There exists an absolute constant \(\widetilde c>0\) such that, for
any \(\Lambda\in\cL_3\), if
\[
\lambda_3(M_{\mathbf r},\Lambda)\le\widetilde c,
\qquad
-\lambda_3(M_{\mathbf r},\Lambda)
\log\lambda_1(M_{\mathbf r},\Lambda)
\le\widetilde c,
\]
then
\[
\frac{4}{5\zeta(3)}\vol M_{\mathbf r}
\le
\card{M_{\mathbf r}\cap\wh\Lambda}
\le
\frac{6}{5\zeta(3)}\vol M_{\mathbf r}.
\]
\end{lemma}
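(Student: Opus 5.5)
The plan is to count all lattice points first, pass to primitive points by Möbius inversion, and split the Möbius sum at the scale where the dilated box stops being well filled. Write \(K=M_{\mathbf r}\), \(\lambda_i=\lambda_i(K,\Lambda)\) and \(V=\vol K\).

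\emph{Step 1: a uniform count for all lattice points.} Since \(\Lambda\) is unimodular, Minkowski's second theorem gives
\[
V\asymp(\lambda_1\lambda_2\lambda_3)^{-1}.
\]
Choose a basis of \(\Lambda\) adapted to the successive minima; for example, a Mahler-type basis whose vectors lie in \(C\lambda_iK\). The associated fundamental parallelepiped then has diameter \(\ll\lambda_3\) in the \(K\)-gauge. Tiling by translates of it gives, for every \(k\ge1\),
\[
\card{\bigl((K/k)\cap\Lambda\bigr)}=\frac{V}{k^3}\bigl(1+O(k\lambda_3)\bigr)\quad\text{when } k\lambda_3\le c_0 .
\]
The error comes from tiles meeting a \(C\lambda_3\)-neighbourhood of \(\partial(K/k)\). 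For all \(k\), the same basis gives the crude bound
\[
\card{\bigl((K/k)\cap\Lambda\bigr)}\ll\prod_{i=1}^3\Bigl(1+\frac1{k\lambda_i}\Bigr),
\]
and \((K/k)\cap\Lambda=\{0\}\) once \(k>1/\lambda_1\).

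\emph{Step 2: Möbius inversion.} We have
\[
\card{(K\cap\wh\Lambda)}=\sum_{k\ge1}\mu(k)\,\card{\bigl((K/k)\cap\Lambda\setminus\{0\}\bigr)},
\]
and only \(k\le1/\lambda_1\) contribute. We split the sum into three ranges.

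\emph{Range \(k\le c_0/\lambda_3\).} Step 1 gives the main term \(V\sum\mu(k)k^{-3}\). Completing this sum to \(V/\zeta(3)\) costs \(O(V\lambda_3^2)\). The remaining error is \(O\bigl(V\lambda_3\sum_k k^{-2}\bigr)=O(V\lambda_3)\). Removing the origin costs \(O(\lambda_3^{-1})\), which is \(O(V\lambda_3)\) because \(V\gg\lambda_3^{-3}\).

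\emph{Range \(c_0/\lambda_3<k\le1/\lambda_1\).} Here \(1/(k\lambda_3)\ll1\), so the crude bound is
\[
\ll\frac1{k\lambda_1}+\frac1{k^2\lambda_1\lambda_2}\ll V\lambda_2\lambda_3\,\frac1k+V\lambda_3\,\frac1{k^2}.
\]
Summing over the range gives \(\ll V\lambda_3\bigl(\lambda_2\log(1/\lambda_1)+\lambda_3\bigr)\). Since \(\lambda_2\le\lambda_3\le\widetilde c\), this is \(\ll V\bigl(\lambda_3\log(1/\lambda_1)\bigr)+V\lambda_3^2\).

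\emph{Step 3: choice of \(\widetilde c\).} Altogether,
\[
\card{(K\cap\wh\Lambda)}=\frac{V}{\zeta(3)}\Bigl(1+O\bigl(\lambda_3+\lambda_3\log(1/\lambda_1)\bigr)\Bigr),
\]
with an absolute implied constant. Choosing \(\widetilde c\) small makes the relative error at most \(1/5\), which gives the factors \(4/5\) and \(6/5\).

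The main obstacle is Step 1: obtaining a lattice-point count in the thin, anisotropic box \(K/k\) that is uniform in the shape of \(\mathbf r\). It has to carry a relative error linear in \(k\lambda_3\) and a crude bound involving all three minima. Once that uniform geometry-of-numbers input is in place, the intermediate range \(c_0/\lambda_3<k\le1/\lambda_1\) is exactly what produces the logarithmic hypothesis \(-\lambda_3\log\lambda_1\le\widetilde c\).
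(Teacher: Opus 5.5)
Your argument is correct. There is no proof in the paper to compare it with: the statement is imported verbatim from \cite[Lemma~3.7]{LSST} and used only as a black box in \cref{thm:child-count}. What you give is a self-contained proof along the natural lines. It uses M\"obius inversion over $k\le1/\lambda_1$, a tiling count for $k\lambda_3\le c_0$, and a successive-minima bound for $c_0/\lambda_3<k\le1/\lambda_1$. This makes clear where each hypothesis enters. The condition $\lambda_3\le\widetilde c$ controls the main range and the tail of $\sum\mu(k)k^{-3}$. The condition $-\lambda_3\log\lambda_1\le\widetilde c$ controls the harmonic sum in the middle range.

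The step you single out as the main obstacle is actually routine and automatically independent of the shape of $\mathbf r$. Take a basis with $b_i\in\frac32\lambda_iK$, and put $P=\{\sum_it_ib_i:0\le t_i<1\}$, $L=K/k$, and $\delta=k\sum_i\norm{b_i}_K\le\frac92k\lambda_3$, where $\norm{\cdot}_K$ is the gauge of $K$. Then $(1-\delta)L\subseteq\bigcup_{v\in L\cap\Lambda}(v+P)\subseteq(1+\delta)L$ and $\vol P=1$, which is exactly your Step~1. The crude bound is the standard estimate $\card{(L\cap\Lambda)}\ll\prod_i\bigl(1+\lambda_i(L)^{-1}\bigr)$, for example via Henk's inequality.

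Finally, your middle-range computation actually yields $V\lambda_2\lambda_3\log(1/\lambda_1)+V\lambda_3^2$. So your argument needs less than the stated logarithmic hypothesis, and bounding $\lambda_2\le1$ to reach the statement as given is harmless.
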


Write \(\norm{\cdot}_\infty\) for the sup norm.  For
\(\mathbf r=(r_1,r_2,1)\), \(1\le r_1\le r_2\), and
\(0<s<1/2\), put
\[
D_{\mathbf r}=\diag(r_1,r_2,1)
\]
and define
\[
\mathcal S(\Lambda,\mathbf r,s)
=
\left\{
v\in M_{\mathbf r}\cap\wh\Lambda:
\begin{aligned}
&\exists\,\varphi\in\wh{\Lambda^*}
 \text{ with }\varphi\cdot v=0,\\[-1mm]
&\norm{(\varphi_1,\varphi_2)}_\infty\le s,\ 
 \norm{D_{\mathbf r}\varphi}_\infty\le3sr_2
\end{aligned}
\right\}.
\]

\begin{lemma}[{\cite[Lemma~3.9]{LSST}}]\label{lem:LSST-3.9}
Let \(\eps,t>0\), set
\[
s=\eps^2,\qquad
\mathbf r=(\eps e^t,\eps e^t,1),
\]
and let \(\Lambda\in\cL_3'\cap\cK_{\eps^2}^*(3)\).
There exists a positive constant, depending only on \(w\), such that
if \(\eps\) is smaller than this constant and
\[
e^{-w_2t/20}<\eps,
\]
then
\[
\card{
\mathcal S(a_t\Lambda,\mathbf r,s)
}
\le
\eps^{1/2}\vol M_{\mathbf r}.
\]
\end{lemma}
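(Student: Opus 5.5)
The plan is a double count over pairs \((\varphi,v)\). Every \(v\in\mathcal S(a_t\Lambda,\mathbf r,s)\) has at least one witness \(\varphi\in\wh{(a_t\Lambda)^*}\) with \(\varphi\cdot v=0\), \(\norm{(\varphi_1,\varphi_2)}_\infty\le s\) and \(\norm{D_{\mathbf r}\varphi}_\infty\le 3sr_2\). Hence
\[
 \card{\mathcal S(a_t\Lambda,\mathbf r,s)}
 \le\sum_{\varphi}\card{\bigl(M_{\mathbf r}\cap\varphi^\perp\cap a_t\Lambda\bigr)},
\]
where the sum runs over these admissible primitive dual vectors. Note that \((a_t\Lambda)^*=a_t^{-1}\Lambda^*\), since \(a_t\) is diagonal. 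This map contracts the first two dual coordinates by \(e^{-w_it}\) and expands the third by \(e^t\). For primitive \(\varphi\), the plane lattice \(\Gamma_\varphi=\varphi^\perp\cap a_t\Lambda\) has covolume \(\norm{\varphi}\).

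For each fixed \(\varphi\), I would use the standard planar counting bound
\[
 \card{(K\cap\Gamma)}\ll \frac{\operatorname{area}K}{\operatorname{covol}\Gamma}+\frac{\operatorname{perim}K}{\lambda_1(\Gamma)}+1,
\]
applied to the convex section \(K=M_{\mathbf r}\cap\varphi^\perp\). The section has area at most \(\ll\vol M_{\mathbf r}\,\norm{\varphi}/\norm{D_{\mathbf r}\varphi}_1\), because the width of \(M_{\mathbf r}\) in the direction \(\varphi/\norm{\varphi}\) is comparable to \(\norm{D_{\mathbf r}\varphi}_1/\norm{\varphi}\). The main term is therefore \(\ll\vol M_{\mathbf r}/\norm{D_{\mathbf r}\varphi}_1\).

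Summing over \(\varphi\), I would change variables \(y=D_{\mathbf r}\varphi\). The points \(y\) form a lattice of covolume \((r_1r_2)^{-1}\asymp(\vol M_{\mathbf r})^{-1}\) lying in the box \(\norm{y}_\infty\le3sr_2\). If these points were equidistributed at the relevant scales, the sum \(\sum_y\norm{y}_1^{-1}\) would be comparable to \(r_1r_2\int_{\norm y\le 3sr_2}dy/\abs y\asymp s^2\asymp\eps^4\). This would give a total main term of \(\ll\eps^4\vol M_{\mathbf r}\), well below \(\eps^{1/2}\vol M_{\mathbf r}\).

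\textbf{Main obstacle.} The difficulty is justifying this equidistribution, together with controlling the two error terms.
\begin{enumerate}[label=\textup{(\alph*)}]
\item The \(+1\) term requires bounding the number of admissible \(\varphi\), i.e. points of \(a_t^{-1}\Lambda^*\) in the box \(\abs{\varphi_1},\abs{\varphi_2}\le s\), \(\abs{\varphi_3}\le 3sr_2\).
\item The perimeter term involves small \(\lambda_1(\Gamma_\varphi)\), i.e. short vectors of \(a_t\Lambda\) lying in \(\varphi^\perp\).
\item The small-\(\norm y\) part of the sum cannot be compared with the integral.
\end{enumerate}

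For (a) and (c) I would pull the box back by \(a_t\) to a box for \(\Lambda^*\). There the hypothesis \(\Lambda\in\cK^*_{\eps^2}(3)\) (all dual vectors have norm \(\ge\eps^2\)) bounds the first successive minimum from below. I would then apply \cref{lem:LSST-3.7}, or a Davenport-type bound \(\card{}\ll\prod_i\max\{1,1/\lambda_i\}\) if its hypotheses fail, to get \(\ll\vol\) plus lower-order terms. The condition \(e^{-w_2t/20}<\eps\) is what should make the lower-order terms, which are powers of \(e^{-w_2 t}\) times powers of \(\eps^{-1}\), at most \(\eps^{1/2}\vol M_{\mathbf r}\).

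For (b) I would use \(\Lambda\in\cL_3'\): the vector \(\ell_3(\Lambda)e_3\) with \(\ell_3\in(1/2,1]\) is fixed up to scaling by \(a_t\). Together with the smallness of \(\eps\), this should show that short vectors of \(a_t\Lambda\) are essentially parallel to \(e_3\). Then \(\lambda_1(\Gamma_\varphi)\) is small only when \(\varphi_3\) is small, and these \(\varphi\) are few by the first-two-coordinate constraint \(\norm{(\varphi_1,\varphi_2)}_\infty\le s\). I expect this dyadic bookkeeping, splitting \(\varphi\) according to \(\norm{D_{\mathbf r}\varphi}\) and \(\lambda_1(\Gamma_\varphi)\), to be the hardest part. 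It is also where the constant depending on \(w\) enters.
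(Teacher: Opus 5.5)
The paper gives no proof of this lemma; it quotes it from LSST. Measured against what a complete argument needs, your plan has a genuine gap.

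\textbf{The missing fact.} You use the vertical vector only to locate short vectors of \(a_t\Lambda\), never to constrain \(\varphi\). Since \(\Lambda\in\cL_3'\), the vector \(u_0=\ell_3(\Lambda)e^{-t}e_3\) lies in \(a_t\Lambda\). So every \(\varphi\in(a_t\Lambda)^*\) satisfies \(\ell_3(\Lambda)e^{-t}\varphi_3\in\Z\), i.e.\ \(\varphi_3=0\) or \(\abs{\varphi_3}\ge e^t\). Admissibility gives \(\abs{\varphi_3}\le 3sr_2=3\eps^3e^t<e^t\) for \(\eps<3^{-1/3}\). Hence \emph{every} admissible \(\varphi\) is horizontal, and every plane \(\varphi^\perp\) contains \(u_0\).

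\textbf{Why (b) fails.} \(\lambda_1(\Gamma_\varphi)\le e^{-t}\) for \emph{all} admissible \(\varphi\). Each section \(M_{\mathbf r}\cap\varphi^\perp\) is a vertical rectangle of width \(\ge2\eps e^t\) and height \(2\). So your term \(\mathrm{perim}/\lambda_1\) is \(\ge4\eps e^{2t}\) for each admissible \(\varphi\). There are of order \(\eps^4e^t\) of them: they are the primitive points, in the square of side \(2\eps^2\), of the planar lattice \(L=(a_t\Lambda)^*\cap e_3^\perp\). This lattice has covolume \(\ell_3(\Lambda)e^{-t}\), and both its successive minima are \(\ll\eps^{-2}e^{-w_2t}<\eps^{18}\). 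The summed error is of order \(\eps^5e^{3t}\). This exceeds \(\eps^{1/2}\vol M_{\mathbf r}\) by a factor of order \(\eps^{5/2}e^t>\eps^{5/2-20/w_2}\). Your remedy, that \(\varphi\) with small \(\lambda_1(\Gamma_\varphi)\) are few, fails: they are all of them.

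\textbf{Why (a) and (c) rest on a false premise.} The equidistribution you hope to justify is false, because the \(\varphi\) lie on a plane rather than filling the three-dimensional box. Their number is about \(\eps^{-3}\) times the box volume, so (a) is not ``\(\vol\) plus lower order''. The hypothesis \(\lambda_3\le\widetilde c\) of \cref{lem:LSST-3.7} fails for the pulled-back box, since there \(\lambda_3\ge(3\eps^3)^{-1}\). The true main term is of order \(\eps\vol M_{\mathbf r}\), not \(\eps^4\vol M_{\mathbf r}\). Also, \(D_{\mathbf r}(a_t\Lambda)^*\) has covolume \(r_1r_2\), not \((r_1r_2)^{-1}\).

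\textbf{The repair.} With this observation the argument is short. On each plane, count along fibres of \(u_0\) instead of using the perimeter. \(\Gamma_\varphi\) is a union of translates of \(\Z u_0\) at horizontal spacing \(\norm{\varphi}e^t/\ell_3(\Lambda)\). Using \(\norm\varphi\le\sqrt2\eps^2\),
\[
 \card{(M_{\mathbf r}\cap\Gamma_\varphi)}
 \le\Bigl(\frac{2\sqrt2\,\eps}{\norm\varphi}+1\Bigr)(4e^t+1)
 \ll\frac{\eps e^t}{\norm\varphi}.
\]
Write \(\varphi=a_t^{-1}\psi\) with \(\psi\in\Lambda^*\) and \(\psi_3=0\). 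The hypothesis \(\Lambda\in\cK^*_{\eps^2}(3)\) then gives \(\lambda_1(L)\ge\eps^2e^{-w_1t}\). Partial summation over \(L\) yields
\[
 \sum_{\varphi\in L,\ 0<\norm\varphi\le\sqrt2\eps^2}\norm{\varphi}^{-1}
 \ll\eps^2e^t+\eps^{-2}(1+t)e^{w_1t}.
\]
Hence
\[
 \card{\mathcal S(a_t\Lambda,\mathbf r,s)}\ll\eps^3e^{2t}+\eps^{-1}(1+t)e^{(1+w_1)t}.
\]
The condition \(e^{-w_2t}<\eps^{20}\) makes the second term \(\ll\eps^7\vol M_{\mathbf r}\); this is where that hypothesis and the dependence on \(w\) enter. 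The total is therefore \(\ll\eps\vol M_{\mathbf r}\), which is at most \(\eps^{1/2}\vol M_{\mathbf r}\) once \(\eps\) is below a constant depending on \(w\).
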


\begin{lemma}[{\cite[Lemma~3.10]{LSST}}]\label{lem:LSST-3.10}
Let \(\eps,t,s>0\), define
\[
\delta_w=\frac1{20}\min\{w_2,w_1-w_2\},
\qquad
\mathbf r=
(\eps e^{w_1t},\eps e^{(w_1+2w_2)t},1),
\]
\[
b_t=
\diag(e^{(w_1-w_2)t},e^{2w_2t},e^{-t}),
\]
and let \(\Lambda\in\cK_{\eps^2}^*(3)\).
There exists a positive constant, depending only on \(w\), such that
whenever \(s>0\) is smaller than this constant and
\[
e^{-\delta_wt}<\eps<s,
\]
one has
\[
\card{
\mathcal S(b_t\Lambda,\mathbf r,s)
}
\le
s\vol M_{\mathbf r}.
\]
\end{lemma}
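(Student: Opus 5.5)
The plan is to bound \(\card{\mathcal S(b_t\Lambda,\mathbf r,s)}\) by a union bound over the admissible dual vectors \(\varphi\), counting each fiber \(M_{\mathbf r}\cap\varphi^\perp\cap b_t\Lambda\) as a two-dimensional lattice-point problem. The two sums that result are then controlled using \(\Lambda\in\cK^*_{\eps^2}(3)\) and the root gap \(w_1-w_2>0\).

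\emph{Step 1: fiber count.} Fix \(\varphi\in\wh{(b_t\Lambda)^*}\) with \(\norm{(\varphi_1,\varphi_2)}_\infty\le s\) and \(\norm{D_{\mathbf r}\varphi}_\infty\le 3sr_2\). Since \(\varphi\) is primitive and \(b_t\Lambda\) is unimodular, \(\Gamma_\varphi=b_t\Lambda\cap\varphi^\perp\) is a rank-two lattice of covolume \(\norm{\varphi}\). The section \(M_{\mathbf r}\cap\varphi^\perp\) is a centrally symmetric planar convex set. Changing variables by \(D_{\mathbf r}\) shows that its area is \(\asymp \vol M_{\mathbf r}\,\norm{\varphi}/\norm{D_{\mathbf r}\varphi}\). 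The standard planar estimate (area over covolume, plus perimeter over \(\lambda_1(\Gamma_\varphi)\), plus \(1\)) gives
\[
\card{(M_{\mathbf r}\cap\Gamma_\varphi)}
\ll \frac{\vol M_{\mathbf r}}{\norm{D_{\mathbf r}\varphi}_\infty}
+\frac{\operatorname{perim}(M_{\mathbf r}\cap\varphi^\perp)}{\lambda_1(\Gamma_\varphi)}+1 .
\]
Primitivity of \(v\) matters only through \(v\neq0\). I would absorb the perimeter term using the ordering \(r_3=1\le r_1\le r_2\): either the fiber has a point with \(\abs{v_3}\ge\) a constant, or it contains no lattice point outside a thin strip, which is handled by the next step.

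\emph{Step 2: counting admissible \(\varphi\).} Write \(\varphi=b_t^{-1}\varphi_0\) with \(\varphi_0\in\Lambda^*\), since the dual lattice transforms by the inverse transpose and \(b_t\) is diagonal. The coordinates of \(\varphi_0\) are then \((e^{(w_1-w_2)t}\varphi_1,\ e^{2w_2t}\varphi_2,\ e^{-t}\varphi_3)\). The constraints place \(\varphi_0\) in an explicit box \(B\) with \(\abs{\varphi_{0,1}}\le e^{(w_1-w_2)t}\min\{s,3sr_2/r_1\}\), \(\abs{\varphi_{0,2}}\le 3s\eps^{-1}e^{(w_1-w_2)t}\), and \(\abs{\varphi_{0,3}}\le 3s\eps e^{(w_2-w_1)t}\). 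I would split the admissible \(\varphi\) dyadically according to \(R=\norm{D_{\mathbf r}\varphi}_\infty\in[2^{k},2^{k+1})\). For each shell I would count the points of \(\Lambda^*\) in the corresponding box by Minkowski's second theorem: \(\Lambda\in\cK^*_{\eps^2}(3)\) gives \(\lambda_1(\cdot,\Lambda^*)\gg\eps^2\) in the Euclidean norm, so the count is \(\ll\vol(\text{box})\cdot\eps^{-O(1)}+O(\eps^{-O(1)})\).

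The decisive point is that the third side of \(B\) is \(\le 3s\eps e^{-(w_1-w_2)t}\), which is tiny. For \(t\) with \(e^{-\delta_wt}<\eps\), this forces every admissible \(\varphi_0\) into a lower-dimensional slab, and this is where \(w_1>w_2\) is used. Summing the Step 1 bound \(\vol M_{\mathbf r}/R\) against the shell counts then gives \(\ll s^{2}\eps^{-O(1)}\vol M_{\mathbf r}\) plus geometric error terms. The factor \(1/20\) in \(\delta_w\) is there so that the powers \(e^{-(w_1-w_2)t/2}\) or \(e^{-w_2t}\) beat every \(\eps^{-O(1)}\) loss. With \(\eps<s\) and \(s\) smaller than a constant depending only on \(w\), the total is then \(\le s\vol M_{\mathbf r}\).

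\emph{Main obstacle.} The hard part is the regime of small \(R\): dual vectors \(\varphi\) with \(\norm{D_{\mathbf r}\varphi}_\infty\) much smaller than \(sr_2\), whose fibers are large. Here the crude bound \(\vol M_{\mathbf r}/R\) is too weak, and one must show that only \(O(1)\), or even zero, such \(\varphi\) exist. My first attempt would be to show that two linearly independent such \(\varphi\) would produce a short vector in \(\Lambda^*\) or in \(\bigwedge^2\Lambda^*\), contradicting \(\Lambda\in\cK^*_{\eps^2}(3)\) once \(e^{-\delta_wt}<\eps\). The single remaining plane, if it exists, would then be handled directly: the primitive points of \(b_t\Lambda\) on it inside \(M_{\mathbf r}\) number \(\ll\eps^{-O(1)}r_1\), which is \(\le\frac s2\vol M_{\mathbf r}\) because \(r_2/r_1=e^{2w_2t}\) is exponentially large.
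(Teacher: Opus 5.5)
The paper gives no proof of this lemma (it is quoted from LSST), so your argument has to stand on its own. Its skeleton is reasonable: a union bound over admissible $\varphi$, with each fiber contributing about $\vol M_{\mathbf r}/\norm{D_{\mathbf r}\varphi}_\infty$. The step you call decisive, however, rests on a miscomputed dual box. Since $r_1\le r_2$, the condition $\norm{D_{\mathbf r}\varphi}_\infty\le3sr_2$ adds only $\abs{\varphi_3}\le3sr_2$ to $\abs{\varphi_1},\abs{\varphi_2}\le s$. Using $w_1+w_2=1$, $\varphi_0=b_t\varphi\in\Lambda^*$ therefore ranges over
\[
\abs{\varphi_{0,1}}\le se^{(w_1-w_2)t},\qquad
\abs{\varphi_{0,2}}\le se^{2w_2t},\qquad
\abs{\varphi_{0,3}}\le3sr_2e^{-t}=3s\eps e^{w_2t}.
\]
All three sides are exponentially large, so there is no slab, and $w_1>w_2$ does not enter as you describe. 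Nor is there any exponential saving in the main term. For $sr_2\le R\le3sr_2$, the admissible $\varphi$ with $\norm{D_{\mathbf r}\varphi}_\infty\le R$ correspond to $\Lambda^*$-points in a box of volume $8s^2R$, and each contributes about $\vol M_{\mathbf r}/R$. The top shells alone thus put a constant times $s^2\vol M_{\mathbf r}$ into your union bound. The inequality can then only come from $C_ws^2\le s/2$: you need $\card{\mathcal S(b_t\Lambda,\mathbf r,s)}\le C_ws^2\vol M_{\mathbf r}$ plus negligible errors, with $C_w$ independent of $\eps$. Your final bound $\ll s^2\eps^{-O(1)}\vol M_{\mathbf r}$ does not give this, since $\eps<s$ makes $\eps^{-O(1)}\ge s^{-O(1)}$, which is the wrong direction.

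What is missing is a count of dual vectors that separates two regimes.
\begin{itemize}
\item \emph{Top shells.} The box has inradius at least $s\min\{e^{(w_1-w_2)t},\eps e^{w_2t}\}>e^{18\delta_wt}$, while $\lambda_3(B_1,\Lambda^*)\ll\eps^{-4}<e^{4\delta_wt}$. So the number of $\Lambda^*$-points there is at most an absolute constant times the volume, and this is where the root gap and the factor $1/20$ actually enter.
\item \emph{Lower shells.} The boxes are thin in the third coordinate, and the number of primitive $\Lambda^*$-points can exceed the volume by exponential factors. Take $\Lambda=\diag(\eps^{-2},\eps^{-2},\eps^4)\Z^3$, whose dual contains $\eps^2\Z^2\times\{0\}$. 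So your bound $\vol\cdot\eps^{-O(1)}+O(\eps^{-O(1)})$ is false there.
\end{itemize}
Your plan to show that only $O(1)$ planes have small $R$ covers only $R\ll\eps^3e^{t/2}$. The reason is that a vector of $\bigwedge^2\Lambda^*\cong\Lambda$ contradicts $\Lambda\in\cK^*_{\eps^2}(3)$ only if it is shorter than a constant times $\eps^4$. Beyond that range you need the full Minkowski bound: a box $P$ contains $\ll(1+\lambda_3(P,\Lambda^*))\vol P$ primitive points outside the collinear case. You must then show that the lower shells contribute $\ll t\,\eps^{-K}e^{-20\delta_wt}\vol M_{\mathbf r}$ for some fixed $K<19$; the hypothesis $e^{-\delta_wt}<\eps<s$ makes this negligible against $s\vol M_{\mathbf r}$.

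Two further repairs are needed.
\begin{itemize}
\item Your single-plane bound $\ll\eps^{-O(1)}r_1$ is false. For $\Lambda=\diag(\eps^{-2},\eps,\eps)\Z^3\in\cK^*_{\eps^2}(3)$, the admissible plane $v_1=0$ contains $\asymp\eps^{-1}e^{(1+w_1)t}$ primitive points of $b_t\Lambda\cap M_{\mathbf r}$. The correct per-plane bound, $\ll\eps^{-3}e^{-w_2t}\vol M_{\mathbf r}$, comes from $\norm{D_{\mathbf r}\varphi}_\infty\ge\eps e^{w_2t}\norm{b_t\varphi}_\infty\gg\eps^3e^{w_2t}$.
\item The perimeter term of Step 1 cannot simply be absorbed, because $\lambda_1$ of a fiber lattice can be $\asymp\eps^4e^{-t}$. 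Primitivity of $v$ is what removes it. If the lattice points of a fiber are collinear, at most two are primitive. Otherwise Minkowski's second theorem bounds their number by a constant times area over covolume.
\end{itemize}
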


\subsection{Construction of the variable-step tree}

Throughout this section, all constants are uniform across levels,
parents, and branches and may depend only on the fixed weight \(w\).
The auxiliary constants \(r\), \(\eps_*\), and \(d_*\) are fixed in
that order in \cref{thm:child-count} below.  Fix a base lattice
\(\Lambda^\circ\in\cL_3\).

Let
\[
0\le t_0<t_1<t_2<\cdots,
\qquad
d_n=t_n-t_{n-1}\quad(n\ge1),
\]
and let \((\eps_n)_{n\ge0}\) satisfy
\[
1\ge\eps_0\ge\eps_1\ge\cdots>0,
\qquad \eps_n\longrightarrow0.
\]
For \(n\ge0\) and \(\xi\in\R^2\), define the level-\(n\)
rectangle and lattice by
\begin{equation}\label{eq:node-rectangle}
\beta_n(\xi)
=
\prod_{i=1}^2
\left[
\xi_i-\eps_ne^{-w_it_{n+1}-t_n},
\xi_i+\eps_ne^{-w_it_{n+1}-t_n}
\right]
\end{equation}
and
\[
\Lambda_n(\xi)=a_{t_n}u_\xi\Lambda^\circ.
\]
For \(n\ge1\), set
\[
k_n=\diag(e^{-w_2d_n},e^{w_2d_n},1),
\]
and, for a parent center \(\kappa\), define the level-\(n\)
child-center region
\begin{equation}\label{eq:child-center-region}
\mathcal I_n(\kappa)
=
\prod_{i=1}^2
\left[
\kappa_i-\frac{\eps_n}{2}e^{-w_it_n-t_{n-1}},
\kappa_i+\frac{\eps_n}{2}e^{-w_it_n-t_{n-1}}
\right].
\end{equation}
Its children are
\begin{equation}
\mathcal C_n(\kappa)
=
\left\{
\theta\in\mathcal I_n(\kappa):
\begin{array}{l}
\Lambda_n(\theta)\in\cL_3',\\
\Lambda_n(\theta)\in\cK_{\eps_n^2}^*(3),\\
k_n\Lambda_n(\theta)\in\cK_r^*(3)
\end{array}
\right\}.
\end{equation}
The three conditions respectively record a primitive vertical return
vector and exclude the two dual degeneracies counted by
\cref{lem:LSST-3.9,lem:LSST-3.10}.

We use words as vertices rather than centers alone.  This makes the
parent map well defined even if two words have the same center.

Fix \(\kappa_0\in\R^2\) satisfying
\[
a_{t_0}u_{\kappa_0}\Lambda^\circ
\in\cL_3'\cap\cK_{\eps_0^2}^*(3).
\]
Let
\[
 \cT_0=\{\varnothing\},
 \qquad
 c(\varnothing)=\kappa_0,
 \qquad
 \beta(\varnothing)=\beta_0(\kappa_0).
\]
Recursively, for \(n\ge1\), each
\(\sigma\in\cT_{n-1}\) and each
\(\theta\in\mathcal C_n(c(\sigma))\) determine a child vertex
\(\omega=(\sigma,\theta)\in\cT_n\).  Equivalently,
\begin{equation}
 \cT_n
 =\left\{(\sigma,\theta):
 \sigma\in\cT_{n-1},\ 
 \theta\in\mathcal C_n(c(\sigma))
 \right\}.
\end{equation}
For \(\omega=(\sigma,\theta)\), set
\[
 c(\omega)=\theta,
 \qquad
 \beta(\omega)=\beta_n(c(\omega)),
\]
and declare \(\sigma\) to be the parent of \(\omega\).  An infinite
branch is a sequence \((\sigma_n)_{n\ge0}\), where
\(\sigma_n\in\cT_n\) and \(\sigma_{n-1}\) is the parent of
\(\sigma_n\) for \(n\ge1\).
Set \(\cT=\bigsqcup_{n\ge0}\cT_n\) and
\[
 F=\cF(\cT,\beta).
\]

\subsection{Counting children}

The child count is obtained by counting all primitive candidates and
then subtracting the two illegal classes.
\Cref{lem:candidate-shell,lem:vertical-transference,lem:box-minima}
give the total count, while \cref{lem:bad-candidate-injection} reduces
the illegal counts to \cref{lem:LSST-3.9,lem:LSST-3.10}.

For \(n\ge1\), let \(\cA_n(\kappa)\) denote the candidates
\(\theta\in\mathcal I_n(\kappa)\) satisfying only the
\(\cL_3'\) condition.  Thus
\[
 \mathcal C_n(\kappa)
 =\cA_n(\kappa)\setminus
 (\cB_{1,n}(\kappa)\cup\cB_{2,n}(\kappa)),
\]
where
\begin{align*}
 \cB_{1,n}(\kappa)
 &=\{\theta\in\cA_n(\kappa):
       \Lambda_n(\theta)\notin\cK_{\eps_n^2}^*(3)\},\\
 \cB_{2,n}(\kappa)
 &=\{\theta\in\cA_n(\kappa):
       k_n\Lambda_n(\theta)\notin\cK_r^*(3)\}.
\end{align*}

\begin{lemma}\label{lem:bad-candidate-injection}
Let \(n\ge1\), let \(\kappa\) be a parent center at time
\(t_{n-1}\), and put
\[
 d=d_n,\qquad \eps=\eps_n.
\]
Define
\[
 \Lambda_1=a_{t_n}u_\kappa\Lambda^\circ,
 \qquad
 \Lambda_2=k_na_{t_n}u_\kappa\Lambda^\circ,
\]
let
\[
 s_1=\eps^2,\qquad s_2=r,
\]
and set
\[
 \mathbf r_1=(\eps e^d,\eps e^d,1),
 \qquad
 \mathbf r_2=
 (\eps e^{w_1d},\eps e^{(w_1+2w_2)d},1).
\]
Write \(\mathbf r_i=(r_{i,1},r_{i,2},1)\) for \(i=1,2\).
Assume \(r_{i,2}\ge r_{i,1}\ge1\) and \(s_i<1/2\).  Then, for
\(i=1,2\), there is an injection
\begin{equation}\label{eq:bad-injection}
 \cB_{i,n}(\kappa)
 \hookrightarrow
 \mathcal S(\Lambda_i,\mathbf r_i,s_i).
\end{equation}
\end{lemma}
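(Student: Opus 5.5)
The plan is to define the injection explicitly using the vertical return vector that the $\cL_3'$ condition provides, and then to transport the dual obstruction from $\Lambda_n(\theta)$ back to $\Lambda_i$ by a unipotent conjugation. Fix $\theta\in\cA_n(\kappa)$. Since $\Lambda_n(\theta)\in\cL_3'$, there is a unique $z=z_\theta\in\Lambda^\circ$ with
\[
a_{t_n}u_\theta z=\ell e_3,\qquad \ell=\ell_3(\Lambda_n(\theta))\in(1/2,1],
\]
and $z$ is primitive in $\Lambda^\circ$. Solving gives $z_3=\ell e^{t_n}$ and $z_i=\theta_iz_3$. I would map
\[
\theta\mapsto v_\theta=a_{t_n}u_\kappa z\in\wh{\Lambda_1}
\qquad\text{(for $i=1$)},\qquad
\theta\mapsto k_nv_\theta\in\wh{\Lambda_2}
\qquad\text{(for $i=2$)}.
\]
Directly,
\[
v_\theta=\bigl(e^{(1+w_1)t_n}(\theta_1-\kappa_1)\ell,\;e^{(1+w_2)t_n}(\theta_2-\kappa_2)\ell,\;\ell\bigr).
\]
The map is injective because $\theta_j$ is recovered from $v_\theta$ by $\theta_j=\kappa_j+(v_\theta)_j\,e^{-(1+w_j)t_n}/(v_\theta)_3$, and $k_n$ is invertible.

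Next I check membership in $M_{\mathbf r_i}$. Since $\theta\in\mathcal I_n(\kappa)$, we have $|\theta_j-\kappa_j|\le\frac{\eps}{2}e^{-w_jt_n-t_{n-1}}$. Together with $\ell\le1$, this gives $|(v_\theta)_j|\le\frac{\eps}{2}e^{d}\le r_{1,j}$ and $|(v_\theta)_3|\le1$. Applying $k_n$ multiplies the first coordinate by $e^{-w_2d}$ and the second by $e^{w_2d}$. This yields the bounds $\frac\eps2e^{w_1d}$ and $\frac\eps2e^{(w_1+2w_2)d}$, using $1-w_2=w_1$ and $1+w_2=w_1+2w_2$. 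Primitivity is preserved because $a_{t_n}u_\kappa$ and $k_n$ are group elements carrying $\Lambda^\circ$ onto the relevant lattices.

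It remains to produce the dual vector required by $\mathcal S$. Write $\Lambda_n(\theta)=u_\eta\Lambda_1$, where
\[
\eta_j=e^{(1+w_j)t_n}(\theta_j-\kappa_j)=(v_\theta)_j/\ell,
\]
so that $|\eta_j|\le\eps e^d$ because $\ell>1/2$. Then $u_\eta v_\theta=\ell e_3$, and $k_n\Lambda_n(\theta)=u_{\eta'}\Lambda_2$ with $\eta'=(e^{-w_2d}\eta_1,e^{w_2d}\eta_2)$. For $\theta\in\cB_{i,n}$, take a primitive $\varphi'\in(u_{\eta^{(i)}}\Lambda_i)^*$ with $\norm{\varphi'}<s_i$, where $\eta^{(1)}=\eta$ and $\eta^{(2)}=\eta'$. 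The lattice contains $\ell e_3$ (respectively $k_n\ell e_3=\ell e_3$), so $\ell\varphi'_3\in\Z$. Since $|\ell\varphi'_3|<s_i<1/2$, this forces $\varphi'_3=0$. Set $\varphi=u_{\eta^{(i)}}^{T}\varphi'$, which is a primitive vector of $\Lambda_i^*$. Then
\[
\varphi\cdot(\text{image of }\theta)=\varphi'\cdot\ell e_3=0,
\qquad
\varphi=\bigl(\varphi'_1,\varphi'_2,-\eta^{(i)}_1\varphi'_1-\eta^{(i)}_2\varphi'_2\bigr).
\]
Hence $\norm{(\varphi_1,\varphi_2)}_\infty\le s_i$. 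The entries of $D_{\mathbf r_i}\varphi$ are at most $r_{i,1}s_i$, $r_{i,2}s_i$, and $|\eta^{(i)}_1|s_i+|\eta^{(i)}_2|s_i$, and each $|\eta^{(i)}_j|\le r_{i,j}/\ell\cdot\frac12\cdot2\le r_{i,j}\le r_{i,2}$, so all three are at most $3s_ir_{i,2}$. Thus the image of $\theta$ lies in $\mathcal S(\Lambda_i,\mathbf r_i,s_i)$.

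The main obstacle is the bookkeeping in the last step: getting the exact conjugation identities $a_tu_ya_{-t}=u_{(e^{(1+w_j)t}y_j)}$ and $k_nu_\eta k_n^{-1}=u_{\eta'}$, and the transpose action on dual vectors. The key point is the observation that the short dual vector must vanish on $e_3$. I would verify this by the integrality argument above before checking the box constants.
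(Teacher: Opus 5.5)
Your proposal is correct and follows essentially the same route as the paper. Your image vector $a_{t_n}u_\kappa z_\theta$ (and its image under $k_n$) is exactly the paper's $z_i(\theta)=u_{x^{(i)}(\theta)}(\ell_\theta e_3)$ with $x^{(i)}(\theta)=-\eta^{(i)}$. You pull the short dual vector back by the transpose of the same unipotent, and integrality against $\ell_\theta e_3$ forces the pairing to vanish.

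The only differences are cosmetic. You use $\varphi'_3=0$ to bound the third entry of $D_{\mathbf r_i}\varphi$ by $2s_ir_{i,2}$, whereas the paper bounds it by $s_i(1+r_{i,1}+r_{i,2})\le 3s_ir_{i,2}$. Your garbled bound on $|\eta^{(i)}_j|$ is harmless, since $|\eta^{(i)}_j|\le\frac12 r_{i,j}$ follows directly from $\theta\in\mathcal I_n(\kappa)$.
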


\begin{proof}
For a candidate \(\theta\), set
\[
 \Lambda_1(\theta)=a_{t_n}u_\theta\Lambda^\circ,
 \qquad
 \Lambda_2(\theta)=k_na_{t_n}u_\theta\Lambda^\circ.
\]
Define \(x^{(1)}(\theta),x^{(2)}(\theta)\in\R^2\) by
\[
\begin{aligned}
 x^{(1)}(\theta)
 &=\diag\!\left(e^{(1+w_1)t_n},e^{(1+w_2)t_n}\right)
   (\kappa-\theta),\\
 x^{(2)}(\theta)
 &=\diag\!\left(
   e^{(1+w_1)t_n-w_2d},
   e^{(1+w_2)t_n+w_2d}
   \right)(\kappa-\theta).
\end{aligned}
\]
A direct calculation gives
\[
\begin{aligned}
 u_{x^{(1)}(\theta)}a_{t_n}u_\theta
 &=a_{t_n}u_\kappa,\\
 u_{x^{(2)}(\theta)}k_na_{t_n}u_\theta
 &=k_na_{t_n}u_\kappa.
\end{aligned}
\]
and hence
\[
 u_{x^{(i)}(\theta)}\Lambda_i(\theta)=\Lambda_i
 \quad(i=1,2).
\]
Since \(\theta\) lies in the child-center region
\eqref{eq:child-center-region},
\[
 \norm{x^{(1)}(\theta)}_\infty
 \le\frac\eps2e^d\le\eps e^d,
\]
and
\[
 \abs{x_1^{(2)}(\theta)}
 \le\frac\eps2e^{w_1d}\le\eps e^{w_1d},
 \qquad
 \abs{x_2^{(2)}(\theta)}
 \le\frac\eps2e^{(w_1+2w_2)d}
 \le\eps e^{(w_1+2w_2)d}.
\]
Although the candidates lie in the child-center region, we retain the larger
radii \(\mathbf r_1,\mathbf r_2\) above.  The bounds place the associated
vectors in the boxes \(M_{\mathbf r_i}\), so the parameters in
\cref{lem:LSST-3.9,lem:LSST-3.10} require no rescaling.

Because \(\Lambda_1(\theta)\in\cL_3'\), put
\[
 \ell_\theta:=\ell_3\bigl(\Lambda_1(\theta)\bigr).
\]
Then \(1/2<\ell_\theta\le1\), and
\(v_\theta:=\ell_\theta e_3\) is the positive primitive generator of
\(\Lambda_1(\theta)\cap\R e_3\).  Since \(k_ne_3=e_3\), the vector
\(v_\theta\) is primitive in both \(\Lambda_1(\theta)\) and
\(\Lambda_2(\theta)\).  Put
\[
 z_i(\theta)=u_{x^{(i)}(\theta)}v_\theta.
\]
The lattice identity above gives \(z_i(\theta)\in\wh{\Lambda_i}\), and
\[
 z_i(\theta)
 =\ell_\theta
 (-x_1^{(i)}(\theta),-x_2^{(i)}(\theta),1).
\]
Consequently,
\[
 z_i(\theta)\in M_{\mathbf r_i}\cap\wh{\Lambda_i},
 \qquad
 z_{i,3}(\theta)>0.
\]
The map \(\theta\mapsto z_i(\theta)\) is injective.  Indeed,
\[
 \frac{(z_{i,1}(\theta),z_{i,2}(\theta))}{z_{i,3}(\theta)}
 =-x^{(i)}(\theta),
\]
and the diagonal matrix defining \(x^{(i)}(\theta)\) is invertible, so
these ratios determine \(\theta\).  Restricting to positive third coordinate removes the possible
\(\pm z_i\) multiplicity.

Now suppose \(\theta\in\cB_{i,n}(\kappa)\).  The failure of the
\(i\)-th dual-systole condition gives a primitive dual vector
\[
 \xi_i\in\Lambda_i(\theta)^*\setminus\{0\},
 \qquad
 \norm{\xi_i}<s_i.
\]
Since \(\Lambda_i(\theta)=u_{x^{(i)}(\theta)}^{-1}\Lambda_i\),
\[
 \Lambda_i(\theta)^*
 =u_{x^{(i)}(\theta)}^{\mathsf T}\Lambda_i^*.
\]
Thus there is a primitive \(\varphi_i\in\Lambda_i^*\) such that
\[
 \xi_i=u_{x^{(i)}(\theta)}^{\mathsf T}\varphi_i.
\]
Since \(z_i(\theta)\in\Lambda_i\) and \(\varphi_i\in\Lambda_i^*\),
\(\varphi_i\cdot z_i(\theta)\in\Z\).  On the other hand,
\[
 \abs{\varphi_i\cdot z_i(\theta)}
 =\abs{\xi_i\cdot v_\theta}
 \le\norm{\xi_i}
 <s_i<1.
\]
Therefore,
\[
 \varphi_i\cdot z_i(\theta)=0.
\]

Writing
\(\varphi_i=(\varphi_{i,1},\varphi_{i,2},\varphi_{i,3})\), the preceding
pullback identity gives
\[
 \xi_i
 =\bigl(
 \varphi_{i,1},
 \varphi_{i,2},
 \varphi_{i,3}
 -x_1^{(i)}(\theta)\varphi_{i,1}
 -x_2^{(i)}(\theta)\varphi_{i,2}
 \bigr).
\]
Thus
\[
 \norm{(\varphi_{i,1},\varphi_{i,2})}_\infty<s_i,
\]
and, using \(\abs{x_j^{(i)}(\theta)}\le r_{i,j}\),
\[
 \abs{\varphi_{i,3}}
 <s_i(1+r_{i,1}+r_{i,2})
 \le3s_ir_{i,2}.
\]
Together with \(r_{i,1}\le r_{i,2}\), this gives
\[
 \norm{D_{\mathbf r_i}\varphi_i}_\infty<3s_ir_{i,2}.
\]
Hence
\[
 z_i(\theta)\in\mathcal S(\Lambda_i,\mathbf r_i,s_i).
\]
Injectivity of \(\theta\mapsto z_i(\theta)\) proves
\eqref{eq:bad-injection}.
\end{proof}

\begin{lemma}\label{lem:candidate-shell}
Let \(n\ge1\), and let \(\kappa\) be a parent center at time
\(t_{n-1}\).  Put
\(d=d_n\), \(\eps=\eps_n\), and
\[
 \Lambda_1=a_d(a_{t_{n-1}}u_\kappa\Lambda^\circ)
 =a_{t_n}u_\kappa\Lambda^\circ.
\]
Set
\[
 M=\left\{z\in\R^3:
 |z_j|\le\frac\eps2 e^d|z_3|\ (j=1,2),
 \quad \frac12<|z_3|\le1\right\}.
\]
For each \(\theta\in\cA_n(\kappa)\), put
\[
 \ell_\theta:=\ell_3\bigl(a_{t_n}u_\theta\Lambda^\circ\bigr).
\]
Then \(\ell_\theta e_3\) is the positive primitive vertical vector in
\(a_{t_n}u_\theta\Lambda^\circ\), and the map
\begin{equation}\label{eq:candidate-shell-forward}
 \theta\longmapsto
 z(\theta)=a_{t_n}u_{\kappa-\theta}a_{t_n}^{-1}
 (\ell_\theta e_3)
\end{equation}
is a bijection from \(\cA_n(\kappa)\) onto
\[
 \{z\in M\cap\wh\Lambda_1:z_3>0\}.
\]
Therefore,
\begin{equation}
 \card{\cA_n(\kappa)}
 =\frac12\card{M\cap\wh\Lambda_1}.
\end{equation}
\end{lemma}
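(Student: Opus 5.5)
The plan is to make the conjugation explicit and then check well-definedness, range, injectivity and surjectivity in turn. First I compute the conjugate
\[
 a_{t_n}u_{y}a_{t_n}^{-1}=u_{\tilde y},
 \qquad
 \tilde y=\bigl(e^{(1+w_1)t_n}y_1,\ e^{(1+w_2)t_n}y_2\bigr).
\]
This holds because \(a_t\) scales the \((i,3)\) entry by \(e^{w_it}e^{t}\). With \(y=\kappa-\theta\) we get \(\tilde y=x^{(1)}(\theta)\) as in the proof of \cref{lem:bad-candidate-injection}. Hence
\[
 a_{t_n}u_{\kappa-\theta}a_{t_n}^{-1}\,a_{t_n}u_\theta\Lambda^\circ=a_{t_n}u_\kappa\Lambda^\circ=\Lambda_1,
\]
and the map \(g_\theta:=u_{x^{(1)}(\theta)}\) carries \(\Lambda_n(\theta)\) onto \(\Lambda_1\). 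Since \(g_\theta\) is a linear isomorphism of lattices, it preserves primitivity. Therefore \(z(\theta)=\ell_\theta(-x^{(1)}_1(\theta),-x^{(1)}_2(\theta),1)\) lies in \(\wh\Lambda_1\) and has third coordinate \(\ell_\theta\in(1/2,1]\). The membership \(\theta\in\mathcal I_n(\kappa)\) gives
\[
 \abs{x^{(1)}_j(\theta)}\le\tfrac\eps2 e^{-w_jt_n-t_{n-1}}e^{(1+w_j)t_n}=\tfrac\eps2e^{d},
\]
so \(\abs{z_j}\le\frac\eps2e^d z_3\), and \(z(\theta)\) lies in \(M\) with \(z_3>0\). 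For well-definedness I also note that \(\ell_\theta e_3\) is the positive primitive vertical vector; this is immediate from the definition of \(\ell_3\) and \(\Lambda_n(\theta)\in\cL_3'\).

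Injectivity is as in \cref{lem:bad-candidate-injection}. The ratio \((z_1,z_2)/z_3=-x^{(1)}(\theta)\) determines \(\kappa-\theta\), because the diagonal scaling is invertible.

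The main step is surjectivity. Take \(z\in M\cap\wh\Lambda_1\) with \(z_3>0\), and define \(x=-(z_1,z_2)/z_3\). Then \(\abs{x_j}\le\frac\eps2e^d\), and \(\theta\) is fixed by \(\kappa-\theta=\diag(e^{-(1+w_1)t_n},e^{-(1+w_2)t_n})x\). The bound on \(x\) reverses the computation above and gives \(\theta\in\mathcal I_n(\kappa)\). Now \(u_x^{-1}z=u_{-x}z\). This vector has first coordinates \(z_j+x_jz_3=0\), so \(u_x^{-1}z=z_3e_3\), and it is primitive in \(u_x^{-1}\Lambda_1=\Lambda_n(\theta)\). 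Hence \(\Lambda_n(\theta)\cap\R e_3=z_3\Z e_3\), so \(\ell_3(\Lambda_n(\theta))=z_3\in(1/2,1]\). This gives \(\theta\in\cA_n(\kappa)\) and \(z(\theta)=z\). The delicate points here are that primitivity transfers through \(g_\theta\), and that the generator is exactly \(z_3\) with no proper divisor; the latter follows because a primitive vector spans \(\Lambda\cap\R v\).

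Finally, \(M\) and \(\wh\Lambda_1\) are symmetric under \(z\mapsto-z\), and \(M\) contains no vectors with \(z_3=0\). So \(M\cap\wh\Lambda_1\) splits evenly into its positive and negative halves, which gives \(\card{\cA_n(\kappa)}=\frac12\card{M\cap\wh\Lambda_1}\).
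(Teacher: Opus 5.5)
Your proof is correct and follows essentially the same route as the paper. The forward map lands in $M\cap\wh\Lambda_1$ by the explicit conjugation $a_{t_n}u_ya_{t_n}^{-1}=u_{\tilde y}$. The inverse recovers $\theta$ from the ratio $(z_1,z_2)/z_3$. The paper phrases this step by pulling $z$ back to a primitive $v=(p_1,p_2,q)\in\wh{\Lambda^\circ}$ and setting $\theta=(p_1,p_2)/q$, which is the same point you obtain. Primitivity then gives $\ell_3=z_3$, and central symmetry of $M$ gives the factor $\frac12$.
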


\begin{proof}
For a candidate \(\theta\), the conjugation identity gives
\[
 z(\theta)=\ell_\theta
 \left(e^{(1+w_1)t_n}(\theta_1-\kappa_1),
       e^{(1+w_2)t_n}(\theta_2-\kappa_2),1\right).
\]
The bounds defining \(\mathcal I_n(\kappa)\) imply \(z(\theta)\in M\), and
conjugation carries primitive vectors to primitive vectors, so
\(z(\theta)\in\wh\Lambda_1\).  Its third coordinate is positive.

Conversely, write
\[
 z=a_{t_n}u_\kappa v
 \in M\cap\wh\Lambda_1,
 \qquad z_3>0,
 \qquad
 v=(p_1,p_2,q)\in\wh{\Lambda^\circ}.
\]
Then \(q=e^{t_n}z_3>0\) and
\(e^{t_n}/2<q\le e^{t_n}\).  Define
\[
 \theta=\frac{(p_1,p_2)}q.
\]
Then
\[
 a_{t_n}u_\theta v=qe^{-t_n}e_3,
 \qquad \frac12<qe^{-t_n}\le1.
\]
Since \(v\) is primitive in \(\Lambda^\circ\), it generates
\(\Lambda^\circ\cap u_\theta^{-1}(\R e_3)\).  Hence
\[
 a_{t_n}u_\theta\Lambda^\circ\cap\R e_3
 =qe^{-t_n}\Z e_3,
\]
so \(a_{t_n}u_\theta\Lambda^\circ\in\cL_3'\).  Moreover,
\[
 \frac{z_j}{z_3}=e^{(1+w_j)t_n}(\theta_j-\kappa_j),
\]
and the shell inequalities are exactly the inequalities defining
\(\mathcal I_n(\kappa)\) in \eqref{eq:child-center-region}.  This constructs the inverse of
\eqref{eq:candidate-shell-forward}.  Finally, \(M\) is centrally
symmetric and contains no vector with \(z_3=0\), so exactly one member
of each pair \(\{z,-z\}\) has positive third coordinate.  This proves
the factor \(1/2\).
\end{proof}

\begin{lemma}\label{lem:vertical-transference}
Let \(0<\rho\le1\), and let
\[
 \Lambda\in\cL_3'\cap\cK_\rho^*(3).
\]
Then
\[
 \lambda_1(B_1,\Lambda)
 \ge \min\{\rho,1/2\}
 \ge \frac\rho2.
\]
\end{lemma}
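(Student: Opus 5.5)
Let \(y\in\Lambda\setminus\{0\}\) with \(\norm{y}<\min\{\rho,1/2\}\). We derive a contradiction by splitting according to whether \(y\) is vertical; this suffices because \(\lambda_1(B_1,\Lambda)\) is the minimal Euclidean length of a nonzero lattice vector. The two hypotheses handle the two cases. Membership in \(\cL_3'\) controls vectors on the line \(\R e_3\). Membership in \(\cK_\rho^*(3)\), combined with the vertical vector \(\ell_3(\Lambda)e_3\), controls all other vectors.

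\emph{Vertical case.} Suppose \(y\in\R e_3\). Then \(y\in\Lambda\cap\R e_3=\ell_3(\Lambda)\Z e_3\), so \(y=k\ell_3(\Lambda)e_3\) with \(k\ne0\) an integer. Hence \(\norm{y}\ge\ell_3(\Lambda)>1/2\), which contradicts \(\norm{y}<1/2\).

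\emph{Non-vertical case.} Suppose \(y\notin\R e_3\), and put \(v=\ell_3(\Lambda)e_3\), a primitive vector of \(\Lambda\). Then \(y\) and \(v\) span a two-dimensional sublattice \(\Gamma=\Lambda\cap\operatorname{span}(y,v)\). Its covolume satisfies
\[
 \operatorname{covol}(\Gamma)\le\norm{y\times v}\le\norm{y}\,\ell_3(\Lambda)\le\norm{y}.
\]
We now use the standard duality for primitive sublattices of a unimodular lattice: the normal direction to \(\Gamma\) contains a primitive dual vector \(\varphi\in\Lambda^*\) with \(\norm{\varphi}=\operatorname{covol}(\Gamma)/\operatorname{covol}(\Lambda)=\operatorname{covol}(\Gamma)\). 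Concretely, \(\Lambda^*\cap\Gamma^\perp\) is a rank-one lattice whose generator has norm \(\operatorname{covol}(\Gamma)\). This gives \(\norm{\varphi}\le\norm{y}<\rho\), contradicting \(\Lambda\in\cK_\rho^*(3)\).

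Alternatively, the cross product gives the dual vector directly. Write \(\Lambda=g\Z^3\) with \(\det g=1\), so that \(\Lambda^*=g^{-\mathsf T}\Z^3\). For \(y=ga\) and \(v=gb\) one has \(y\times v=g^{-\mathsf T}(a\times b)\) (cofactor identity), and \(a\times b\in\Z^3\setminus\{0\}\). Therefore \(y\times v\in\Lambda^*\setminus\{0\}\), with \(\norm{y\times v}\le\norm{y}\norm{v}\le\norm{y}<\rho\). This avoids the primitivity discussion, and it is the route I will write.

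Combining the two cases gives \(\lambda_1(B_1,\Lambda)\ge\min\{\rho,1/2\}\). Since \(\rho\le1\), we have \(\min\{\rho,1/2\}\ge\rho/2\), which gives the second inequality. The only step needing care is the identity \(g y'\times g v'=(\det g)\,g^{-\mathsf T}(y'\times v')\). It is standard and requires no work beyond citing it.
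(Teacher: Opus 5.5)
Your proposal is correct and follows essentially the same route as the paper: vertical vectors have norm at least \(\ell_3(\Lambda)>1/2\), and for a non-vertical \(y\) the cross product \(y\times\ell_3(\Lambda)e_3\) is a nonzero dual vector of norm at most \(\norm{y}\), so \(\norm{y}\ge\rho\). The only cosmetic difference is that you check membership in \(\Lambda^*\) via the cofactor identity \(ga\times gb=(\det g)\,g^{-\mathsf T}(a\times b)\), whereas the paper uses \((y\times v)\cdot z=\det(y,v,z)\in\Z\) for \(z\in\Lambda\); these are equivalent.
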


\begin{proof}
Let \(\pi(v_1,v_2,v_3)=(v_1,v_2)\).
If \(v\in(\Lambda\cap\R e_3)\setminus\{0\}\), then
\[
 \norm v\ge\ell_3(\Lambda)>\frac12.
\]
If \(v\notin\R e_3\), then
\[
 v\times \ell_3(\Lambda)e_3\in\Lambda^*\setminus\{0\},
\]
because
\[
 \bigl(v\times \ell_3(\Lambda)e_3\bigr)\cdot z
 =\det\bigl(v,\ell_3(\Lambda)e_3,z\bigr)\in\Z
\]
for every \(z\in\Lambda\).  Hence
\[
 \rho
 \le \norm{v\times \ell_3(\Lambda)e_3}
 =\ell_3(\Lambda)\norm{\pi(v)}
 \le\norm v.
\]
Taking the minimum over nonzero \(v\in\Lambda\) proves the claim.
\end{proof}

\begin{lemma}\label{lem:box-minima}
Let \(d>0\) and \(0<\eps\le1\).  Assume
\(\Lambda\in\cL_3'\cap\cK_{\eps^2}^*(3)\), and put
\(\Lambda_1=a_d\Lambda\).  Define
\[
 Q^{(1)}=
 \left\{z:\max(|z_1|,|z_2|)\le\frac14\eps e^d,
 |z_3|\le1\right\},
\]
\[
 Q^{(2)}=
 \left\{z:\max(|z_1|,|z_2|)\le\frac14\eps e^d,
 |z_3|\le\frac12\right\}.
\]
For each
\(Q\in\{Q^{(1)},Q^{(2)},2Q^{(2)}\}\),
\begin{equation}\label{eq:box-minima}
 \lambda_1(Q,\Lambda_1)\ge\frac{1}{2\sqrt3}e^{-d}\eps^2,
 \qquad
 \lambda_3(Q,\Lambda_1)\le24e^{-w_2d}\eps^{-3}.
\end{equation}
\end{lemma}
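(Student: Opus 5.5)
The plan is to treat the two bounds in \eqref{eq:box-minima} separately. The lower bound on \(\lambda_1\) pulls back to \(\Lambda\) and uses \cref{lem:vertical-transference}. The upper bound on \(\lambda_3\) exhibits three explicit linearly independent vectors of \(\Lambda_1\) in a suitable dilate of \(Q\). Since \(Q^{(2)}\subset Q^{(1)}\subset 2Q^{(2)}\) up to the factor in the first two coordinates, it suffices to prove the lower bound for the largest body \(2Q^{(2)}\) and the upper bound for the smallest body \(Q^{(2)}\). Successive minima are monotone under inclusion of bodies, so this covers all three cases.

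\emph{Lower bound.} Suppose \(z=a_dv\in\lambda\,(2Q^{(2)})\cap\Lambda_1\setminus\{0\}\) with \(v\in\Lambda\). Then
\[
\abs{v_j}=e^{-w_jd}\abs{z_j}\le \tfrac{\lambda}{2}\eps e^{(1-w_j)d}\le\lambda e^d \quad (j=1,2),
\qquad
\abs{v_3}=e^d\abs{z_3}\le\lambda e^d .
\]
Hence \(\norm v\le\sqrt3\,\lambda e^d\). By \cref{lem:vertical-transference} with \(\rho=\eps^2\), we also have \(\norm v\ge\eps^2/2\). Together these give \(\lambda\ge e^{-d}\eps^2/(2\sqrt3)\), which is the first inequality in \eqref{eq:box-minima}.

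\emph{Upper bound.} First, \(\Lambda_1\) contains the vertical vector \(a_d(\ell e_3)=\ell e^{-d}e_3\), where \(\ell=\ell_3(\Lambda)\in(1/2,1]\). This vector lies in \(2\ell e^{-d}Q^{(2)}\subset 2e^{-d}Q^{(2)}\).

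Next, consider the projected planar lattice \(\pi(\Lambda)\), where \(\pi\) forgets the third coordinate. Since \(\Lambda\cap\R e_3=\ell\Z e_3\), the lattice \(\pi(\Lambda)\) has covolume \(1/\ell\le2\). By the cross-product argument in the proof of \cref{lem:vertical-transference}, every nonzero \(u\in\pi(\Lambda)\) satisfies \(\ell\norm u\ge\eps^2\), so \(\lambda_1(\pi(\Lambda))\ge\eps^2\). Minkowski's second theorem in the plane then gives
\[
\lambda_2(\pi(\Lambda))\ll \frac{\operatorname{covol}\pi(\Lambda)}{\lambda_1(\pi(\Lambda))}\ll\eps^{-2}.
\]
So \(\pi(\Lambda)\) has two independent vectors \(u^{(1)},u^{(2)}\) of Euclidean norm \(\le C\eps^{-2}\).

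Lift each \(u^{(k)}\) to some \(v^{(k)}\in\Lambda\). Subtracting an integer multiple of \(\ell e_3\), we may assume \(\abs{v^{(k)}_3}\le\ell/2\). Then \(a_dv^{(k)}\) has
\[
\abs{(a_dv^{(k)})_j}\le C\eps^{-2}e^{w_jd}\le C\eps^{-2}e^{w_1d} \quad (j=1,2),
\qquad
\abs{(a_dv^{(k)})_3}\le e^{-d}/2 .
\]
Compared with the half-side \(\tfrac14\eps e^d\) of \(Q^{(2)}\), this places \(a_dv^{(k)}\) in \(\lambda Q^{(2)}\) with
\[
\lambda\ll\eps^{-3}e^{-(1-w_1)d}=\eps^{-3}e^{-w_2d}.
\]
The third-coordinate constraint needs only \(\lambda\ge e^{-d}\), which holds because \(\eps\le1\) and \(w_2<1\). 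The vertical vector and the two lifts are linearly independent, because their projections are \(0,u^{(1)},u^{(2)}\) and the vertical vector is nonzero. Hence \(\lambda_3(Q^{(2)},\Lambda_1)\ll e^{-w_2d}\eps^{-3}\), and the bound transfers to \(Q^{(1)}\) and \(2Q^{(2)}\).

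The only real work is the constant bookkeeping needed to land exactly on \(24\). Concretely, one uses \(\lambda_1\lambda_2\le 4\cdot 2/\pi\) times the covolume for the Euclidean disc, converts Euclidean to sup norm, and accounts for the factor \(4\) in the half-side \(\tfrac14\eps e^d\). I expect this to be routine but fiddly; the structural step is the reduction to the planar projected lattice \(\pi(\Lambda)\), where the \(\cK^*_{\eps^2}\) condition supplies the lower bound on \(\lambda_1\) and hence the upper bound on \(\lambda_2\).
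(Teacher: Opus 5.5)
Your argument is correct. Your lower bound on $\lambda_1$ and your reduction to the extreme boxes coincide with the paper's proof. The paper encodes that reduction as the inclusions $B_{\frac14\eps e^{w_2d}}\subseteq a_d^{-1}Q\subseteq B_{\sqrt3e^d}$ for all three boxes.

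The upper bound on $\lambda_3$ is where you take a genuinely different route. The paper invokes Mahler's transference inequality in dimension three, $\lambda_3(B_1,\Lambda)\lambda_1(B_1,\Lambda^*)\le 3!$. This gives $\lambda_3(B_1,\Lambda)\le 6\eps^{-2}$ from the dual-systole hypothesis alone, and the paper then rescales the inscribed ball; the constant $24=4\cdot 6$ falls out in two lines. You instead exhibit three independent short vectors by hand: the vertical vector $\ell e^{-d}e_3$ supplied by $\cL_3'$, and lifts of two independent vectors of the projected planar lattice $\pi(\Lambda)$. You control the first minimum of $\pi(\Lambda)$ by the same cross-product trick as in the vertical-transference lemma.

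Your route replaces three-dimensional transference by Minkowski's second theorem in the plane and uses the $\cL_3'$ hypothesis a second time. In exchange it is more explicit and gives a sharper constant. The paper's route is shorter and uses only the hypothesis $\Lambda\in\cK^*_{\eps^2}(3)$ for this half of the lemma.

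The bookkeeping you deferred is immediate. Since $\lambda_1(\pi(\Lambda))\ge\eps^2/\ell$ and $\operatorname{covol}\pi(\Lambda)=1/\ell$, the factor $\ell$ cancels in Minkowski's bound, so $\lambda_2(\pi(\Lambda))\le\frac4\pi\eps^{-2}$. Hence
\[
 \lambda_3(Q^{(2)},\Lambda_1)\le\max\Bigl\{2e^{-d},\ \tfrac{16}{\pi}\eps^{-3}e^{-w_2d}\Bigr\}=\tfrac{16}{\pi}\eps^{-3}e^{-w_2d}<24\,e^{-w_2d}\eps^{-3}.
\]
Even the cruder constants in your sketch stay below $24$: $32/\pi$ or $64/\pi$.
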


\begin{proof}
By \cref{lem:vertical-transference} and Mahler's transference
inequality in dimension three \cite[Chapter~VIII]{Cassels},
\[
 \lambda_1(B_1,\Lambda)\ge\frac12\eps^2,
 \qquad
 \lambda_3(B_1,\Lambda)
 \le\frac6{\lambda_1(B_1,\Lambda^*)}
 \le6\eps^{-2}.
\]
For all three boxes,
\[
 a_d^{-1}Q\subseteq B_{\sqrt3e^d}.
\]
Therefore a nonzero vector of \(\Lambda\) in
 \(\lambda a_d^{-1}Q\), for \(\lambda>0\), has Euclidean norm at most
\(\sqrt3\lambda e^d\), which gives
\[
 \lambda_1(Q,\Lambda_1)
 =\lambda_1(a_d^{-1}Q,\Lambda)
 \ge\frac{1}{2\sqrt3}e^{-d}\eps^2.
\]
On the other hand, direct comparison of the three semi-axes gives
\[
 B_{\frac14\eps e^{w_2d}}\subseteq a_d^{-1}Q.
\]
Therefore
\[
 \lambda_3(Q,\Lambda_1)
 =\lambda_3(a_d^{-1}Q,\Lambda)
 \le4(\eps e^{w_2d})^{-1}\lambda_3(B_1,\Lambda)
 \le24e^{-w_2d}\eps^{-3}.
\]
\end{proof}

\begin{theorem}\label{thm:child-count}
There exist positive constants \(r,\eps_*,d_*\), chosen in the order
\begin{equation}\label{eq:parameter-order}
 r\longrightarrow\eps_*\longrightarrow d_*,
\end{equation}
and constants \(0<c_-<c_+<\infty\) such that the following holds.
Let \(n\ge1\), and let \(\kappa\) be a parent center at time
\(t_{n-1}\).  Put \(d=d_n\) and \(\eps=\eps_n\), and suppose that,
for some parent scale \(\eta\in(0,1]\),
\[
\Lambda:=a_{t_{n-1}}u_\kappa\Lambda^\circ
\in\cL_3'\cap\cK_{\eta^2}^*(3).
\]
Assume
\[
0<\eps\le\min\{\eta,\eps_*\},
\qquad d\ge d_*,
\]
and
\begin{equation}\label{eq:count-condition}
 \left(\frac2\eps\right)^{100}
 \le
 \min\{e^{w_2d},e^{(w_1-w_2)d}\}.
\end{equation}
Then
\begin{equation}
 c_-\eps^2e^{2d}
 \le
 \card{\mathcal C_n(\kappa)}
 \le
 c_+\eps^2e^{2d}.
\end{equation}
\end{theorem}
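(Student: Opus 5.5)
The count follows the decomposition already set up before the statement,
\[
 \card{\mathcal C_n(\kappa)}
 \ge\card{\cA_n(\kappa)}-\card{\cB_{1,n}(\kappa)}-\card{\cB_{2,n}(\kappa)},
 \qquad
 \card{\mathcal C_n(\kappa)}\le\card{\cA_n(\kappa)}.
\]
The main term is handled by \cref{lem:candidate-shell}, which gives \(\card{\cA_n(\kappa)}=\frac12\card{M\cap\wh\Lambda_1}\) with \(\Lambda_1=a_d\Lambda\). The shell \(M\) is the region \(|z_j|\le\frac\eps2e^d|z_3|\), \(\frac12<|z_3|\le1\). It contains \(Q^{(1)}\setminus Q^{(2)}\) and is contained in \(2Q^{(2)}\setminus Q^{(2)}\), where the boxes are those of \cref{lem:box-minima}. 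Indeed, if \(|z_3|>\frac12\), then \(\frac14\eps e^d\le\frac\eps2e^d|z_3|\); and conversely \(|z_j|\le\frac12\eps e^d\).

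So I will apply \cref{lem:LSST-3.7} to each of the boxes \(Q^{(1)}\), \(Q^{(2)}\), \(2Q^{(2)}\). The lattice \(\Lambda\) lies in \(\cL_3'\cap\cK_{\eta^2}^*(3)\subseteq\cK_{\eps^2}^*(3)\) because \(\eps\le\eta\), so \cref{lem:box-minima} applies with parameter \(\eps\). It gives \(\lambda_3\le24e^{-w_2d}\eps^{-3}\) and \(-\log\lambda_1\le d+\log(2\sqrt3)+2\log(1/\eps)\). Hence
\[
 -\lambda_3\log\lambda_1\ll e^{-w_2d}\eps^{-3}\bigl(d+\log(1/\eps)\bigr).
\]
By \eqref{eq:count-condition}, \(e^{-w_2d}\le(\eps/2)^{100}\), so this bound is at most \(\eps^{90}\) once \(d\ge d_*\). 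Here the factor \(d e^{-w_2d/2}\) is absorbed by taking \(d_*\) large, splitting \(e^{-w_2d}=e^{-w_2d/2}e^{-w_2d/2}\) and still keeping \((\eps/2)^{50}\). Thus both hypotheses of \cref{lem:LSST-3.7} hold with \(\widetilde c\) for \(\eps_*\), \(d_*\) suitable. The volumes are \(\vol Q^{(1)}=\frac12\eps^2e^{2d}\), \(\vol Q^{(2)}=\frac14\eps^2e^{2d}\) and \(\vol 2Q^{(2)}=\eps^2e^{2d}\). Therefore
\[
 \card{M\cap\wh\Lambda_1}\ge\frac{4}{5\zeta(3)}\cdot\frac12\eps^2e^{2d}-\frac{6}{5\zeta(3)}\cdot\frac14\eps^2e^{2d}=\frac{1}{10\zeta(3)}\eps^2e^{2d},
\]
and the upper bound \(\card{M\cap\wh\Lambda_1}\le\frac{6}{5\zeta(3)}\eps^2e^{2d}\) holds. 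This gives \(c_+\). The positivity of the lower constant is tight but works, since \(\frac45\cdot\frac12=\frac25>\frac6{20}\).

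The bad sets are handled by \cref{lem:bad-candidate-injection}. With \(\mathbf r_1=(\eps e^d,\eps e^d,1)\) and \(\Lambda_1=a_d\Lambda\), I apply \cref{lem:LSST-3.9} with \(t=d\) to the lattice \(\Lambda\in\cL_3'\cap\cK_{\eps^2}^*(3)\). This requires \(e^{-w_2d/20}<\eps\), which follows from \eqref{eq:count-condition}, and \(\eps\le\eps_*\) below the constant there. It gives
\[
 \card{\cB_{1,n}}\le\eps^{1/2}\vol M_{\mathbf r_1}=8\eps^{5/2}e^{2d}.
\]
For \(\cB_{2,n}\), note that \(k_na_d=b_d\), since
\[
 \diag(e^{-w_2d},e^{w_2d},1)\diag(e^{w_1d},e^{w_2d},e^{-d})=\diag(e^{(w_1-w_2)d},e^{2w_2d},e^{-d}).
\]
So \(\Lambda_2=b_d\Lambda\). \cref{lem:LSST-3.10} with \(s=r\) needs \(e^{-\delta_wd}<\eps<r\); the left inequality again follows from \eqref{eq:count-condition}, and the right holds once \(\eps_*<r\). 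It gives
\[
 \card{\cB_{2,n}}\le r\vol M_{\mathbf r_2}=8r\eps^2e^{2d},
\]
because \(w_1+(w_1+2w_2)=2\).

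The injection lemma also needs \(r_{i,2}\ge r_{i,1}\ge1\) and \(s_i<\frac12\). These hold since \(\eps e^{w_1d}\ge1\) by \eqref{eq:count-condition}, and \(r,\eps_*<\frac12\).

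This fixes the order of constants \(r\to\eps_*\to d_*\). First choose \(r\) with \(8r\le\frac{1}{80\zeta(3)}\) and below the constant of \cref{lem:LSST-3.10}. Then choose \(\eps_*<r\) so that \(8\eps_*^{1/2}\le\frac1{80\zeta(3)}\) and \cref{lem:LSST-3.9,lem:LSST-3.7} apply. Finally choose \(d_*\). The result is \(\card{\mathcal C_n}\ge\frac1{80\zeta(3)}\eps^2e^{2d}\).

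The main obstacle is verifying the \cref{lem:LSST-3.7} hypotheses uniformly, together with the annulus subtraction. The logarithmic term \(d\) in \(-\log\lambda_1\) must be killed by \(e^{-w_2d}\) rather than by \(\eps\), which is why \(d_*\) comes last.
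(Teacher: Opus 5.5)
Your proposal is correct and follows the paper's proof essentially step for step. It uses the same decomposition $\card{\cA_n}-\card{\cB_{1,n}}-\card{\cB_{2,n}}$, the same sandwich $Q^{(1)}\setminus Q^{(2)}\subseteq M\subseteq 2Q^{(2)}$ fed into \cref{lem:LSST-3.7}, the same injections into \cref{lem:LSST-3.9,lem:LSST-3.10} via $k_na_d=b_d$, and the same order $r\to\eps_*\to d_*$. The one slip is that $\vol(2Q^{(2)})=2\eps^2e^{2d}$, not $\eps^2e^{2d}$; this only changes the value of $c_+$ and leaves the argument intact.
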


\begin{proof}
Let
\[
 \Lambda=a_{t_{n-1}}u_\kappa\Lambda^\circ,
 \qquad
 \Lambda_1=a_d\Lambda,
 \qquad
 V=\eps^2e^{2d}.
\]
The parent hypothesis and \(\eps\le\eta\) imply
\[
 \Lambda\in\cL_3'\cap\cK_{\eps^2}^*(3).
\]
By \cref{lem:candidate-shell},
\begin{equation*}
 \card{\cA_n(\kappa)}
 =\frac12\card{M\cap\wh{\Lambda_1}},
\end{equation*}
where \(M\) is the shell defined there.  Let \(Q^{(1)}\)
and \(Q^{(2)}\) be the boxes from \cref{lem:box-minima}.  Then
\begin{equation}\label{eq:box-sandwich}
 Q^{(1)}\setminus Q^{(2)}
 \subseteq M
 \subseteq2Q^{(2)},
\end{equation}
and
\[
 \vol Q^{(1)}=\frac12V,
 \qquad
 \vol Q^{(2)}=\frac14V,
 \qquad
 \vol(2Q^{(2)})=2V.
\]
For each of the three boxes in \eqref{eq:box-sandwich}, the
successive-minima bounds are exactly those in \eqref{eq:box-minima}.
Put \(u=|\log\eps|\) and
\(\gamma_w=\min\{w_2,w_1-w_2\}\).  Condition
\eqref{eq:count-condition} implies
\[
 100(u+\log2)\le\gamma_wd.
\]
Therefore, uniformly for the three boxes,
\[
 \lambda_3(Q,\Lambda_1)
 \le 24
 \exp\left[-\left(w_2-\frac{3\gamma_w}{100}\right)d\right],
\]
and
\[
 -\lambda_3(Q,\Lambda_1)\log\lambda_1(Q,\Lambda_1)
 \le K_0(1+d)
 \exp\left[-\left(w_2-\frac{3\gamma_w}{100}\right)d\right].
\]
Here \(K_0>0\) depends only on \(w\).
Since \(w_2-3\gamma_w/100>0\), we may choose \(d_*\) so that both
quantities are at most the constant in \cref{lem:LSST-3.7} whenever
\(d\ge d_*\).

The lattice \(\Lambda_1\) is unimodular, so
\cref{lem:LSST-3.7} and \eqref{eq:box-sandwich} give
fixed constants \(0<A_-<A_+<\infty\) such that
\begin{equation}\label{eq:total-candidates}
 A_-V
 \le
 \card{\cA_n(\kappa)}
 \le
 A_+V.
\end{equation}
For example, before the positive-sign factor is applied, the lower
main-term difference is
\[
 \frac{4}{5\zeta(3)}\,\frac V2
 -\frac{6}{5\zeta(3)}\,\frac V4
 =\frac{V}{10\zeta(3)}>0.
\]

It remains to estimate the two classes of bad candidates.
By \cref{lem:bad-candidate-injection,lem:LSST-3.9,lem:LSST-3.10},
\begin{align}
 \card{\cB_{1,n}(\kappa)}
 &\le 8\sqrt\eps\,V,\\
 \card{\cB_{2,n}(\kappa)}
 &\le 8r\,V.
\end{align}
Indeed,
\[
 \vol M_{\mathbf r_1}=8\eps^2e^{2d}=8V,
 \qquad
 \vol M_{\mathbf r_2}=8\eps^2e^{2d}=8V,
\]
and
\[
 \Lambda_1=a_d\Lambda,
 \qquad
 \Lambda_2=k_na_d\Lambda
 =\diag(e^{(w_1-w_2)d},e^{2w_2d},e^{-d})\Lambda,
\]
are exactly the lattices to which
\cref{lem:LSST-3.9,lem:LSST-3.10} are applied.

The parameters are now fixed in the order
\eqref{eq:parameter-order}.  First choose
\(0<r<\min\{1/2,A_-/32\}\) below the fixed \(s\)-threshold in
\cref{lem:LSST-3.10}.  Then choose
\(0<\eps_*<\min\{r,2^{-1/2}\}\) below the fixed
\(\eps\)-threshold in \cref{lem:LSST-3.9}, and so that
\[
 8\sqrt{\eps_*}\le\frac14A_-.
\]
Finally, enlarge \(d_*\) so that the hypotheses of the primitive
lattice-point estimate, the inequalities
\(r_{i,2}\ge r_{i,1}\ge1\), and the two exponential conditions in
\cref{lem:LSST-3.9,lem:LSST-3.10} all follow from
\eqref{eq:count-condition}.
Then
\[
 \card{\mathcal C_n(\kappa)}
 \ge A_-V-8\sqrt\eps V-8rV
 \ge\frac12A_-V.
\]
The upper bound follows from \eqref{eq:total-candidates}.  Thus one may
take \(c_-=A_-/2\) and \(c_+=A_+\).
\end{proof}

\subsection{Regularity, separation, and singularity}

The child count gives branching but not yet a usable self-affine
structure.  We now verify nesting and shrinking, obtain quantitative
sibling separation, and finally prove that every branch limit gives a
divergent trajectory.

Set
\begin{equation}\label{eq:SL}
 S_n=2\eps_ne^{-w_1t_{n+1}-t_n},
 \qquad
 L_n=2\eps_ne^{-w_2t_{n+1}-t_n}.
\end{equation}
Thus every level \(n\) rectangle has short side \(S_n\) and long
side \(L_n\), and \(S_n\le L_n\).

\begin{proposition}\label{prop:regularity}
Assume that, for every \(n\ge1\),
\begin{enumerate}[label=\textup{(A\arabic*)}]
\item the nesting condition
\begin{equation}\label{eq:regularity-nesting}
 \eps_n\left(\frac12+e^{-d_n-w_2d_{n+1}}\right)
 \le\eps_{n-1}
\end{equation}
holds;
\item the hypotheses of \cref{thm:child-count} hold and
\begin{equation}\label{eq:nonempty-count}
 c_-\eps_n^2e^{2d_n}\ge1;
\end{equation}
\item \(t_n\to\infty\).
\end{enumerate}
Then the construction above defines a regular self-affine structure
\((\cT,\beta)\) in the sense of LSST\@.  More explicitly:
\begin{enumerate}[label=\textup{(\roman*)}]
\item every vertex has a nonempty finite set of children;
\item the rectangle of every nonroot vertex is contained in the
rectangle of its parent;
\item for every branch \((\sigma_n)\),
\(\diam\beta(\sigma_n)\to0\);
\item all level \(n\) rectangles have the common side lengths
\(S_n,L_n\) in \eqref{eq:SL}.
\end{enumerate}
Therefore, every branch determines a unique point of \(F\).
\end{proposition}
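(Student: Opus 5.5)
The plan is to verify the four items one at a time, by induction on the level, and then read off the final assertion from the nested-compact-set principle.

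\emph{Children (i).} Every vertex $\sigma\in\cT_{n-1}$ has a center $\kappa=c(\sigma)$ satisfying
\[
 a_{t_{n-1}}u_\kappa\Lambda^\circ\in\cL_3'\cap\cK_{\eps_{n-1}^2}^*(3).
\]
For the root this is the choice of $\kappa_0$. For $n-1\ge1$ it holds because $\kappa\in\mathcal C_{n-1}(\cdot)$, and the first two defining conditions of $\mathcal C_{n-1}$ are exactly this membership. So \cref{thm:child-count} applies with parent scale $\eta=\eps_{n-1}$; this is where the monotonicity $\eps_n\le\eps_{n-1}$ is used, and the remaining hypotheses are contained in (A2). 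The theorem then gives
\[
 \card{\mathcal C_n(\kappa)}\ge c_-\eps_n^2e^{2d_n}\ge1,
\]
by \eqref{eq:nonempty-count}. Finiteness follows from the upper bound $c_+\eps_n^2e^{2d_n}$, or directly from the discreteness argument of \cref{lem:candidate-shell}.

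\emph{Side lengths (iv).} This is immediate from \eqref{eq:node-rectangle}. The half-widths of $\beta_n(\xi)$ are $\eps_ne^{-w_it_{n+1}-t_n}$, so the sides are as in \eqref{eq:SL}. Since $w_1>w_2$, we have $S_n\le L_n$.

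\emph{Nesting (ii).} Take a child $\theta\in\mathcal I_n(\kappa)$ of a vertex with center $\kappa$ at level $n-1$. In coordinate $i$, the claim $\beta_n(\theta)\subseteq\beta_{n-1}(\kappa)$ requires
\[
 \frac{\eps_n}{2}e^{-w_it_n-t_{n-1}}+\eps_ne^{-w_it_{n+1}-t_n}\le\eps_{n-1}e^{-w_it_n-t_{n-1}}.
\]
Divide by $e^{-w_it_n-t_{n-1}}$. The second term then becomes $\eps_ne^{-w_id_{n+1}-d_n}$, and this is at most $\eps_ne^{-w_2d_{n+1}-d_n}$ because $w_i\ge w_2$. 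The required inequality is therefore exactly \eqref{eq:regularity-nesting}. I expect this to be the only step requiring care: matching the time indices correctly, with $t_n$ in the child region and $t_{n+1}$ in the child rectangle, and noting that $i=2$ is the worst case.

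\emph{Shrinking (iii) and the final assertion.} We have $L_n\le2e^{-t_n}$ because $\eps_n\le1$, and $t_n\to\infty$ by (A3). Hence $\diam\beta(\sigma_n)\le\sqrt2L_n\to0$ uniformly over branches. So (i)--(iii) give regularity in the LSST sense. Along any branch the rectangles are nonempty, compact, nested, and have diameters tending to zero, so their intersection is a single point. That point lies in $F$ by the definition of $\cF(\cT,\beta)$.
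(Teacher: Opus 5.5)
Your proposal is correct and follows essentially the same route as the paper: you get nonempty finite children from \cref{thm:child-count} together with \eqref{eq:nonempty-count}, you reduce coordinatewise nesting to \eqref{eq:regularity-nesting} with $i=2$ as the worst coordinate, and you get shrinking from $L_n\to0$ via $\eps_n\le1$ and $t_n\to\infty$. Your explicit check that each parent lies in $\cL_3'\cap\cK_{\eps_{n-1}^2}^*(3)$, so that \cref{thm:child-count} applies with $\eta=\eps_{n-1}$, spells out what the paper folds into hypothesis (A2).
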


\begin{proof}
By \cref{thm:child-count} and
\eqref{eq:nonempty-count}, every vertex has a finite, nonempty set of
children.  In coordinate \(i\), the displacement of a child center
from its parent center is at most
\(\frac12\eps_ne^{-w_it_n-t_{n-1}}\), while the child half-length is
\[
 \eps_ne^{-w_it_{n+1}-t_n}
 =\eps_ne^{-w_it_n-t_{n-1}}e^{-d_n-w_id_{n+1}}.
\]
The worst coordinate is \(i=2\), so
\eqref{eq:regularity-nesting} places every child rectangle inside its
parent rectangle.

Finally, for every level-\(n\) center \(\theta\),
\[
 \diam\beta_n(\theta)
 \le\sqrt{S_n^2+L_n^2}
 \le\sqrt2L_n
 \le2\sqrt2e^{-(1+w_2)t_n},
\]
because \(\eps_n\le1\).  This tends to zero.  The formulas for the common side lengths, together with
\(S_n\le L_n\), follow from \eqref{eq:node-rectangle} and
\eqref{eq:weight}.
\end{proof}

\begin{lemma}
\label{lem:separation}
Assume that \(n\ge1\) and
\begin{equation}\label{eq:separation-conditions}
 \eps_n\le\frac r8,
 \qquad
 \eps_ne^{w_2(d_n-d_{n+1})}\le\frac r8.
\end{equation}
If \(\theta\ne\theta'\) are two distinct level-\(n\) children of the same
level-\(n-1\) parent, define
\begin{equation}\label{eq:rho-formula}
 \rho_n
 =
 \min\left\{
 1,\,
 \frac{r}{8\eps_{n-1}}
 \min\left(
 e^{-w_1d_n},
 e^{(w_1-w_2)t_n-(1+w_2)d_n}
 \right)
 \right\}.
\end{equation}
Then
\begin{equation}\label{eq:sibling-separation}
 \dist\bigl(\beta_n(\theta),\beta_n(\theta')\bigr)
 \ge S_{n-1}\rho_n.
\end{equation}
Therefore, in the indexing of \cref{thm:LSST-2.1}, distinct
children of a level \(n\) vertex are separated by
\(\rho_{n+1}S_n\).
\end{lemma}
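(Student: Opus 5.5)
The plan is to turn the dual-systole condition \(k_n\Lambda_n(\theta)\in\cK_r^*(3)\) into a lower bound on the gap between the two child centers in one of the two coordinates. Next I subtract the two child half-lengths in that coordinate using \eqref{eq:separation-conditions}. Finally I compare the result with \(S_{n-1}\). Since the rectangles \(\beta_n(\theta),\beta_n(\theta')\) are axis-parallel products, their distance is at least the gap between their projections in either coordinate, so one good coordinate suffices.

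\emph{Step 1: a dual vector from the sibling.} Both children lie in \(\cA_n(\kappa)\). Write \(\ell=\ell_3(\Lambda_n(\theta))\in(1/2,1]\), and let \(v'=(p_1',p_2',q')\in\wh{\Lambda^\circ}\) be the vector with \(a_{t_n}u_{\theta'}v'=q'e^{-t_n}e_3\). As in the proof of \cref{lem:candidate-shell}, \(\theta'=(p_1',p_2')/q'\) and \(e^{t_n}/2<q'\le e^{t_n}\). Then
\[
 y:=a_{t_n}u_\theta v'
 =\bigl(q'e^{w_1t_n}(\theta'_1-\theta_1),\,q'e^{w_2t_n}(\theta'_2-\theta_2),\,q'e^{-t_n}\bigr)\in\Lambda_n(\theta).
\]
Because \(\theta\ne\theta'\), \(y\notin\R e_3\). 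The cross-product trick of \cref{lem:vertical-transference} therefore shows that \(y\times\ell e_3=\ell(y_2,-y_1,0)\) is a nonzero vector of \(\Lambda_n(\theta)^*\). The dual of \(k_n\Lambda_n(\theta)\) is \(k_n^{-\mathsf T}\Lambda_n(\theta)^*\), with \(k_n^{-\mathsf T}=\diag(e^{w_2d_n},e^{-w_2d_n},1)\). So the hypothesis \(k_n\Lambda_n(\theta)\in\cK_r^*(3)\) gives
\[
 \ell\,\bigl\|(e^{w_2d_n}y_2,\,e^{-w_2d_n}y_1)\bigr\|\ge r .
\]
Using \(\ell\le1\) and \(q'\le e^{t_n}\), this forces one of the two alternatives
\[
 |\theta'_1-\theta_1|\ge\tfrac r2e^{-(1+w_1)t_n+w_2d_n}
 \quad\text{or}\quad
 |\theta'_2-\theta_2|\ge\tfrac r2e^{-(1+w_2)t_n-w_2d_n}.
\]

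\emph{Step 2: subtract half-lengths.} The child half-length in coordinate \(i\) is \(\eps_ne^{-w_it_{n+1}-t_n}=\eps_ne^{-(1+w_i)t_n-w_id_{n+1}}\).

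In the first alternative, \(\eps_n\le r/8\) gives
\[
 \eps_ne^{-w_1d_{n+1}}\le\tfrac r8\le\tfrac r8e^{w_2d_n}.
\]
Hence the two half-lengths total at most half the gap, and the coordinate-1 gap between the rectangles is at least \(\frac r4e^{-(1+w_1)t_n+w_2d_n}\).

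In the second alternative, the condition \(\eps_ne^{w_2(d_n-d_{n+1})}\le r/8\) is exactly the statement that
\[
 \eps_ne^{-w_2d_{n+1}}\le\tfrac r8e^{-w_2d_n}.
\]
This again leaves a gap of at least \(\frac r4e^{-(1+w_2)t_n-w_2d_n}\).

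\emph{Step 3: compare with \(S_{n-1}=2\eps_{n-1}e^{-w_1t_n-t_{n-1}}\).} Dividing the two lower bounds by \(S_{n-1}\) and using \(t_{n-1}=t_n-d_n\) gives:
\begin{itemize}
\item in the first case, \(\frac{r}{8\eps_{n-1}}e^{-w_1d_n}\), since \(-d_n+w_2d_n=-w_1d_n\);
\item in the second case, \(\frac{r}{8\eps_{n-1}}e^{(w_1-w_2)t_n-(1+w_2)d_n}\).
\end{itemize}
Each is at least \(\rho_n\) as defined in \eqref{eq:rho-formula}, which proves \eqref{eq:sibling-separation}. The final sentence of the lemma is only the index shift \(n\mapsto n+1\).

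The main obstacle is Step 1: one must identify the correct dual vector and track how \(k_n\) acts on the dual lattice, via \(k_n^{-\mathsf T}\) rather than \(k_n\). This is what produces the asymmetric exponents \(\pm w_2d_n\) that match \eqref{eq:rho-formula}. The remaining steps are bookkeeping.
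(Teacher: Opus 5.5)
Your proposal is correct and follows essentially the paper's proof. The only difference is bookkeeping: the paper transports the vertical vector of \(\theta\) into \(k_n\Lambda_n(\theta')\) via \(G_n=k_na_{t_n}\) and takes the cross product there, while you take it in \(\Lambda_n(\theta)\) and then apply \(k_n^{-\mathsf T}\), which is the same computation since \(A^{-\mathsf T}(u\times v)=Au\times Av\) for \(A\in\SL_3(\R)\); the coordinate alternatives, the half-length subtraction using \eqref{eq:separation-conditions}, and the division by \(S_{n-1}\) then match the paper line for line.
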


\begin{proof}
Put
\[
 G_n=k_na_{t_n}
 =
 \diag\left(
 e^{w_1t_n-w_2d_n},
 e^{w_2t_n+w_2d_n},
 e^{-t_n}
 \right).
\]
Since \(k_ne_3=e_3\) and both children satisfy the first legality
condition, put
\[
 \ell_\theta:=\ell_3\bigl(\Lambda_n(\theta)\bigr),
 \qquad
 \ell_{\theta'}:=\ell_3\bigl(\Lambda_n(\theta')\bigr).
\]
Then \(1/2<\ell_\theta,\ell_{\theta'}\le1\), and
\[
 \ell_\theta e_3\in G_nu_\theta\Lambda^\circ,
 \qquad
 \ell_{\theta'}e_3\in G_nu_{\theta'}\Lambda^\circ.
\]
Write
\[
 y=\theta'-\theta
\]
and transport the first vertical vector to the second child lattice:
\begin{equation}\label{eq:transported-vertical-vector}
 v
 =
 G_nu_y G_n^{-1}(\ell_\theta e_3).
\end{equation}
Indeed, if \(\ell_\theta e_3=G_nu_\theta m\) with
\(m\in\Lambda^\circ\), then
\[
 v
 =
 G_nu_y u_\theta m
 =
 G_nu_{\theta'}m
 \in G_nu_{\theta'}\Lambda^\circ.
\]
Thus \(v\) and \(\ell_{\theta'}e_3\) belong to the unimodular lattice
\[
 \Gamma'=G_nu_{\theta'}\Lambda^\circ=k_n\Lambda_n(\theta').
\]
Since \(\theta\ne\theta'\), these two vectors are linearly independent.
Thus \(v\times \ell_{\theta'}e_3\) is a nonzero element of \((\Gamma')^*\),
because, for every \(z\in\Gamma'\),
\[
 (v\times \ell_{\theta'}e_3)\cdot z
 =\det(v,\ell_{\theta'}e_3,z)\in\Z,
\]
where the last inclusion follows from the unimodularity of
\(\Gamma'\).  The third legality condition
\(\Gamma'\in\cK_r^*(3)\) therefore gives
\begin{equation}
 \norm{v\times \ell_{\theta'}e_3}\ge r.
\end{equation}

Diagonal conjugation in
\eqref{eq:transported-vertical-vector} gives
\[
 v
 =
 \ell_\theta\left(
 -e^{(1+w_1)t_n-w_2d_n}y_1,\,
 -e^{(1+w_2)t_n+w_2d_n}y_2,\,
 1
 \right).
\]
Hence
\begin{align}
 r
 &\le
 \ell_\theta\ell_{\theta'}
 \left[
 e^{2((1+w_1)t_n-w_2d_n)}|y_1|^2
 +
 e^{2((1+w_2)t_n+w_2d_n)}|y_2|^2
 \right]^{1/2}\notag\\
 &\le
 \left[
 e^{2((1+w_1)t_n-w_2d_n)}|y_1|^2
 +
 e^{2((1+w_2)t_n+w_2d_n)}|y_2|^2
 \right]^{1/2}.
 \label{eq:two-coordinate-bound}
\end{align}
It follows that at least one of the following alternatives holds:
\begin{align}
 |y_1|
 &\ge
 \frac r2
 e^{-(1+w_1)t_n+w_2d_n},
 \label{eq:horizontal-center-separation}\\
 |y_2|
 &\ge
 \frac r2
 e^{-(1+w_2)t_n-w_2d_n}.
 \label{eq:vertical-center-separation}
\end{align}
Indeed, if both scaled coordinates were smaller than \(r/2\),
then the right-hand side of \eqref{eq:two-coordinate-bound} would be
smaller than \(r/\sqrt2<r\), a contradiction.

Let
\[
 h_{n,i}=\eps_ne^{-w_it_{n+1}-t_n}
 \qquad(i=1,2)
\]
be the coordinate half-lengths of a level \(n\) rectangle.

Suppose first that \eqref{eq:horizontal-center-separation} holds.
Then
\begin{align*}
 2h_{n,1}
 &=
 2\eps_ne^{-(1+w_1)t_n-w_1d_{n+1}}\\
 &=
 \frac r2e^{-(1+w_1)t_n+w_2d_n}
 \left(
 \frac{4\eps_n}{r}
 e^{-w_1d_{n+1}-w_2d_n}
 \right)\\
 &\le
 \frac12\,
 \frac r2e^{-(1+w_1)t_n+w_2d_n},
\end{align*}
where the last inequality uses \(\eps_n\le r/8\).
Therefore the gap between the two rectangles in the first coordinate
is at least
\begin{equation}\label{eq:first-rectangle-gap}
 \frac r4e^{-(1+w_1)t_n+w_2d_n}.
\end{equation}

If instead \eqref{eq:vertical-center-separation} holds, then
\begin{align*}
 2h_{n,2}
 &=
 2\eps_ne^{-(1+w_2)t_n-w_2d_{n+1}}\\
 &=
 \frac r2e^{-(1+w_2)t_n-w_2d_n}
 \left(
 \frac{4\eps_n}{r}
 e^{w_2(d_n-d_{n+1})}
 \right)\\
 &\le
 \frac12\,
 \frac r2e^{-(1+w_2)t_n-w_2d_n}
\end{align*}
by the second condition in
\eqref{eq:separation-conditions}.  Thus the second-coordinate
gap is at least
\begin{equation}\label{eq:second-rectangle-gap}
 \frac r4e^{-(1+w_2)t_n-w_2d_n}.
\end{equation}

Finally,
\[
 S_{n-1}
 =
 2\eps_{n-1}e^{-w_1t_n-t_{n-1}},
 \qquad
 t_{n-1}=t_n-d_n.
\]
Dividing \eqref{eq:first-rectangle-gap} by \(S_{n-1}\) gives
\[
 \frac{r}{8\eps_{n-1}}e^{-w_1d_n},
\]
while dividing \eqref{eq:second-rectangle-gap} by \(S_{n-1}\)
gives
\[
 \frac{r}{8\eps_{n-1}}
 e^{(w_1-w_2)t_n-(1+w_2)d_n}.
\]
Whenever one of the coordinate gaps is positive, the Euclidean
distance between the two axis-parallel rectangles is at least that
gap.  Taking the smaller of the two possible lower bounds and then
truncating at \(1\) proves
\eqref{eq:sibling-separation}--\eqref{eq:rho-formula}.
\end{proof}

\begin{lemma}\label{lem:singularity-shift}
Assume that \((\eps_n)\) is nonincreasing and
\begin{equation}
 \eps_n\to0,
 \qquad
 d_n\to\infty,
 \qquad
 g_n:=|\log\eps_n|=o(d_n).
\end{equation}
Then every branch limit point \(x\in F\) satisfies
\[
 \min_{v\in a_tu_x\Lambda^\circ\setminus\{0\}}\norm{v}
 \longrightarrow0.
\]
\end{lemma}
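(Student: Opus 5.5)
We show that for every \(t\ge t_0\) the lattice \(a_tu_x\Lambda^\circ\) contains a nonzero vector whose Euclidean norm is bounded by a quantity tending to zero as \(t\to\infty\). The vectors used are the vertical return vectors of the branch through \(x\), transported from the node centers to \(x\).

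\emph{Step 1: the return vector at level \(n\).} Let \(x\in F\) be the limit of a branch \((\sigma_n)\), and let \(\theta_n=c(\sigma_n)\) be its centers. Since \(\Lambda_n(\theta_n)\in\cL_3'\), there is \(m_n\in\Lambda^\circ\) with
\[
a_{t_n}u_{\theta_n}m_n=\ell_ne_3,\qquad \tfrac12<\ell_n\le1.
\]
Because \(x\in\beta_n(\theta_n)\), we have \(|x_i-\theta_{n,i}|\le\eps_ne^{-w_it_{n+1}-t_n}\). Put
\[
v_n(t)=a_tu_xm_n=a_{t-t_n}\,\bigl(a_{t_n}u_{x-\theta_n}a_{t_n}^{-1}\bigr)\ell_ne_3 .
\]
The conjugation computation of \cref{lem:candidate-shell} shows that the inner vector equals
\[
\ell_n\bigl(-e^{(1+w_1)t_n}(x_1-\theta_{n,1}),\,-e^{(1+w_2)t_n}(x_2-\theta_{n,2}),\,1\bigr).
\]
Its first two coordinates are bounded in absolute value by \(\eps_ne^{-w_id_{n+1}}\).

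\emph{Step 2: bounding the vector at time \(t\).} Write \(t=t_n+s\) with \(0\le s\le d_{n+1}\). Then
\[
|v_n(t)_i|\le\eps_ne^{w_i(s-d_{n+1})}\le\eps_n\quad(i=1,2),\qquad |v_n(t)_3|\le e^{-s}.
\]
This vector is small once \(s\) is large, but at the start of the window (\(s\) small) the third coordinate is about \(1\). For \(t\in[t_n,t_{n+1}]\) we therefore use
\[
\min\bigl\{\norm{v_n(t)},\norm{v_{n-1}(t)}\bigr\}.
\]
For \(v_{n-1}\) at time \(t=t_n+s\), the elapsed time since \(t_{n-1}\) is \(d_n+s\), which exceeds the window length \(d_n\). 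Its third coordinate is at most \(e^{-d_n-s}\le e^{-d_n}\). Its first two coordinates are bounded by \(\eps_{n-1}e^{w_i(d_n+s-d_n)}=\eps_{n-1}e^{w_is}\), so they grow with \(s\).

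\emph{Step 3: choosing the crossover point.} Use \(v_{n-1}\) for \(s\le s_0\) and \(v_n\) for \(s\ge s_0\). This requires simultaneously
\[
\eps_{n-1}e^{w_1s_0}\to0\qquad\text{and}\qquad e^{-s_0}\to0 .
\]
Take \(s_0=g_{n-1}/(2w_1)\). Then \(\eps_{n-1}e^{w_1s_0}=\eps_{n-1}^{1/2}\to0\) and \(e^{-s_0}=\eps_{n-1}^{1/(2w_1)}\to0\), because \(g_{n-1}\to\infty\).

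For this choice to be admissible, \(s_0\) must lie inside the window, i.e. \(s_0\le d_{n+1}\) (and \(s_0\le d_n\) for the previous window). This holds for large \(n\) because \(g_{n-1}=o(d_{n-1})\). To compare \(g_{n-1}\) with \(d_{n+1}\) rather than \(d_{n-1}\), it may instead be necessary to use \(g_n=o(d_{n+1})\) together with the monotonicity \(g_{n-1}\le g_n\).

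With this choice, for \(s\ge s_0\) we also have \(\eps_n\to0\), so both branches of the minimum give vectors of norm \(\ll\max\{\eps_{n-1}^{1/2},\eps_{n-1}^{1/(2w_1)},e^{-d_n}\}\), and this bound tends to \(0\).

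\emph{Main obstacle: calibrating the crossover.} The delicate step is making sure the crossover time \(s_0\) fits inside each window. If the comparison of \(g_{n-1}\) with the window lengths turns out awkward, a fallback is to use \(s_0=\sqrt{d_n}\) together with \(\eps_{n-1}e^{w_1\sqrt{d_n}}\); this works when \(g_{n-1}\) is comparable to \(d_n\). In that case the plan is to redo the bookkeeping using only \(g_n=o(d_n)\) and \(\eps_n\to0\).
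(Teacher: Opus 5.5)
Your argument is correct and is essentially the paper's proof: the same two return vectors $m_{n-1},m_n$ of the branch, with a switch after a time proportional to $g_{n-1}$. The paper takes $g_{n-1}/(1+w_1)$, which balances the two bounds at $e^{-g_{n-1}/(1+w_1)}$; your $g_{n-1}/(2w_1)$ works equally well.

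On admissibility, your first justification via $g_{n-1}=o(d_{n-1})$ does not compare $s_0$ with $d_{n+1}$. The paper settles this with $g_{n-1}\le g_{n+1}=o(d_{n+1})$, using monotonicity. In fact the condition is unnecessary: the bound $\eps_{n-1}e^{w_1 s}\le\eps_{n-1}^{1/2}$ for $v_{n-1}$ holds for every $s\le s_0$, whatever the window length. So the requirement $s_0\le d_n$ and the fallback $s_0=\sqrt{d_n}$ can simply be dropped.
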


\begin{proof}
The crossover argument requires the shifted estimate
\begin{equation}\label{eq:shift-needed}
 g_{n-1}=o(d_{n+1}).
\end{equation}
Monotonicity of \(g_n\) gives
\[
 0\le\frac{g_{n-1}}{d_{n+1}}
 \le\frac{g_{n+1}}{d_{n+1}}
 \longrightarrow0,
\]
where the last limit is \(g_k=o(d_k)\) with \(k=n+1\).
Thus \eqref{eq:shift-needed} follows from the stated hypotheses.

Let \((\sigma_n)_{n\ge0}\) be an infinite branch, put
\(\kappa_n=c(\sigma_n)\), and let
\(x\in\bigcap_n\beta(\sigma_n)\).  Since
\(\Lambda_n(\kappa_n)=a_{t_n}u_{\kappa_n}\Lambda^\circ\in\cL_3'\),
its positive primitive vertical vector is
\(v_n:=\ell_3\bigl(\Lambda_n(\kappa_n)\bigr)e_3\).
Choose the corresponding primitive vector \(z_n\in\Lambda^\circ\), so that
\[
a_{t_n}u_{\kappa_n}z_n=v_n.
\]
For \(t\in[t_n,t_{n+1}]\), the vector \(a_tu_xz_n\) has Euclidean norm
\begin{equation}\label{eq:current-vector-bound}
 \ll
 \max\left\{
 \eps_ne^{-w_1(t_{n+1}-t)},
 \eps_ne^{-w_2(t_{n+1}-t)},
 e^{-(t-t_n)}
 \right\}.
\end{equation}
Define the crossover time
\begin{equation}
 t_n'=t_n+\frac{g_{n-1}}{1+w_1}.
\end{equation}
By \eqref{eq:shift-needed}, \(t_n'\le t_{n+1}\) for all large
\(n\).  On \([t_n,t_n']\), use \(a_tu_xz_{n-1}\).  Its norm is
\[
 \ll\max\left\{
 e^{-g_{n-1}/(1+w_1)},e^{-d_n}
 \right\}.
\]
On \([t_n',t_{n+1}]\), use \(a_tu_xz_n\) in
\eqref{eq:current-vector-bound}.  Its norm is
\[
 \ll\max\left\{
 \eps_n,e^{-g_{n-1}/(1+w_1)}
 \right\}.
\]
Both bounds tend to zero because \(\eps_n\to0\),
\(g_{n-1}\to\infty\), and \(d_n\to\infty\).  Mahler's compactness
criterion \cite[Chapter V]{Cassels} then shows that
\(a_tu_x\Lambda^\circ\) diverges, equivalently
\[
 \min_{v\in a_tu_x\Lambda^\circ\setminus\{0\}}\norm{v}\to0.
\]
\end{proof}

\subsection{Hausdorff dimension of the limit set}

The lower bound follows by inserting cumulative branching, side
lengths, and sibling separation into \cref{thm:LSST-2.1}; the remaining
point is to control the crossover between the long and short sides.
For \(\Lambda^\circ=\Z^3\), \cref{lem:singularity-shift} and
\eqref{eq:LSST-dimension} give the reverse inequality.

\begin{proposition}\label{prop:tree-dimension}
Assume that \((\cT,\beta)\) is regular and satisfies the separation
estimate \eqref{eq:sibling-separation}.  Suppose
\begin{equation}
\begin{aligned}
&0<\eps_{n+1}\le\eps_n\le1,
\qquad \eps_n\to0,
\qquad d_n\to\infty,\\
&d_{n+1}=o(t_n),
\qquad g_n=|\log\eps_n|=o(d_n).
\end{aligned}
\end{equation}
and suppose every level \(n-1\) vertex has at least
\begin{equation}\label{eq:Cn}
 C_n=c_-\eps_n^2e^{2d_n}
\end{equation}
children.
Then
\[
 \dim_H F\ge s_w.
\]
If, in addition, \(\Lambda^\circ=\Z^3\), then
\(\dim_H F=s_w\).
\end{proposition}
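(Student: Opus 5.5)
The lower bound is an application of \cref{thm:LSST-2.1} with $C_n$ from \eqref{eq:Cn}, $S_n,L_n$ from \eqref{eq:SL}, and $\rho_n$ from \eqref{eq:rho-formula}. For each fixed $s<s_w$ I will show that the normalized logarithm in the definition of $s_*$ tends to $+\infty$; this gives $s_*\ge s_w>1$. Everything will be bookkept in exponents relative to $t_n$. The hypotheses $g_n=o(d_n)$ and $d_{n+1}=o(t_n)$ make every $\eps$-factor and every single-step factor $e^{O(d_{n+1})}$ negligible against $t_n$.

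The first step is to compute the three ingredients to first order. Since $t_n=t_0+\sum_{k\le n}d_k$ and $g_k=o(d_k)$, we get $\log P_n=2t_n-o(t_n)$. From \eqref{eq:SL}, $\log S_n=-(1+w_1)t_n-w_1d_{n+1}-g_n=-(1+w_1)t_n-o(t_n)$ and $\log L_k=-(1+w_2)t_k-o(t_k)$. For $\rho_i$, the second entry in \eqref{eq:rho-formula} has exponent $(w_1-w_2)t_i-(1+w_2)d_i$, which is positive for large $i$ because $d_i=o(t_i)$. Hence eventually $\rho_i=\min\{1,\tfrac{r}{8\eps_{i-1}}e^{-w_1d_i}\}\ge \tfrac r8e^{-w_1d_i}$, using $\eps_{i-1}\le1$. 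So $\log\rho_i\ge -w_1d_i-O(1)$. The key point is that $\rho_i$ costs only a fixed fraction of $d_i$, and this is what produces the exponent.

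The second step identifies $D_n$. The condition $L_k\ge S_n$ reads $(1+w_2)t_k\le(1+w_1)t_n+o(t_n)$, so $t_{D_n}=\frac{1+w_1}{1+w_2}t_n+o(t_n)$ up to one step; the step error is $O(d_{D_n+1})=o(t_n)$. The cross-scale product then satisfies
\[
\sum_{i=n+1}^{D_n}\bigl(\log C_i+\log\rho_i\bigr)\ge (2-w_1)(t_{D_n}-t_n)-o(t_n)=(2-w_1)\frac{w_1-w_2}{1+w_2}t_n-o(t_n).
\]
Adding up, the numerator is at least
\[
\Bigl[2-s(1+w_1)+(2-w_1)\frac{w_1-w_2}{1+w_2}\Bigr]t_n-o(t_n),
\]
where $\log\rho_{n+1}=O(d_{n+1})=o(t_n)$. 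I expect the bracket to vanish exactly at $s=s_w$; this should be checked against LSST's constant-step calculation, where the same algebra occurs. If the check fails, I would instead use the true $\rho_i$ more carefully. So for $s<s_w$ the numerator is $\ge c(s)t_n$. The denominator $\max\{D_n-n,1\}$ is at most the number of steps between $t_n$ and $t_{D_n}$. Since $d_i\to\infty$, this is $o(t_n)$, so the ratio tends to $+\infty$. This gives $\dim_H F\ge s_*\ge s_w$ via \cref{thm:LSST-2.1}, using regularity from the hypothesis and $\rho_n\le1$.

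For the equality when $\Lambda^\circ=\Z^3$, every $x\in F$ is $w$-singular by \cref{lem:singularity-shift} together with Dani's correspondence. The hypotheses of that lemma are contained in ours. Hence $F\subseteq\Sing_w(2)$, and \eqref{eq:LSST-dimension} gives $\dim_H F\le s_w$. The main obstacle is the crossover bookkeeping in the second step. Pinning down $D_n$ requires $o(t_n)$ uniformity, and it must also be verified that the number of levels $D_n-n$ grows sublinearly in $t_n$ while the numerator grows linearly. Both follow from $d_n\to\infty$ and $d_{n+1}=o(t_n)$, but the error terms must be tracked uniformly in $n$.
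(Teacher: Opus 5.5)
Your proposal is correct and follows the paper's proof essentially step for step. It uses the same application of \cref{thm:LSST-2.1}, the same eventual bound \(\rho_i\ge\frac r8e^{-w_1d_i}\), the same crossover asymptotics \(t_{D_n}/t_n\to(1+w_1)/(1+w_2)\) with \(D_n-n=o(t_n)\), and the same appeal to \cref{lem:singularity-shift} and \eqref{eq:LSST-dimension} for equality.

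The two points you left open close exactly as in the paper. First, since \(2-w_1=1+w_2\), your bracket equals \(1+2w_1-s(1+w_1)=(1+w_1)(s_w-s)\), so it vanishes at \(s=s_w\). Second, the uniformity at index \(D_n\) follows by first reading \(t_{D_n}=O(t_n)\) off the inequality \(L_{D_n}\ge S_n\); the terms \(g_{D_n}\) and \(w_2d_{D_n+1}\) appearing there are nonnegative. This in turn makes \(g_{D_n}\), \(d_{D_n+1}\), \(g_{D_n+1}\) and \(d_{D_n+2}\) all \(o(t_n)\).
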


\begin{proof}
We apply the self-affine dimension estimate in
\cref{thm:LSST-2.1}.
Set \(C_0=1\), \(\rho_0=1\), and
\[
 P_n=\prod_{k=0}^nC_k.
\]
We begin with the cumulative error estimates.  Shifting the hypothesis
\(d_{n+1}=o(t_n)\) by one index gives \(d_n=o(t_{n-1})\).  Since
\(t_{n-1}\le t_n\), it follows that
\begin{equation}\label{eq:dn-small}
 d_n=o(t_n).
\end{equation}
Since \(g_k=o(d_k)\), for every \(\delta>0\) there is \(N_\delta\) such
that \(g_k\le\delta d_k\) for \(k\ge N_\delta\).  Hence
\begin{equation}
 \sum_{k=1}^ng_k=o(t_n).
\end{equation}
Also \(d_k\to\infty\) implies
\begin{equation}
 n=o(t_n).
\end{equation}
Indeed, for any \(M>0\), choose \(N_M\) so that \(d_k\ge M\) for
every \(k\ge N_M\).  Then
\(M(n-N_M)\le t_n-t_{N_M}\).
Using \eqref{eq:Cn},
\begin{align}
 \log P_n
 &=\sum_{k=1}^n
 \bigl(2d_k-2g_k+\log c_-\bigr)\notag\\
 &=2(t_n-t_0)-o(t_n)\notag\\
 &=2t_n-o(t_n),
 \label{eq:log-Pn}
\end{align}

The two side lengths satisfy the exact formulas
\begin{align}
 -\log S_n
 &=(1+w_1)t_n+g_n+w_1d_{n+1}-\log2,
 \label{eq:logS-exact}\\
 -\log L_n
 &=(1+w_2)t_n+g_n+w_2d_{n+1}-\log2.
 \label{eq:logL-exact}
\end{align}
The hypotheses imply \(g_n=o(t_n)\), so
\begin{equation}\label{eq:SL-asymptotics}
 \log S_n=-(1+w_1)t_n+o(t_n),
 \qquad
 \log L_n=-(1+w_2)t_n+o(t_n).
\end{equation}
Because \(\eps_n\) is nonincreasing,
\[
 \frac{L_{k+1}}{L_k}
 =\frac{\eps_{k+1}}{\eps_k}
 e^{-d_{k+1}-w_2d_{k+2}}<1.
\]
Thus \(L_k\) is decreasing.  Define the crossover index
\begin{equation}
 D_n=\max\{k\ge n:L_k\ge S_n\}.
\end{equation}
It is finite, and
\begin{equation}\label{eq:crossover-inequalities}
 L_{D_n}\ge S_n>L_{D_n+1}.
\end{equation}

Using the exact formulas \eqref{eq:logS-exact}--\eqref{eq:logL-exact},
\eqref{eq:crossover-inequalities} is equivalent to
\begin{equation}\label{eq:crossover-errors}
\begin{aligned}
 &(1+w_2)t_{D_n}+g_{D_n}+w_2d_{D_n+1}\\
 &\qquad\le (1+w_1)t_n+g_n+w_1d_{n+1}\\
 &\qquad<(1+w_2)t_{D_n+1}+g_{D_n+1}+w_2d_{D_n+2}.
\end{aligned}
\end{equation}
The first inequality and \(g_n,d_{n+1}=o(t_n)\) give
\(t_{D_n}=O(t_n)\).  Since \(D_n\ge n\to\infty\), the relevant
asymptotic estimates remain valid along the index sequences \(D_n\)
and \(D_n+1\).  In particular,
\[
 g_{D_n}=o(t_n),
 \qquad
 d_{D_n+1}=o(t_n).
\]
Thus \(t_{D_n+1}=O(t_n)\), and similarly
\[
 g_{D_n+1}=o(t_n),
 \qquad
 d_{D_n+2}=o(t_n).
\]
The two sides of \eqref{eq:crossover-errors} now yield
\begin{equation}\label{eq:crossover-ratio}
 \frac{t_{D_n}}{t_n}
 \longrightarrow
 \frac{1+w_1}{1+w_2}.
\end{equation}

Next,
\begin{equation}\label{eq:Dn-minus-n}
 D_n-n=o(t_n).
\end{equation}
Indeed, \eqref{eq:crossover-ratio} gives
\(t_{D_n}-t_n=O(t_n)\).  For any \(M>0\), all sufficiently large
\(d_i\) are at least \(M\), so
\[
 M(D_n-n)
 \le\sum_{i=n+1}^{D_n}d_i
 =t_{D_n}-t_n=O(t_n).
\]
Letting \(M\to\infty\) proves \eqref{eq:Dn-minus-n}.  The same
tail comparison with \(g_i=o(d_i)\) gives
\begin{equation}\label{eq:cross-error-sums}
 \sum_{i=n+1}^{D_n}g_i=o(t_n).
\end{equation}

We now estimate the separation factors.  The quotient of the second
inner term in \eqref{eq:rho-formula} by the first is
\[
 \exp\bigl((w_1-w_2)t_n-2w_2d_n\bigr)\longrightarrow\infty,
\]
by \eqref{eq:dn-small}.  Therefore the first inner term is the
minimum for all large \(n\).  Since \(\eps_{n-1}\le1\),
\begin{equation}\label{eq:rho-lower}
 \rho_n
 \ge\frac r8e^{-w_1d_n},
 \qquad
 \log\rho_n\ge-w_1d_n+\log(r/8).
\end{equation}
Since also \(0<\rho_{n+1}\le1\), we have the two-sided estimate
\[
 -w_1d_{n+1}+\log(r/8)
 \le \log\rho_{n+1}\le0.
\]
Together with \(d_{n+1}=o(t_n)\), this gives
\begin{equation}\label{eq:rho-next-small}
 \log\rho_{n+1}=o(t_n).
\end{equation}
Combining \eqref{eq:Cn}, \eqref{eq:rho-lower},
\eqref{eq:Dn-minus-n}, and \eqref{eq:cross-error-sums}, we obtain
\begin{align}
 \sum_{i=n+1}^{D_n}\log(\rho_iC_i)
 &\ge
 \sum_{i=n+1}^{D_n}
 \bigl((2-w_1)d_i-2g_i+\log(c_-r/8)\bigr)\notag\\
 &=(2-w_1)(t_{D_n}-t_n)-o(t_n).
 \label{eq:cross-product}
\end{align}
This is the cross-scale contribution appearing in
\cref{thm:LSST-2.1}.

Fix \(1<s<s_w\).  Using \eqref{eq:log-Pn},
\eqref{eq:SL-asymptotics}, \eqref{eq:crossover-ratio},
\eqref{eq:rho-next-small}, and \eqref{eq:cross-product}, the
logarithm in the numerator of \cref{thm:LSST-2.1} satisfies
\begin{align*}
&\log P_n+s\log S_n+s\log\rho_{n+1}
 +\sum_{i=n+1}^{D_n}\log(\rho_iC_i)\\
&\qquad\ge
 \left[
 2-s(1+w_1)
 +(2-w_1)\left(
 \frac{1+w_1}{1+w_2}-1
 \right)
 +o(1)
 \right]t_n\\
&\qquad=\left[(1+w_1)(s_w-s)+o(1)\right]t_n.
\end{align*}
Here we used \(2-w_1=1+w_2\), \(w_1+w_2=1\), and
\((1+w_1)s_w=1+2w_1\).  Since \(s<s_w\), the last lower bound
is a positive multiple of \(t_n\) for large \(n\).  Finally,
\eqref{eq:Dn-minus-n} gives \(\max\{D_n-n,1\}=o(t_n)\), and hence
\[
 \frac{
 \log P_n+s\log S_n+s\log\rho_{n+1}
 +\sum_{i=n+1}^{D_n}\log(\rho_iC_i)
 }{\max\{D_n-n,1\}}
 \longrightarrow+\infty.
\]
By \cref{thm:LSST-2.1}, \(\dim_H F\ge s\).  Letting
\(s\uparrow s_w\) proves the lower bound.  If
\(\Lambda^\circ=\Z^3\), then the singularity lemma gives
\(F\subseteq\Sing_w(2)\), and \eqref{eq:LSST-dimension} gives the
reverse bound.
\end{proof}

Thus every schedule satisfying the hypotheses of
\cref{lem:singularity-shift,prop:tree-dimension} produces a limit set
of Hausdorff dimension at least \(s_w\) along which
\(a_tu_x\Lambda^\circ\) diverges.  When
\(\Lambda^\circ=\Z^3\), the limit set is contained in
\(\Sing_w(2)\) and has Hausdorff dimension exactly \(s_w\).  The next
section selects such a schedule while enforcing the prescribed lower
envelope at all intermediate times.

\section{Denominator windows and the global schedule}

Section~3 shows that the LSST branching exponent is preserved for every
variable schedule satisfying the conditions stated there.  We now choose
such a schedule so that the prescribed lower envelope holds throughout
every return-time interval.

\subsection{Denominator localization}

The next lemma locates the third coordinate of any lattice vector whose
balance time lies between two consecutive return times \(t_n\) and
\(t_{n+1}\) and whose balanced minimum is too small.  Its hypotheses depend only on the level-\(n\) lattice
\(\Lambda_n(\kappa)\), not on the preceding branch.

\begin{lemma}\label{lem:denominator-window}
Let \(n\ge0\).  Suppose
\[
 \Lambda_n(\kappa)=a_{t_n}u_\kappa\Lambda^\circ
 \in\cL_3'\cap\cK_{\eps_n^2}^*(3),
 \qquad 0<\eps_n\le1,
 \qquad x\in\beta_n(\kappa).
\]
Let
\[
v=(p_1,p_2,q)\in\Lambda^\circ,\qquad
q>0,\qquad
R_x(v)>0,
\]
and let \(\alpha_n>0\).  Suppose
\[
\tau_x(v)\in[t_n,t_{n+1}],
\qquad
m_x(v)<\log\alpha_n.
\]
Then
\begin{equation}
12^{-1/w_2}\eps_n^{2/w_2}e^{t_n}
\le q
<
\alpha_ne^{t_{n+1}}.
\end{equation}
\end{lemma}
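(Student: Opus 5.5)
The upper bound follows from the definitions alone. The lower bound follows from \cref{lem:vertical-transference} applied to the level-\(n\) lattice: a denominator that is too small would produce a nonzero vector of \(\Lambda_n(\kappa)\) shorter than its systole bound.

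\emph{Upper bound.} By definition of the balance time and balanced minimum,
\[
 \tau_x(v)+m_x(v)=\log q .
\]
Hence \(\tau_x(v)\le t_{n+1}\) and \(m_x(v)<\log\alpha_n\) give \(\log q<t_{n+1}+\log\alpha_n\), which is \(q<\alpha_ne^{t_{n+1}}\). This is the only place the hypothesis on \(m_x(v)\) is used.

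\emph{Lower bound.} The constraint on \(q\) from below should come from the systole of \(\Lambda_n(\kappa)\).
\begin{itemize}
\item The hypothesis \(\Lambda_n(\kappa)\in\cL_3'\cap\cK_{\eps_n^2}^*(3)\) and \cref{lem:vertical-transference} with \(\rho=\eps_n^2\) give \(\lambda_1(B_1,\Lambda_n(\kappa))\ge\eps_n^2/2\). So every nonzero vector of \(\Lambda_n(\kappa)\) has Euclidean norm at least \(\eps_n^2/2\).
\item Argue by contradiction. Put \(y=qe^{-t_n}\) and suppose \(y^{w_2}<\eps_n^2/12\); in particular \(y<1\).
\item Since \(\tau_x(v)\ge t_n\), we have \(R_x(v)=qe^{-2\tau_x(v)}\le qe^{-2t_n}\). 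Hence
\[
 \abs{qx_i-p_i}\le (qe^{-2t_n})^{w_i}\qquad(i=1,2).
\]
\item Consider the vector \(a_{t_n}u_\kappa v\in\Lambda_n(\kappa)\), which is nonzero because \(q>0\). Its third coordinate is \(y\). For its \(i\)-th coordinate, use the triangle inequality \(\abs{q\kappa_i-p_i}\le\abs{qx_i-p_i}+q\abs{x_i-\kappa_i}\). Since \(x\in\beta_n(\kappa)\), we have \(\abs{x_i-\kappa_i}\le\eps_ne^{-w_it_{n+1}-t_n}\). Combining,
\[
 e^{w_it_n}\abs{q\kappa_i-p_i}\le y^{w_i}+\eps_n y\,e^{-w_id_{n+1}}\le y^{w_i}+y .
\]
\item Because \(y<1\) and \(w_2\le w_1<1\), each coordinate is at most \(2y^{w_2}\). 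So the Euclidean norm is at most
\[
 2\sqrt3\,y^{w_2}<2\sqrt3\,\eps_n^2/12<\eps_n^2/2,
\]
which contradicts the systole bound.
\end{itemize}
Therefore \(y^{w_2}\ge\eps_n^2/12\), that is, \(q\ge12^{-1/w_2}\eps_n^{2/w_2}e^{t_n}\).

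The only delicate point is the bookkeeping in the lower bound. The perturbation from \(\kappa\) to \(x\) must be absorbed using the rectangle size, and the root \(e^{-w_it_{n+1}}\) there is what makes the error \(\ll y\) uniformly in \(d_{n+1}\). The argument uses \(\tau_x(v)\ge t_n\) for the approximation quality, and \(y<1\) to compare all powers of \(y\) with \(y^{w_2}\).
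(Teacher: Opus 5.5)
Your proof is correct and follows the paper's argument. The upper bound comes from $m_x(v)+\tau_x(v)=\log q$. The lower bound comes from the systole bound of \cref{lem:vertical-transference}, together with $R_x(v)\le qe^{-2t_n}$ in the regime $y=qe^{-t_n}<1$. The one cosmetic difference is that you bound the vector $a_{t_n}u_\kappa v\in\Lambda_n(\kappa)$ directly, absorbing the shift from $\kappa$ to $x$ by the triangle inequality; the paper instead transfers the systole bound to $a_{t_n}u_x\Lambda^\circ$ through the unipotent matrix, whose inverse has operator norm at most $1+\sqrt2$.
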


\begin{proof}
Define
\[
 \delta_i=e^{(1+w_i)t_n}(x_i-\kappa_i)
 \qquad(i=1,2),
\]
and write
\[
 a_{t_n}u_x\Lambda^\circ=
 \begin{pmatrix}
 1&0&-\delta_1\\
 0&1&-\delta_2\\
 0&0&1
 \end{pmatrix}
 \Lambda_n(\kappa).
\]
Since \(x\in\beta_n(\kappa)\),
\[
 (\delta_1^2+\delta_2^2)^{1/2}
 \le\sqrt2\,\eps_ne^{-w_2d_{n+1}}
 \le\sqrt2.
\]
The inverse of the displayed unipotent matrix therefore has operator
norm at most \(1+\sqrt2\).  Together with
\cref{lem:vertical-transference}, this gives
\begin{equation}\label{eq:node-systole}
 \lambda_1(B_1,a_{t_n}u_x\Lambda^\circ)
 \ge\frac{\eps_n^2}{2(1+\sqrt2)}.
\end{equation}

Put \(y=qe^{-t_n}\).  Since \(\tau_x(v)\ge t_n\),
\[
R_x(v)\le qe^{-2t_n}.
\]
Thus
\[
e^{w_it_n}\abs{qx_i-p_i}
\le
y^{w_i}.
\]
If \(0<y\le1\), then
\[
\norm{a_{t_n}u_xv}
\le
\sqrt{y^{2w_1}+y^{2w_2}+y^2}
\le
\sqrt3\,y^{w_2}.
\]
Together with \eqref{eq:node-systole}, this gives
\[
 y\ge
 \left(\frac{1}{2\sqrt3(1+\sqrt2)}\right)^{1/w_2}
 \eps_n^{2/w_2}
 \ge12^{-1/w_2}\eps_n^{2/w_2}.
\]
If \(y\ge1\), the last inequality follows directly from
\(0<\eps_n\le1\).  This proves the lower bound.

By the definitions of \(m_x(v)\) and \(\tau_x(v)\),
\[
 m_x(v)+\tau_x(v)=\log q.
\]
Consequently,
\[
 \log q<\log\alpha_n+t_{n+1},
\]
which proves the strict upper bound.
\end{proof}

\subsection{The global schedule}

The schedule must satisfy the variable-step tree hypotheses and
simultaneously make every denominator window empty.  The hierarchy
\(H^{1/4}\ll H^{1/2}\ll H\) as \(H\to\infty\), together with the
\(1\)-Lipschitz property of \(\psi\), makes these requirements
compatible; the auxiliary step from \(0\) to \(t_0\) constructs only
the root.

\begin{proposition}\label{prop:global-scheduling}
Let \(T_*\ge0\) and \(0<\eta\le1\).  Suppose that
\[
 \Lambda^\circ\in\cL_3'\cap\cK_{\eta^2}^*(3),
\]
and that
\[
 \psi:[T_*,\infty)\to(0,\infty)
\]
is nondecreasing, \(1\)-Lipschitz, and divergent.  For every
sufficiently large \(T\), with the threshold allowed to depend on
\(\psi\), set \(t_0=T\) and, for \(n\ge0\), define
\begin{align}
 H_n&:=\psi(t_n),\qquad
 d_{n+1}:=H_n^{1/2},\qquad
 t_{n+1}:=t_n+d_{n+1},
 \label{eq:global-schedule}\\
 \eps_n&:=e^{-H_n^{1/4}},\qquad
 \alpha_n:=e^{-H_n}.
 \label{eq:global-eps-alpha}
\end{align}
One may apply \cref{thm:child-count} once to the auxiliary parent
\((0,\Lambda^\circ)\) and choose a center \(\kappa_0\) such that
\[
 a_Tu_{\kappa_0}\Lambda^\circ
 \in\cL_3'\cap\cK_{\eps_0^2}^*(3).
\]
Then the recursion can be carried out at every level
\(n\ge1\), and every level \(n-1\) parent satisfies
\[
 c_-\eps_n^2e^{2d_n}
 \le \card{\mathcal C_n(\kappa)}
 \le c_+\eps_n^2e^{2d_n}
 \qquad(n\ge1).
\]
The resulting word tree is nested, nonempty, regular, and satisfies
the separation estimate of \cref{lem:separation}.

For every \(n\ge0\),
\begin{equation}\label{eq:global-empty-ineq}
 H_n
 >
 H_n^{1/2}+\frac2{w_2}H_n^{1/4}
 +\frac{\log12}{w_2}.
\end{equation}
Equivalently,
\[
 12^{-1/w_2}\eps_n^{2/w_2}e^{t_n}
 >
 \alpha_ne^{t_{n+1}},
\]
so the interval in \cref{lem:denominator-window} is empty.  Moreover,
\[
 \eps_n\downarrow0,
 \qquad d_n\to\infty,
 \qquad
 d_{n+1}=o(t_n),
 \qquad
 |\log\eps_n|=o(d_n).
\]
Consequently, all the hypotheses of
\cref{lem:singularity-shift,prop:tree-dimension} hold.
All fixed thresholds depend only on \(w\) and \(\eta\); the initial
time may also depend on \(\psi\).
\end{proposition}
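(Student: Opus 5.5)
The proof is a verification that the explicit schedule \eqref{eq:global-schedule}--\eqref{eq:global-eps-alpha} meets every hypothesis collected in Section~3, once \(T\) is large. The key observation is that every quantity is controlled by a single parameter \(H_n=\psi(t_n)\). This parameter tends to infinity, because \(t_n\ge T+n\min_k d_k\) and \(\psi\) diverges. It also grows slowly: the \(1\)-Lipschitz property gives
\[
 H_n\le H_{n+1}\le H_n+H_n^{1/2}.
\]
So \(H_{n+1}^{1/2}\le H_n^{1/2}+\tfrac12\) and \(H_{n+1}^{1/4}\le H_n^{1/4}+o(1)\).

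\emph{Step 1: pointwise asymptotics.} We first record the elementary asymptotics.
\begin{itemize}[label=\textendash]
\item \(\eps_n\downarrow0\) because \(H_n\) is nondecreasing and divergent.
\item \(d_n=H_{n-1}^{1/2}\to\infty\).
\item \(|\log\eps_n|=H_n^{1/4}\le(H_{n-1}+H_{n-1}^{1/2})^{1/4}=o(H_{n-1}^{1/2})=o(d_n)\).
\item \(d_{n+1}=o(t_n)\). Since \(\psi(t)\le\psi(T_*)+(t-T_*)\), we have \(H_n=O(t_n)\), so \(H_n^{1/2}=o(t_n)\).
\end{itemize}
Inequality \eqref{eq:global-empty-ineq} holds once \(H_n\ge\psi(T)\) is large. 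Taking logarithms shows that it is exactly the reversed window inequality: \(\log\) of the left side is \(-\tfrac2{w_2}H_n^{1/4}-\tfrac{\log 12}{w_2}+t_n\), and \(\log\) of the right side is \(-H_n+t_n+H_n^{1/2}\).

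\emph{Step 2: tree hypotheses at each level.} For \(n\ge1\) we must check the following.
\begin{itemize}[label=\textendash]
\item The count condition \eqref{eq:count-condition}: its left side is \(e^{100(H_n^{1/4}+\log2)}\) and its right side is \(e^{\gamma_w H_{n-1}^{1/2}}\). Using \(H_n^{1/4}\le H_{n-1}^{1/4}+1\), it holds for \(H_{n-1}\) large.
\item \(d_n\ge d_*\) and \(\eps_n\le\eps_*\) hold once \(T\) is large.
\item \(c_-\eps_n^2e^{2d_n}\ge1\) holds since \(2H_{n-1}^{1/2}-2H_n^{1/4}\to\infty\).
\item Nesting \eqref{eq:regularity-nesting}: since \(\eps_n\le\eps_{n-1}\), it suffices that \(e^{-d_n-w_2d_{n+1}}\le\tfrac12\).
\item Separation \eqref{eq:separation-conditions}: \(d_n-d_{n+1}\le0\), so both conditions reduce to \(\eps_n\le r/8\).
\end{itemize}
The parent-scale hypothesis of \cref{thm:child-count} is propagated inductively. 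Every child \(\theta\) satisfies \(\Lambda_n(\theta)\in\cL_3'\cap\cK^*_{\eps_n^2}(3)\), so it serves as a parent with \(\eta=\eps_n\ge\eps_{n+1}\). At the root level \(n=0\), we apply \cref{thm:child-count} with \(t_{-1}=0\), \(\kappa=0\), parent lattice \(\Lambda^\circ\in\cL_3'\cap\cK^*_{\eta^2}(3)\), step \(d=T\), and scale \(\eps_0\). The requirement \(\eps_0\le\eta\) holds for large \(T\), and \eqref{eq:count-condition} becomes \((2e^{\psi(T)^{1/4}})^{100}\le e^{\gamma_wT}\), which holds because \(\psi(T)\le\psi(T_*)+T\). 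This yields a nonempty child set, and any child furnishes \(\kappa_0\).

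\emph{Step 3: conclusion.} \Cref{prop:regularity} now gives a regular, nested, nonempty tree with the stated child counts. \Cref{lem:separation} gives the separation estimate. Step~1 supplies the hypotheses of \cref{lem:singularity-shift,prop:tree-dimension}.

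The main obstacle is uniformity. Every "for \(H\) large" condition must be guaranteed at all levels by a single choice of \(T\), and the shift between \(H_{n-1}\) (which controls \(d_n\)) and \(H_n\) (which controls \(\eps_n\)) must not break the count condition. Both issues are resolved by the increment bound \(H_n\le H_{n-1}+H_{n-1}^{1/2}\) together with monotonicity, \(H_n\ge\psi(T)\). Every condition then becomes an inequality in the single variable \(H_{n-1}\ge\psi(T)\) that holds beyond a threshold depending only on \(w\), \(\eta\), and the constants \(r,\eps_*,d_*\).
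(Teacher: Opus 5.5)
Your proposal is correct and follows essentially the same route as the paper: you seed the root with one application of \cref{thm:child-count} to the auxiliary parent \((0,\Lambda^\circ)\), and you use the Lipschitz increment bound \(H_{n-1}\le H_n\le H_{n-1}+H_{n-1}^{1/2}\) to reduce the counting, non-extinction, nesting, separation and empty-window conditions to a single threshold on \(H_{n-1}\ge\psi(T)\) (for \(\eps_n\le r/8\) and the window inequality, on \(H_n\ge\psi(T)\)), with monotonicity of \(d_n\) and \(\eps_n\) handling nesting and separation. The only differences are cosmetic: you use the additive bound \(H_n^{1/4}\le H_{n-1}^{1/4}+1\) where the paper uses \(H_n^{1/4}\le 2^{1/4}H_{n-1}^{1/4}\), and at the root you leave \(T\ge d_*\), \(\eps_0\le\eps_*\) and \(\log c_-+2T-2\psi(T)^{1/4}\ge0\) implicit, though all three hold for large \(T\).
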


\begin{proof}
Put
\[
 \gamma_w=\min\{w_2,w_1-w_2\}>0.
\]
Choose \(H_*=H_*(w,\eta)\) sufficiently large that, whenever
\(H\ge H_*\),
\[
\begin{gathered}
 H^{1/2}\ge d_*,
 \qquad
 e^{-H^{1/4}}\le\min\{\eta,\eps_*,r/8\},\\
 100\bigl(\log2+2^{1/4}H^{1/4}\bigr)
 \le\gamma_wH^{1/2},\\
 \frac12+e^{-(1+w_2)H^{1/2}}\le1,
 \qquad
 \log c_-+2H^{1/2}-2^{5/4}H^{1/4}\ge0,\\
 H>H^{1/2}+\frac2{w_2}H^{1/4}
 +\frac{\log12}{w_2}.
\end{gathered}
\]
Such a choice exists because the \(H^{1/4}\)-terms are
\(o(H^{1/2})\), and all terms on the right of the last inequality are
\(o(H)\).

We next choose the auxiliary seed time.  The \(1\)-Lipschitz property
gives, for \(T\ge T_*\),
\[
 \psi(T)
 \le
 \psi(T_*)+T-T_*
 =T+\psi(T_*)-T_*.
\]
Therefore,
\[
 \psi(T)^{1/4}
 \le\bigl(T+\psi(T_*)-T_*\bigr)^{1/4}
 =o(T).
\]
Since \(\psi(T)\to\infty\), we may take \(T\) so large that
\(\psi(T)\ge H_*\), \(T\ge d_*\), and
\begin{equation}\label{eq:seed-conditions}
 100\bigl(\log2+\psi(T)^{1/4}\bigr)\le\gamma_wT,
 \qquad
 \log c_-+2T-2\psi(T)^{1/4}\ge0.
\end{equation}
The choice of \(H_*\) also gives
\(e^{-\psi(T)^{1/4}}\le\min\{\eta,\eps_*\}\).  Thus the auxiliary parent,
which has time \(0\), center \(0\), lattice \(\Lambda^\circ\), and
parent scale \(\eta\),
satisfies all the hypotheses of \cref{thm:child-count}.  Indeed,
\[
 \Lambda^\circ\in\cL_3'\cap\cK_{\eta^2}^*(3),
\]
the first inequality in \eqref{eq:seed-conditions} is the counting
condition, and the second makes the lower child count at least one.
We may therefore choose a child \(\kappa_0\), for which
\[
 a_Tu_{\kappa_0}\Lambda^\circ
 \in\cL_3'\cap\cK_{\eps_0^2}^*(3)
\]
as required.  We now use \(\kappa_0\) as the center of the root at time
\(t_0=T\).

With \(t_0=T\), define the remaining parameters by
\eqref{eq:global-schedule}--\eqref{eq:global-eps-alpha}.
For every \(n\ge1\), monotonicity and the \(1\)-Lipschitz property
give
\begin{equation}
 0\le H_n-H_{n-1}
 \le t_n-t_{n-1}
 =d_n
 =H_{n-1}^{1/2}.
\end{equation}
Hence \(H_n\), \(d_{n+1}\), and \(H_n^{1/4}\) are nondecreasing,
while \(\eps_n\) is nonincreasing.  Since
\(H_{n-1}\ge H_0\ge H_*\ge1\),
\begin{equation}\label{eq:H-quarter-uniform}
 H_n^{1/4}
 \le
 (H_{n-1}+H_{n-1}^{1/2})^{1/4}
 \le
 2^{1/4}H_{n-1}^{1/4}
 =
 2^{1/4}d_n^{1/2}.
\end{equation}
Also \(d_n\ge d_1=H_0^{1/2}\).

At recursive level \(n\), the parent legality condition gives
\[
 a_{t_{n-1}}u_\kappa\Lambda^\circ
 \in\cL_3'\cap\cK_{\eps_{n-1}^2}^*(3).
\]
Thus \cref{thm:child-count} is applied with parent scale
\(\eta=\eps_{n-1}\); the inequality \(\eps_n\le\eps_{n-1}\) supplies
its scale hypothesis.

The choice of \(H_*\) and \eqref{eq:H-quarter-uniform} imply the
three recursive requirements
\[
\begin{gathered}
 d_n\ge d_*,
 \qquad
 \eps_n\le\min\{\eps_*,r/8\},\\
 100\bigl(\log2+|\log\eps_n|\bigr)\le\gamma_wd_n,
 \qquad
 \log c_-+2d_n-2|\log\eps_n|\ge0.
\end{gathered}
\]
Indeed, the scale condition in the choice of \(H_*\) gives
\[
 \eps_n\le\min\{\eps_*,r/8\},
\]
and monotonicity gives \(\eps_n\le\eps_{n-1}\).
Using \eqref{eq:H-quarter-uniform} and the counting condition in the
choice of \(H_*\),
\[
 100\bigl(\log2+|\log\eps_n|\bigr)
 =
 100\bigl(\log2+H_n^{1/4}\bigr)
 \le\gamma_wd_n,
\]
which is equivalent to the counting hypothesis of
\cref{thm:child-count}.

Nesting and separation are then automatic.  Since
\(d_{n+1}\ge d_n\), the choice of \(H_*\), applied with
\(H=H_{n-1}\), yields
\[
 \frac12+e^{-d_n-w_2d_{n+1}}
 \le
 \frac12+e^{-(1+w_2)d_n}
 \le1.
\]
Together with \(\eps_n\le\eps_{n-1}\), this proves
the nesting condition.  The same monotonicity gives
\[
 \eps_ne^{w_2(d_n-d_{n+1})}
 \le\eps_n\le r/8,
\]
which gives both separation requirements.

Finally, by \eqref{eq:H-quarter-uniform},
\begin{align*}
 \log\left(
 c_-\eps_n^2e^{2d_n}
 \right)
 &=
 \log c_-+2d_n-2H_n^{1/4}\\
 &\ge
 \log c_-
 +2d_n-2^{5/4}d_n^{1/2}
 \ge0
\end{align*}
by the choice of \(H_*\) with \(H=H_{n-1}\).  This is the
non-extinction requirement.  Moreover,
\[
 t_n=T+\sum_{j=1}^n d_j
 \ge T+nH_*^{1/2}\longrightarrow\infty.
\]
Consequently, the nesting argument and
\cref{thm:child-count,prop:regularity,lem:separation} apply at every
recursive level \(n\ge1\), including the first.

For \(n\ge0\), the last condition in the choice of \(H_*\), applied
to \(H=H_n\), gives
\[
 H_n
 >
 H_n^{1/2}
 +\frac2{w_2}H_n^{1/4}
 +\frac{\log12}{w_2},
\]
which is \eqref{eq:global-empty-ineq}.  Exponentiating the equivalent
logarithmic inequality gives
\[
 12^{-1/w_2}\eps_n^{2/w_2}e^{t_n}
 >
 \alpha_ne^{t_{n+1}},
\]
so the exact denominator interval is empty beginning with the first
return-time interval \([t_0,t_1]\).

It remains to prove the asymptotic assertions.  The Lipschitz bound
gives, for \(t_n\ge T_*\),
\[
 H_n=\psi(t_n)
 \le t_n+\psi(T_*)-T_*.
\]
Therefore
\[
 \frac{d_{n+1}}{t_n}
 =
 \frac{H_n^{1/2}}{t_n}
 \longrightarrow0.
\]
Finally, \eqref{eq:H-quarter-uniform} gives
\[
 \frac{|\log\eps_n|}{d_n}
 =
 \frac{H_n^{1/4}}{H_{n-1}^{1/2}}
 \le
 2^{1/4}H_{n-1}^{-1/4}
 \longrightarrow0.
\]
Since \(t_n\to\infty\) and \(\psi(t)\to\infty\), one also has
\(\eps_n\to0\) and \(d_n\to\infty\).  This proves the remaining
assertions.
\end{proof}

The global schedule now has both required properties:
it satisfies all hypotheses of the variable-step tree and makes every
denominator window empty.  The proof of the main theorem is therefore
a direct combination of Sections~2--4.

\section{Proof of the main theorem}

\begin{proof}[Proof of \cref{thm:main}]
Regularize \(f\) using \cref{lem:regularization}, obtaining a
nondecreasing, \(1\)-Lipschitz function \(\psi\) with
\(\psi(t)\to\infty\) such that
every \(x\in\Sing_w(2)\) satisfying
\[
W_x(t)\ge-\psi(t)
\quad\text{for all sufficiently large }t
\]
belongs to \(E_{f,w}\).
Since \(\psi(t)\to\infty\), choose \(T_*\ge0\) so that
\[
\psi(t)>0\qquad(t\ge T_*).
\]
Run the construction of Sections~3 and~4 with
\(\Lambda^\circ=\Z^3\) and \(\eta=1\).  Apply
\cref{prop:global-scheduling} to the restriction of \(\psi\) to
\([T_*,\infty)\), and choose \(T\), the root center \(\kappa_0\), and
all recursive parameters.  Define the rooted word tree and its limit
set \(F\) as in Section~3.  The scheduling proposition and
\cref{thm:child-count,prop:regularity,lem:separation}
show that this is a regular self-affine structure from its first
recursive level.  By
\cref{lem:singularity-shift,prop:tree-dimension},
\[
F\subseteq\Sing_w(2),
\qquad
\dim_H F=s_w.
\]
Every nonzero lattice vector with nonzero third coordinate determines
at most one \(x\) for which
\(u_x\Z^3\cap\R e_3\ne\{0\}\).  This exceptional set is therefore
countable.  Removing it does not change the dimension of \(F\).  Fix
\(x\in F\) outside this set, and let
\(v=(p_1,p_2,q)\in\wh{\Z^3}\) have \(q>0\) and
\(\tau_x(v)\ge t_0\).  The half-open intervals
\([t_n,t_{n+1})\), \(n\ge0\), partition \([t_0,\infty)\), so there
is a unique \(n\ge0\) with
\[
\tau_x(v)\in[t_n,t_{n+1}).
\]
If
\[
m_x(v)<-\psi(\tau_x(v)),
\]
then monotonicity gives
\[
m_x(v)<-\psi(t_n)=\log\alpha_n.
\]
The point \(x\) lies in its level \(n\) branch rectangle, so
\cref{lem:denominator-window} yields
\[
12^{-1/w_2}\eps_n^{2/w_2}e^{t_n}
\le q<\alpha_ne^{t_{n+1}}.
\]
This contradicts \cref{prop:global-scheduling}.  Therefore
\[
m_x(v)\ge-\psi(\tau_x(v))
\]
for every \(v=(p_1,p_2,q)\in\wh{\Z^3}\) with \(q>0\) and
\(\tau_x(v)\ge t_0\).  Hence
\cref{prop:eventual-balance} gives
\[
W_x(t)\ge-\psi(t)
\quad\text{for all sufficiently large }t.
\]
Together with \(x\in F\subseteq\Sing_w(2)\),
\cref{lem:regularization} gives \(x\in E_{f,w}\).  Since only a
countable subset of \(F\) was removed, the preceding inclusion and
dimension calculation give
\[
\dim_H E_{f,w}\ge s_w.
\]
The reverse inequality follows from
\(E_{f,w}\subseteq\Sing_w(2)\) and
\eqref{eq:LSST-dimension}.
\end{proof}

\section{Weighted uniform approximation}
\label{sec:uniform-approximation}

This section proves \cref{thm:subpower-uniform-rate}.  We first derive
from \cref{thm:main} a general rate-avoidance statement, which gives
the dimension assertion for the complement.  We then prove the
uniform-approximation assertion separately, using the Section~3 tree
with a power schedule and without the denominator-window argument of
Section~4.

For \(Q\ge1\), define
\[
 \Delta_x(Q)
 =\min_{\substack{v=(p_1,p_2,q)\in\Z^3\\1\le q\le Q}}R_x(v),
\]
and, for a positive function \(\Phi:[1,\infty)\to(0,\infty)\), let
\[
 \operatorname{UA}_w(\Phi)
 =\left\{
 x\in\R^2:
 \Delta_x(Q)\le\Phi(Q)
 \text{ for all sufficiently large }Q
 \right\}.
\]
Here \(\operatorname{UA}_w(\Phi)\) is the
\(m=2,n=1,\alpha=w,\beta=(1)\) case of KMWW's weighted definition
\cite[(1.12)]{KleinbockMoshchevitinWarrenWeiss}; see also their
unweighted uniform/singular implication
\cite[(1.3)]{KleinbockMoshchevitinWarrenWeiss}.
In this weighted setting, the same elementary argument gives
\[
 \Sing_w(2)
 =\left\{x\in\R^2:Q\Delta_x(Q)\longrightarrow0\right\},
 \qquad
 Q\Phi(Q)\longrightarrow0
 \Longrightarrow
 \operatorname{UA}_w(\Phi)\subseteq\Sing_w(2).
\]
Indeed, applying the uniform condition at height \(Q/2\) handles the
difference between \(q\le Q\) here and \(q<Q\) in the definition of
singularity, while \(Q\Phi(Q/2)\to0\) whenever \(Q\Phi(Q)\to0\).
We use the second implication below.

For the subpower rates in \cref{thm:subpower-uniform-rate}, we use
\[
 \Phi_{\nu,\mu}(Q)
 =Q^{-1}\exp\!\left(-\mu(\log Q)^\nu\right)
 \qquad(Q\ge e),
\]
with any positive nonincreasing extension to \([1,e]\).  The choice of
extension does not affect an eventual uniform approximation condition.

\begin{corollary}\label{cor:uniform-rate-avoidance}
Let \(\Phi:[1,\infty)\to(0,\infty)\) be nonincreasing and satisfy
\[
 Q\Phi(Q)\longrightarrow0.
\]
Then
\[
 \dim_H
 \left\{
 x\in\Sing_w(2):
 \Delta_x(Q)\ge2\Phi(Q)
 \text{ for all sufficiently large }Q
 \right\}
 =s_w.
\]
In particular,
\[
 \dim_H\left(\Sing_w(2)\setminus\operatorname{UA}_w(\Phi)\right)
 =s_w.
\]
\end{corollary}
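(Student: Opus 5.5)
We reduce the statement to \cref{thm:main} by choosing a lower envelope \(f\) for which the lower bound \(W_x(t)\ge\log f(t)\) forces \(\Delta_x(Q)\ge2\Phi(Q)\) for all large \(Q\). The upper bound is immediate: the set is contained in \(\Sing_w(2)\), which has dimension \(s_w\) by \eqref{eq:LSST-dimension}. Moreover, \(\Delta_x(Q)\ge2\Phi(Q)>\Phi(Q)\) eventually forces \(x\notin\operatorname{UA}_w(\Phi)\), so the second assertion follows from the first.

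For the lower bound, relate \(\Delta_x\) to \(W_x\). Fix \(Q\) large and put \(t=\tfrac12\log Q+\tfrac12\log(1/\Delta_x(Q))\)-type balance. More directly, take any \(v=(p_1,p_2,q)\) with \(1\le q\le Q\), and set \(t\) by \(e^{t}=Q\,g(Q)\) with \(g\) to be chosen. Then
\[
 e^{W_x(t)}\le\max\{e^tR_x(v),\,e^{-t}q\}\le\max\{e^t\Delta_x(Q),\,g(Q)^{-1}\},
\]
taking \(v\) to be a minimizer of \(\Delta_x(Q)\). If \(\Delta_x(Q)<2\Phi(Q)\), this gives \(e^{W_x(t)}\le\max\{2Qg(Q)\Phi(Q),g(Q)^{-1}\}\).

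Choose \(g(Q)=(Q\Phi(Q))^{-1/2}\), which tends to \(\infty\) because \(Q\Phi(Q)\to0\). Both terms are then \(\le 2(Q\Phi(Q))^{1/2}\). Define \(h(Q)=2(Q\Phi(Q))^{1/2}\), so \(h\to0\), and the time is \(t(Q)=\log Q+\tfrac12\log(1/(Q\Phi(Q)))\).

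Now choose \(f\) with \(f(t(Q))> h(Q)\) for all large \(Q\) and \(f(t)\to0\). One concern is that \(t(Q)\) need not be monotone or continuous in \(Q\) (\(\Phi\) is only nonincreasing). I would handle this by defining
\[
f(t)=\min\Bigl\{1,\ \sup\{4(Q\Phi(Q))^{1/2}:\ Q\ge1,\ t(Q)\le t\le t(Q)+1\}\Bigr\},
\]
with \(f=1\) where the set is empty. Since \(t(Q)\ge\log Q\), the condition \(t(Q)\le t\) implies \(Q\le e^t\). Since \(t\le t(Q)+1\) and \(t(Q)\le\log Q+\tfrac12\log(1/(Q\Phi(Q)))\), either \(Q\) is large or \(Q\Phi(Q)\) is tiny. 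So the relevant \(Q\) satisfy \(Q\Phi(Q)\to0\) as \(t\to\infty\), and hence \(f(t)\to0\).

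This is the step needing care. It is simpler to use \(\log Q\le t(Q)\le t\) for the lower bound and, for smallness, note \(Q\ge\) something tending to infinity. That follows because \(t(Q)\le\log Q+\tfrac12\log(1/(Q\Phi(Q)))\), and \(\Phi\) nonincreasing gives \(Q\Phi(Q)\ge Q\Phi(1)\cdot\)… If this fails, I would replace \(\Phi\) by the larger regularized \(\tilde\Phi(Q)=Q^{-1}\sup_{Q'\ge Q}Q'\Phi(Q')\). This only shrinks the target set, still has \(Q\tilde\Phi(Q)\to0\) with \(Q\tilde\Phi\) nonincreasing, and makes \(t(Q)\) increasing and continuous up to jumps.

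With \(f(t(Q))\ge2h(Q)>h(Q)\), any \(x\in E_{f,w}\) satisfies \(W_x(t(Q))\ge\log f(t(Q))>\log h(Q)\) for large \(Q\). The contrapositive above then gives \(\Delta_x(Q)\ge2\Phi(Q)\). Hence \(E_{f,w}\) is contained in the target set, and \cref{thm:main} gives the lower bound \(s_w\).
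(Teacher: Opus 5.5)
Your proposal is correct and is essentially the paper's proof: it uses the same time $t(Q)=\tfrac12\log\bigl(Q/\Phi(Q)\bigr)$, the same envelope value $2\sqrt{Q\Phi(Q)}$ there, and the same contrapositive bound on $e^{W_x(t(Q))}$ to reduce to \cref{thm:main}. One correction: because $\Phi$ is nonincreasing, $t(Q)$ is in fact strictly increasing, so the paper simply defines the envelope on the range of $t$ (and as $e^{-s}$ elsewhere); your windowed supremum and the $\tilde\Phi$ fallback are valid but unnecessary.
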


\begin{proof}
Choose \(Q_0\ge1\) so large that \(\Phi(Q)<Q\) for \(Q\ge Q_0\),
and put
\[
 t(Q)=\frac12\log\frac{Q}{\Phi(Q)}\qquad(Q\ge Q_0).
\]
Since \(\Phi\) is nonincreasing, \(t(Q)\) is strictly increasing and
tends to infinity.  Define \(h\) on the range of \(t\) by
\[
 h(t(Q))=2\sqrt{Q\Phi(Q)},
\]
and put \(h(s)=e^{-s}\) elsewhere.  Then \(h>0\) and \(h(s)\to0\).
If \(x\in E_{h,w}\), then for every sufficiently large \(Q\), every
\(p\in\Z^2\), and every \(1\le q\le Q\),
\[
 e^{-t(Q)}q\le\sqrt{Q\Phi(Q)}=\frac12h(t(Q)).
\]
The definition of \(E_{h,w}\) therefore forces
\[
 \max_{i=1,2}\abs{qx_i-p_i}^{1/w_i}
 \ge e^{-t(Q)}h(t(Q))=2\Phi(Q).
\]
Thus \(\Delta_x(Q)\ge2\Phi(Q)\) eventually.  The first dimension
formula follows from \cref{thm:main} and \eqref{eq:LSST-dimension}, and
the second follows from the first.  The same argument works with any
fixed factor greater than \(1\).
\end{proof}

To obtain points satisfying a prescribed uniform upper
rate, we now extract one approximant from each return level and then
choose a power schedule adapted to that rate.

\begin{lemma}\label{lem:recorded-approximants}
Let \(F\) be a limit set obtained from the construction of Section~3
with \(\Lambda^\circ=\Z^3\).  For every \(x\in F\) and every
\(n\ge0\), there exists \((p_n,q_n)\in\Z^2\times\N\), with
\(p_n=(p_{n,1},p_{n,2})\), such that
\[
 \frac12e^{t_n}<q_n\le e^{t_n}
\]
and
\[
 \max_{i=1,2}\abs{q_nx_i-p_{n,i}}^{1/w_i}
 \le\eps_n^{1/w_1}e^{-t_{n+1}}.
\]
\end{lemma}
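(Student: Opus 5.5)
The approximant at level \(n\) will be read off from the vertical return vector of the branch center, exactly as in the proof of \cref{lem:singularity-shift}. Fix \(x\in F\), an infinite branch \((\sigma_n)\) with \(x\in\bigcap_n\beta(\sigma_n)\), and put \(\kappa_n=c(\sigma_n)\). For every \(n\ge0\) (the root by the choice of \(\kappa_0\), and \(n\ge1\) by the first legality condition), \(\Lambda_n(\kappa_n)=a_{t_n}u_{\kappa_n}\Z^3\in\cL_3'\). Hence there is a primitive \(z_n=(p_{n,1},p_{n,2},q_n)\in\Z^3\) with
\[
 a_{t_n}u_{\kappa_n}z_n=\ell_3\bigl(\Lambda_n(\kappa_n)\bigr)e_3,
\]
and we may choose the sign so that \(q_n>0\). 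Reading off coordinates gives \(q_n\kappa_{n,i}=p_{n,i}\) for \(i=1,2\), and \(e^{-t_n}q_n=\ell_3(\Lambda_n(\kappa_n))\in(1/2,1]\). This is exactly the denominator bound \(\frac12e^{t_n}<q_n\le e^{t_n}\), and \(q_n\in\N\).

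For the approximation bound, I use \(x\in\beta_n(\kappa_n)\). By \eqref{eq:node-rectangle}, \(\abs{x_i-\kappa_{n,i}}\le\eps_ne^{-w_it_{n+1}-t_n}\), so
\[
 \abs{q_nx_i-p_{n,i}}=q_n\abs{x_i-\kappa_{n,i}}
 \le e^{t_n}\eps_ne^{-w_it_{n+1}-t_n}=\eps_ne^{-w_it_{n+1}}.
\]
Raising this to the power \(1/w_i\) gives \(\abs{q_nx_i-p_{n,i}}^{1/w_i}\le\eps_n^{1/w_i}e^{-t_{n+1}}\).

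The only point needing care is converting \(\eps_n^{1/w_i}\) into the uniform \(\eps_n^{1/w_1}\). Since \(0<\eps_n\le1\) and \(1/w_2>1/w_1\), we have \(\eps_n^{1/w_2}\le\eps_n^{1/w_1}\). Taking the maximum over \(i\) then gives the claim. No step is a real obstacle; the sign choice for \(q_n\) and the exponent comparison are the only details to state explicitly.
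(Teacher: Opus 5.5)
Your proposal is correct and follows the paper's proof essentially verbatim: the denominator comes from the primitive preimage of the vertical vector \(\ell_3(\Lambda_n(\kappa_n))e_3\), and the approximation bound comes from the rectangle \(\beta_n(\kappa_n)\) together with \(\eps_n^{1/w_2}\le\eps_n^{1/w_1}\). One small remark: no sign choice is needed, since the third coordinate \(e^{-t_n}q_n=\ell_3(\Lambda_n(\kappa_n))>0\) already forces \(q_n>0\).
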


\begin{proof}
Choose a branch whose level-\(n\) center \(\kappa_n\) satisfies
\(x\in\beta_n(\kappa_n)\).  Since
\(\Lambda_n(\kappa_n)\in\cL_3'\), there is a primitive
\((p_n,q_n)\in\Z^2\times\N\) such that
\[
 a_{t_n}u_{\kappa_n}(p_n,q_n)
 =\ell_3\bigl(\Lambda_n(\kappa_n)\bigr)e_3.
\]
Hence
\[
 q_n=\ell_3\bigl(\Lambda_n(\kappa_n)\bigr)e^{t_n},
\]
which gives the denominator bounds, and the first two coordinates give
\(p_{n,i}=q_n\kappa_{n,i}\).  Since \(x\in\beta_n(\kappa_n)\),
\[
 \abs{x_i-\kappa_{n,i}}
 \le\eps_ne^{-w_it_{n+1}-t_n},
\]
so
\[
 \abs{q_nx_i-p_{n,i}}
 \le\eps_ne^{-w_it_{n+1}}.
\]
Taking \(1/w_i\)-powers and using \(0<\eps_n\le1\) and \(w_1>w_2\)
proves the claim.
\end{proof}

\begin{lemma}\label{lem:power-schedule}
Let \(0<\nu<1\) and \(\chi>0\), and put
\[
 b_\nu=\frac{1+\nu}{2}.
\]
For every sufficiently large \(t_0\), the construction of Section~3
with \(\Lambda^\circ=\Z^3\) can be initialized with
\[
 d_{n+1}=t_n^{b_\nu},\qquad
 t_{n+1}=t_n+d_{n+1},\qquad
 \eps_n=e^{-\chi t_n^\nu}.
\]
Its limit set \(F\) satisfies
\[
 F\subseteq\Sing_w(2),\qquad \dim_H F=s_w.
\]
\end{lemma}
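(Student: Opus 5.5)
The plan is to mirror the proof of \cref{prop:global-scheduling}: check every hypothesis of \cref{thm:child-count}, \cref{prop:regularity}, \cref{lem:separation}, \cref{lem:singularity-shift} and \cref{prop:tree-dimension} for the power schedule, then read off $F\subseteq\Sing_w(2)$ and $\dim_H F=s_w$ from the last two results with $\Lambda^\circ=\Z^3$.

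\textbf{Root.} Apply \cref{thm:child-count} once to the auxiliary parent $(0,\Z^3)$ with parent scale $\eta=1$ and step $d=t_0$. Since $\Z^3\in\cL_3'\cap\cK_1^*(3)$, this is legitimate. The counting condition \eqref{eq:count-condition} reads
\[
100\bigl(\log2+\chi t_0^\nu\bigr)\le\gamma_wt_0 ,
\]
and non-extinction reads
\[
\log c_-+2t_0-2\chi t_0^\nu\ge0 .
\]
Both hold for large $t_0$ because $\nu<1$. We also need $e^{-\chi t_0^\nu}\le\min\{\eps_*,r/8\}$ and $t_0\ge d_*$, which hold for large $t_0$. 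This produces a center $\kappa_0$ with $a_{t_0}u_{\kappa_0}\Z^3\in\cL_3'\cap\cK_{\eps_0^2}^*(3)$.

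\textbf{Recursive levels.} At level $n\ge1$ we use parent scale $\eps_{n-1}\ge\eps_n$; the monotonicity is clear since $t_n$ increases. We have $d_n=t_{n-1}^{b_\nu}$ and $|\log\eps_n|=\chi t_n^\nu$. Since $t_n=t_{n-1}+t_{n-1}^{b_\nu}\le2t_{n-1}$, we get
\[
|\log\eps_n|\le2^\nu\chi\,t_{n-1}^{\nu}.
\]
Because $\nu<b_\nu$, this is $o(d_n)$, uniformly once $t_0$ is large. Hence $100(\log2+|\log\eps_n|)\le\gamma_wd_n$ and $\log c_-+2d_n-2|\log\eps_n|\ge0$ hold at every level, and $d_n\ge d_*$ and $\eps_n\le\min\{\eps_*,r/8\}$ also hold.

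Next, $d_{n+1}\ge d_n$. This gives the nesting condition \eqref{eq:regularity-nesting}, since $\eps_n\le\eps_{n-1}$ and $\tfrac12+e^{-(1+w_2)d_n}\le1$. It also gives the separation conditions \eqref{eq:separation-conditions}, because $\eps_ne^{w_2(d_n-d_{n+1})}\le\eps_n\le r/8$. Finally, $t_n\to\infty$ because $d_n\ge t_0^{b_\nu}$.

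So \cref{prop:regularity} and \cref{lem:separation} apply, and every level-$(n-1)$ vertex has at least $c_-\eps_n^2e^{2d_n}$ children.

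\textbf{Asymptotic hypotheses.} Clearly $\eps_n\downarrow0$ and $d_n\to\infty$. We have $d_{n+1}/t_n=t_n^{b_\nu-1}\to0$ because $b_\nu<1$. We have already shown $|\log\eps_n|=o(d_n)$. Therefore \cref{lem:singularity-shift} gives $F\subseteq\Sing_w(2)$, and \cref{prop:tree-dimension} gives $\dim_H F=s_w$.

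\textbf{Main obstacle.} The only delicate point is the uniform comparison $t_n^\nu=o(t_{n-1}^{b_\nu})$ across consecutive levels. It is handled by the bound $t_n\le2t_{n-1}$ and by the strict inequalities $\nu<b_\nu<1$. This is exactly why the exponent $b_\nu$ is chosen strictly between $\nu$ and $1$. All thresholds depend only on $w,\nu,\chi$, which fixes how large $t_0$ must be.
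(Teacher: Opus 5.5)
Your proposal is correct and follows the paper's proof essentially step for step. The root comes from one application of the child-count theorem to the auxiliary parent $(0,\Z^3)$ with parent scale $1$; the bound $t_n\le 2t_{n-1}$ gives $|\log\eps_n|\le 2^\nu\chi t_{n-1}^\nu=o(d_n)$ uniformly in $n$ (the paper writes this out as explicit inequalities holding for all $s\ge t_0$); and monotonicity of $d_n$ and $\eps_n$ gives nesting and separation, after which the singularity and tree-dimension results are invoked exactly as you do.
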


\begin{proof}
Let \(\gamma_w=\min\{w_2,w_1-w_2\}\).  Since
\(\nu<b_\nu<1\), choose \(t_0\ge1\) so large that
\[
 t_0\ge d_*,\qquad t_0^{b_\nu}\ge d_*,\qquad
 \eps_0=e^{-\chi t_0^\nu}\le\min\{1,\eps_*\},
\]
\[
 100(\log2+\chi t_0^\nu)\le\gamma_wt_0,
 \qquad
 \log c_-+2t_0-2\chi t_0^\nu\ge0.
\]
Require, in addition, that for every \(s\ge t_0\),
\[
 100\bigl(\log2+2^\nu\chi s^\nu\bigr)
 \le\gamma_ws^{b_\nu},
 \qquad
 \log c_-+2s^{b_\nu}-2^{1+\nu}\chi s^\nu\ge0,
\]
\[
 \frac12+e^{-(1+w_2)s^{b_\nu}}\le1,
 \qquad
 e^{-\chi s^\nu}\le\min\{\eps_*,r/8\}.
\]
These requirements hold for every sufficiently large \(t_0\) because
\(\nu<b_\nu<1\).  Since
\(\Z^3\in\cL_3'\cap\cK_1^*(3)\), \cref{thm:child-count}, applied to
the auxiliary step from time \(0\) to \(t_0\) with parent scale \(1\),
now produces a root center \(\kappa_0\) such that
\[
 a_{t_0}u_{\kappa_0}\Z^3
 \in\cL_3'\cap\cK_{\eps_0^2}^*(3).
\]
At level \(n\ge1\), parent legality gives
\[
 a_{t_{n-1}}u_\kappa\Z^3
 \in\cL_3'\cap\cK_{\eps_{n-1}^2}^*(3),
\]
so \cref{thm:child-count} is used with parent scale
\(\eta=\eps_{n-1}\).  Moreover,
\[
 \frac{t_n}{t_{n-1}}
 =1+t_{n-1}^{b_\nu-1}\le2,
\]
so
\[
 |\log\eps_n|=\chi t_n^\nu
 \le2^\nu\chi t_{n-1}^\nu=o(d_n),
 \qquad d_n=t_{n-1}^{b_\nu}.
\]
The displayed choices therefore imply the child-count and
non-extinction hypotheses at every level.  Since \(d_{n+1}\ge d_n\)
and \(\eps_n\le\eps_{n-1}\), they also imply the nesting and
separation conditions in
\cref{prop:regularity,lem:separation}.  Finally,
\[
 \frac{d_{n+1}}{t_n}=t_n^{b_\nu-1}\longrightarrow0,
 \qquad
 \frac{|\log\eps_n|}{d_n}\longrightarrow0,
\]
and \(d_n\to\infty\), \(\eps_n\to0\).  Hence
\cref{lem:singularity-shift,prop:tree-dimension} give
\(F\subseteq\Sing_w(2)\) and \(\dim_H F=s_w\).
\end{proof}

\begin{proof}[Proof of \cref{thm:subpower-uniform-rate}]
Choose \(\chi>w_1\mu\), and let \(F\) be the limit set supplied by
\cref{lem:power-schedule}.  Fix \(x\in F\).  If
\(e^{t_n}\le Q<e^{t_{n+1}}\), then
\cref{lem:recorded-approximants} gives \(q_n\le Q\) and
\[
 \Delta_x(Q)
 \le Q^{-1}\exp\!\left(-\frac{\chi}{w_1}t_n^\nu\right).
\]
Moreover,
\[
 \frac{t_{n+1}}{t_n}=1+t_n^{b_\nu-1}\longrightarrow1.
\]
Since \(\chi/w_1>\mu\), for all sufficiently large \(n\), uniformly
for \(e^{t_n}\le Q<e^{t_{n+1}}\),
\[
 \frac{\chi}{w_1}t_n^\nu\ge\mu(\log Q)^\nu.
\]
Thus \(F\subseteq\operatorname{UA}_w(\Phi_{\nu,\mu})\), and
\(\dim_H\operatorname{UA}_w(\Phi_{\nu,\mu})\ge s_w\).

Since
\[
 Q\Phi_{\nu,\mu}(Q)
 =\exp\!\left(-\mu(\log Q)^\nu\right)\longrightarrow0,
\]
the inclusion recorded at the beginning of this section gives
\(\operatorname{UA}_w(\Phi_{\nu,\mu})\subseteq\Sing_w(2)\).
Together with \eqref{eq:LSST-dimension}, this gives the reverse
dimension bound.  The dimension of the complement follows from
\cref{cor:uniform-rate-avoidance}.
\end{proof}

\begin{remark}
The endpoint \(\nu=1\) in \cref{thm:subpower-uniform-rate} is not
covered by the present construction.
\end{remark}

\section*{Acknowledgements}

The author thanks his advisor, Yitwah Cheung, for suggesting this
problem, and both Yitwah Cheung and Chengyang Wu for helpful
discussions.

\section*{Declaration of generative AI and AI-assisted technologies\\
in the manuscript preparation process}

During the preparation of this work, the author used ChatGPT and Codex
(OpenAI) to assist with proof auditing, manuscript organization,
bibliographic checks, and language editing.  After using these tools,
the author independently verified all mathematical arguments and
references and reviewed and edited the manuscript as needed.  The
author takes full responsibility for the content of the article.

\end{document}